\documentclass[11pt,reqno,oneside]{amsproc}

\usepackage{amsfonts}
\usepackage{dsfont}

\usepackage{amsmath, amsthm, amssymb, enumerate}
\usepackage{cancel}

\textwidth 16truecm \textheight 8in\oddsidemargin0.2truecm\evensidemargin0.7truecm\voffset-.1truecm

\usepackage{color}  
\usepackage{marginnote}
\usepackage[colorlinks=true, pdfstartview=FitV, linkcolor=black, citecolor=black, urlcolor=black]{hyperref}

  \chardef\forshowkeys=0
  \chardef\refcheck=0
  \chardef\showllabel=0
  \chardef\sketches=0
  \chardef\showcolors=0
%
  
\ifnum\forshowkeys=1
  
  \usepackage[notref,notcite,color]{showkeys}
\fi


\ifnum\showllabel=1
  \def\llabel#1{\marginnote{\color{colorcccc}\rm\small(#1)}[-0.0cm]\notag}
\else
 \def\llabel#1{\notag}
\fi

\ifnum\refcheck=1
  \usepackage{refcheck}
\fi

\setcounter{MaxMatrixCols}{10}

\newtheorem{theorem}{Theorem}[section]
\newtheorem{Theorem}{Theorem}[section]

\newtheorem{Lemma}[theorem]{Lemma}

\theoremstyle{definition}

\newtheorem{Remark}[theorem]{Remark}


\def\MM{\tilde M}

\def\andand{\text{\qquad and \qquad}}
\def\uk{u^{(k)}}
\def\ukm{u^{(k-1)}}

\def\umo{u^{(-1)}}
\def\vk{v^{(k)}}
\def\vj{v^{(j)}}

\def\ujm{u^{(j-1)}}

\def\vj{v^{(j)}}

\def\vz{v^{(0)}}


   \def\LLL#1#2{L_{\omega}^{#1}H_{x}^{#2}}

  \def\LLLit{\LLL{\infty}{1/2}}

   \def\Pas{\indeq\mathbb{P}\text{-a.s.}}

   \def\PP{{\mathbb P}}

   \def\ueps{u^{\epsilon}}

   \def\uu{{{u}}}

   \def\startnewsection#1#2{\section{#1}\label{#2}\setcounter{equation}{0}}   
   \def\NNp{{\mathbb N}}
   \def\NNz{{\mathbb N}_0}
   
    \def\TT{{\mathbb T}}
   \def\WW{{\mathbb W}}
   \def\EE{{\mathbb E}}
   \def\comma{ {\rm ,\quad{}} }            
   \def\commaone{ {\rm ,\quad{}} }         
   \def\fractext#1#2{{#1}/{#2}}

   \def\indeq{\qquad{}}                     

\def\TT{\mathbb T}

\def\tilde{\widetilde}

\def\PP{\mathbb{P}}

\def\div{\mathop{\rm div}\nolimits}

\def\indeq{\quad{}}

\ifnum\showcolors=1
  \definecolor{colorcccc}{rgb}{0.7,0.7,0.7}

  \def\colb{\color{black}}
  \definecolor{colorpppp}{rgb}{0.6,0.0,0.1}
  \definecolor{colorgggg}{rgb}{.0,0.4,0.0}
  \definecolor{colorhhhh}{rgb}{0,0.6,0.2}
  \definecolor{colorgray}{rgb}{0.8,0.8,0.8}

  \definecolor{coloroftheorems}{rgb}{0.6,0.0,0.6}
  \definecolor{colorigor}{rgb}{1, 0.2, 0.8}
  \definecolor{amethyst}{rgb}{0.6, 0.4, 0.8}

  \def\cole{\color{coloroftheorems}}
  \def\cole{}
  
  \definecolor{colororange}{rgb}{0.8,0.2,0}
  \definecolor{colorpurple}{rgb}{0.6,0.0,0.6}
  
\else   
  \definecolor{colorcccc}{rgb}{0,0,0}

  \def\colb{\color{black}}
  \definecolor{colorpppp}{rgb}{0,0,0}
  \definecolor{colorgggg}{rgb}{0,0,0}
  \definecolor{colorhhhh}{rgb}{0,0,0}
  \definecolor{colorgray}{rgb}{0,0,0}

  \definecolor{coloroftheorems}{rgb}{0,0,0}
  \definecolor{colorigor}{rgb}{0,0,0}
  \definecolor{amethyst}{rgb}{0,0,0}
  
  \def\cole{\color{coloroftheorems}}
  
  \definecolor{colororange}{rgb}{0.8,0.2,0}
  \definecolor{colorpurple}{rgb}{0.6,0.0,0.6}
  
\fi

  \def\bea{\begin{align}}
  \def\ena{\end{align}}

\def\bega{\begin{aligned}}
  \def\enda{\end{aligned}}

\def\bcase{\begin{cases}}
  \def\ecase{\end{cases}}

\def\bmx{\begin{bmatrix}}
  \def\emx{\end{bmatrix}}

\def\cf{\mathcal{F}}

\def\uu{{{u}}}
\def\WW{ W}

\def\NNp{{\mathbb N}}

\def\lec{\lesssim}

\begin{document}
\baselineskip=12.6pt

$\,$
\vskip1.2truecm
\title[The stochastic Navier-Stokes equations with small $H^{1/2}$ data]{Almost global existence for the stochastic Navier-Stokes equations with small $H^{1/2}$ data}

\author[M.~Aydin]{Mustafa Sencer Ayd\i n}
\address{Department of Mathematics, University of Southern California, Los Angeles, CA 90089}
\email{maydin@usc.edu}

\author[I.~Kukavica]{Igor Kukavica}
\address{Department of Mathematics, University of Southern California, Los Angeles, CA 90089}
\email{kukavica@usc.edu}

\author[F.H.~Xu]{Fanhui Xu}
\address{Department of Mathematics, Union College, Schenectady, NY 12308}
\email{xuf2@union.edu}

\begin{abstract}

We address the global existence of solutions to the stochastic Navier-Stokes equations with multiplicative noise and with initial data in $H^{1/2}(\mathbb{T}^{3})$.  We prove that the solution exists globally in time with probability arbitrarily close to~$1$ if the initial data and noise are sufficiently small.  If the noise is not assumed to be small, then the solution is global on a sufficiently small deterministic time interval with probability arbitrarily close to~$1$.

\hfill 

\today
\end{abstract}

\maketitle

\date{}

\startnewsection{Introduction}{sec01}
Our goal is to address the global existence of solutions to the initial value problem for the stochastic Navier-Stokes equations (SNSE) \begin{align} \begin{split} &\partial_t u - \Delta u + \mathcal{P}((u\cdot\nabla) u) = \sigma(t,u) \dot{W}(t), \\ &\nabla\cdot u = 0,\\ &u(0)=u_0, \end{split} \label{EQ01} \end{align} with small $H^{1/2}$ initial data, where $\mathcal{P}$ denotes the Leray projector and $\sigma(t, u)\dot{W}(t)$ stands for a cylindrical multiplicative It\^o noise.  The SNSE are posed on~$\mathbb{T}^3$, i.e., we consider periodic, zero-average initial data.

The local and global well-posedness of solutions for the \emph{deterministic} Navier-Stokes equations is a classical problem, and there have been many results on local and global existence in critical and subcritical spaces. Note that the Sobolev spaces $W^{k,p}$ are considered critical (resp.~subcritical) if $p(k+1)=3$ (resp.~$p(k+1)>3$).  The local and global existence of solutions in $H^{k}$ spaces for $k\geq 1/2$ has been obtained by Fujita and Kato in~\cite{FK}.  In particular, for $H^{1/2}$ initial data, they established the local existence of solutions for arbitrary (divergence-free) data and the global existence for sufficiently small data.  Additionally, for $L^{p}$ initial data of arbitrary size, when $p\geq3$, Kato in \cite{K} obtained the local existence of solutions, with the global existence for small data.  There are many other results in critical and subcritical spaces; see, for instance,~\cite{CF,FJR,KT,T}.

On the other hand, the local existence in critical and subcritical spaces for the SNSE with multiplicative noise is not as well understood.  Initially, the local existence of solutions was addressed in Bensoussan and Temam (see~\cite{BeT}) with additive noise and in \cite{FRS} with multiplicative noise.  The works \cite{Kr,MR} considered the initial value problem in high regularity Hilbert spaces. The regularity of initial data was then further reduced by Glatt-Holtz and Ziane, who in~\cite{GZ} constructed a maximal strong solution for the equation in 3D bounded domains assuming $H^1$ regularity of the initial data.  In~\cite{Ki}, Kim obtained the local existence of solutions for the 3D SNSE with initial data in $H^{s}$, for $s>1/2$, and the global existence with a large probability for small data.  We also note the results in \cite{AV} by Agresti and Veraar, who obtained the local existence in a range of critical and subcricital Besov spaces.  For an almost global result with a special type of convolution noise and with a vorticity formulation, see~\cite{BR}, while for some other results, see~\cite{BT,BCF,BF,BTr,CC,CFH,DZ,F,FRS,FS,GV,KV,MeS,MoS,R,ZBL} to mention only a few references.

The present paper addresses the global existence with a large probability for the initial value problem with small initial data in~$H^{1/2}$, along with a form of energy inequality.  The main difficulty in the proof lies in the construction of the solution.  When the SNSE is driven by an additive noise, the solutions may be constructed by decomposing it into the stochastic part (an Uhlenbeck-Ornstein-type equation) and the difference, which satisfies a pathwise NSE and can thus be treated using deterministic methods.  In the presence of multiplicative noise, there are issues with convergence, where the use of Poincar\'e inequalities necessitates the subcritical rather than critical initial data.  For this reason, we have to avoid using an approximation procedure and instead employ the idea of infinite decomposition of the data, as introduced by the second and third authors in~\cite{KX2}, when treating the SNSE with small initial data in~$L^{3}$.  Compared to the papers \cite{KX1,KX2,KWX}, the Hilbert space setting allows us to introduce a number of simplifications and the relaxation in the assumptions on the noise. At the same time, since we are taking derivatives, it is more challenging to control the advective term.

One of our main results addresses the almost global existence for data $u_0$ uniformly small in~$H^{1/2}$.  The main idea is to decompose $u_0$ into a series $ u_{0} = v_{0}^{(0)} + v_{0}^{(1)} + v_{0}^{(2)} + \cdots $, which converges in~$H^{1/2}$.  Each term in this decomposition is small in $H^{1/2}$ but possesses $H^{1/2+\delta}$ regularity, where $\delta>0$ is a fixed parameter that can be chosen arbitrarily small.  To achieve this, we use a sequence of cutoffs structurally different than those in \cite{KX1,KX2,KWX}, exploiting the regularizing effect of the dissipation term on $L^2$-based spaces.  This requires care when taking differences and establishing the exponential rate of convergence of the approximations to the solution toward the fixed point.

With $u_0$ decomposed as above and the new cutoffs, we find a unique solution $v^{(k)}$ for a series of equations (see \eqref{EQ32} below), with the initial data $v_0^{(k)}$, such that the sum $ \uk = \vz + \cdots + \vk$ solves the proposed SNSE with the initial data $v_{0}^{(0)} + \cdots + v_{0}^{(k)}$.  Thus, in the limit $k\to\infty$, we expect the solutions $u^{(k)}$ to converge to a solution $u$ of the original SNSE problem.  The critical steps in obtaining the limit are the local persistence of regularity and the nondegeneracy of the limit of the stopping times.

Considering \eqref{EQ32}, we solve for $v^{(k)}$ in $H^{\frac{1}{2}+\delta}$ and establish global-in-time bounds that may grow in $H^{\frac{1}{2}+\delta}$ but stay sufficiently small in $H^{\frac{1}{2}}$ as we sum over~$k$.  Solving \eqref{EQ32} with slightly subcritical initial data necessitates more assumptions on the noise coefficient, such as \eqref{EQ04} with $\alpha = \delta$.  Since \eqref{EQ32} is a SNSE type system with forcing, solving it in $H^\frac{1}{2}$ is as challenging as our original problem.  Therefore, we have to consider each piece $v^{(k)}_0$ of the original initial datum in $H^{\frac{1}{2}+\delta}$.  However, we may still replace the subcritical smallness assumption on the noise coefficient with a critical smallness and a subcritical boundedness assumption; see Remark~\ref{R04}.

We would also like to point out the main feature of our second main result.  Namely, its proof and Remark~\ref{R01} show how to obtain the global existence with large probability on any interval $[0,T]$ without requiring the noise coefficient to be small.

The paper is structured as follows. In Section~\ref{sec02}, we introduce the setting and state the main results, Theorem~\ref{T01}, on almost global solutions with small data and small noise, and Theorem~\ref{T02}, providing a local solution with small data without the smallness assumption on the noise.  In Section~\ref{sec.heat}, we introduce the main auxiliary result, Lemma~\ref{L01}, on the local solution for the stochastic heat equation.  Initial steps for the proofs are provided in Section~\ref{sec04}. In particular, Lemma~\ref{L05} contains the decomposition of the initial data, the main inductive construction, and the pointwise energy estimates needed for global energy control.  The proofs are concluded in Section~\ref{sec.thm}.

\startnewsection{Preliminaries and the main results}{sec02}

We recall that, when $u$ is sufficiently smooth, the Leray projector $\mathcal{P}$ on $\mathbb{T}^3$ is given by
\begin{align}
( \mathcal{P} \uu)_j( x)=\sum_{k=1}^{3}( \delta_{jk}+R_j R_k) \uu_k( x)\comma j=1,2,3,
\llabel{EQ02}
\end{align} 
with $R_j$'s representing the Riesz transforms. We fix
$\delta>0$,
which can be arbitrarily small. 
Consider a cylindrical multiplicative noise,
denoted by $\sigma$, for which we assume that
\begin{align}
	& \Vert\sigma(t, u_1)-\sigma(t, u_2)\Vert_{\mathbb{H}^{\frac12 +\alpha}} 
	 \le \epsilon_{\sigma}\Vert  u_1-u_2\Vert_{H^{\frac12 +\alpha}}
	  \comma t\ge 0
	  \commaone u_1, u_2\in H^{\frac12 +\alpha}(\TT^3)
	  \commaone
	   \alpha=0,\delta,\label{EQ04}
	\\&
	\sigma(t,0)=0 \comma t\ge 0
	\label{EQ05}
	,
  \end{align}
where $\epsilon_{\sigma}>0$, and
$\mathbb{H}^{\frac12 +\alpha}$-norm is defined in~\eqref{EQ09}
below.
As stated in the introduction, the smallness assumption in \eqref{EQ04} for $\alpha =
\delta$ is not essential and can be relaxed; see Remark~\ref{R04} below.
Without loss of generality, we assume that $\div (\sigma(t, u))=0$
and $\int_{\mathbb{T}^{3}}\sigma(t,u)=0$
for $t\geq 0$ when
$\div u=0$
and $\int_{\mathbb{T}^{3}}u=0$.
Otherwise, we may absorb $(I-\mathcal{P})\sigma$ into the pressure term, and remove it using the
Leray
projector~$\mathcal{P}$.
In addition, we require that $\int_{\TT^3} \sigma (t, u)\, dx = 0$ for
every~$t\geq0$ when
$\div u=0$ and
$\int_{\TT^3} u\, dx = 0$. 
We note that our assumptions allow for a linear-structured noise such as  
  \begin{align}
     \sigma(t, u) = \frac{1}{C}\epsilon_{\sigma}\mathcal{P}_0(\phi*P(t,u))
  ,\llabel{EQ06}
\end{align}
where $\phi \in C^\infty(\TT^3)$, $P(t, \cdot)$ is an operator for which \eqref{EQ04} and~\eqref{EQ05} hold, $C>0$ is a sufficiently large constant, and
$\mathcal{P}_0$ is the average-free Leray projection. Namely, $\mathcal{P}_0 f = \mathcal{P}\left( f- \left(\int f\right/|\mathbb{T}^3|)\right)$. 

We fix a stochastic basis $(\Omega, \mathcal{F},(\mathcal{F}_t)_{t\geq 0},\mathbb{P})$
that satisfies the standard assumptions, and denote by $\mathcal{H}$ and $\mathcal{Y}$ separable Hilbert spaces.
We consider $\{\mathbf{e}_k\}_{k\geq 1}$, a complete orthonormal basis of~$\mathcal{H}$, and note that
$\WW( t,\omega):=\sum_{k\geq 1} W_k( t,\omega) \mathbf{e}_k$ yields a cylindrical Wiener process over~$\mathcal{H}$,
where $\{W_k: k\in\NNp\}$ is a family of independent Brownian motions in $(\Omega, \mathcal{F},(\mathcal{F}_t)_{t\geq 0},\mathbb{P})$.
We write $l^2( \mathcal{H},\mathcal{Y})$ for
the space of Hilbert-Schmidt operators with the norm 
\begin{align}
\Vert G\Vert_{l^2( \mathcal{H},\mathcal{Y})}^2:= \sum_{k=1}^{\dim \mathcal{H}} | G \mathbf{e}_k|_{\mathcal{Y}}^2.
\llabel{EQ07}
\end{align}
In this setting, the Burkholder-Davis-Gundy inequality reads
\begin{align}
\EE \biggl[ \sup_{s\in(0,t]}\biggl| \int_0^s G \,d\WW_r \biggr|_{\mathcal{Y}}\biggr]
\leq 
\EE\biggl[ \left(\int_0^t \Vert G\Vert^2_{ l^2( \mathcal{H},\mathcal{Y})}\, dr \right)^{1/2}\biggr].
\llabel{EQ10}
\end{align} 

For a real number $\alpha$ and the Sobolev space $H^{\alpha}=H^{\alpha}(\mathbb{T}^3)$, we write
  \begin{align}
   \mathbb{H}^{\alpha}=\left\{f\colon\TT^3\to l^2( \mathcal{H},\mathcal{Y})
   : 
  f  \mathbf{e}_k\in H^{\alpha}(\TT^3), k\in \mathbb{N}, \text{ and } \int_{\TT^3} \bigl\Vert \bigl((1+|n|^2)^{\alpha/2}\hat{f}\bigr)^{\vee}\bigr\Vert_{l^2( \mathcal{H},\mathcal{Y})}^2 \,dx<\infty
  \right\}
  \llabel{EQ08}
  \end{align}
for the Sobolev norms on noise coefficients. We note that 
$\left(\mathbb{H}^{\alpha},\Vert \cdot\Vert_{\mathbb{H}^{\alpha}}\right)$ is a Hilbert space with the norm
\begin{align}
\Vert f\Vert_{\mathbb{H}^{\alpha}}:=\left( \int_{\TT^3} \big\Vert \big((1+|n|^2)^{\alpha/2}\hat{f}\big)^\vee\big\Vert_{l^2( \mathcal{H},\mathcal{Y})}^2 \,dx\right)^{\frac{1}{2}}. 
\label{EQ09}
\end{align}
We write $\mathbb{L}^{2}$ instead of~$\mathbb{H}^{0}$ and set
$( \mathcal{P}f) \mathbf{e}_k=\mathcal{P} (f \mathbf{e}_k)$.
It follows that
$\mathcal{P}f\in \mathbb{H}^{\alpha}$ for $ f\in \mathbb{H}^{\alpha}$.
We also use $C>0$ to denote a sufficiently large constant
that may change from line to line.

Now, we are ready to state our main result.

\cole
\begin{Theorem}[Global solution with small noise and initial data]
\label{T01}
Let $\uu_0\in L^\infty(\Omega; H^{1/2}(\TT^3))$ be such that
$\nabla\cdot u_0=0$ and $\int_{\TT^3} u_0=0$.
Suppose that the assumptions~\eqref{EQ04} and \eqref{EQ05} hold,
where $\epsilon_{\sigma}\in(0,1]$ is sufficiently small.
For every $p_0\in(0,1]$, there exists $\epsilon_0\in(0,1]$
such that if 
  \begin{align}
    \sup_{\Omega}\Vert u_0\Vert_{H^\frac12}\leq \epsilon_0
    ,
   \label{EQ11}
  \end{align}
then there is
a stopping time
$\tau\in(0,\infty]$ and a unique solution $(u, \tau)$
of 
\eqref{EQ01} on $(\Omega, \mathcal{F},(\mathcal{F}_t)_{t\geq 0},\mathbb{P})$,
with the initial condition
$u_0$, such that
  \begin{align}
  \begin{split}
    \EE\biggl[
    \sup_{0\leq t\leq \tau}
    \Vert\uu(t,\cdot)\Vert_{H^{\frac12}}^2
    +\int_0^{\tau} 
     \Vert\uu(t,\cdot)\Vert_{H^{\frac{3}{2}}}^2 \,dt
    \biggr]
    \leq 
    C\epsilon_0^{2}
  \end{split}
   \label{EQ12}
  \end{align}
and
  \begin{align}
   \mathbb{P}[\tau<\infty]
   \leq p_0.
   \llabel{EQ13}
  \end{align}
\end{Theorem}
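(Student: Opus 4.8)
The plan is to realize the heuristic outlined in the introduction: decompose the initial datum into an infinite series of pieces that are uniformly small in $H^{1/2}$ but each slightly more regular, solve a coupled hierarchy of stochastic equations piece by piece, and sum up. First I would fix the decomposition $u_0 = \sum_{k\geq 0} v_0^{(k)}$ from Lemma~\ref{L05}, where $v_0^{(k)}$ lies in $H^{1/2+\delta}$ with $\Vert v_0^{(k)}\Vert_{H^{1/2}}$ decaying geometrically (say like $\epsilon_0 2^{-k}$ up to constants) while $\Vert v_0^{(k)}\Vert_{H^{1/2+\delta}}$ is allowed to grow. I would then set up the hierarchy \eqref{EQ32} inductively: given $u^{(k-1)} = v^{(0)}+\cdots+v^{(k-1)}$ already constructed on a stopping time $\tau_{k-1}$, the next piece $v^{(k)}$ solves a SNSE-type equation with forcing terms built from $u^{(k-1)}$ and with initial datum $v_0^{(k)}$; by Lemma~\ref{L01} and a fixed-point argument in a space of $H^{1/2+\delta}$-valued adapted processes (using assumption~\eqref{EQ04} with $\alpha=\delta$ to close the noise estimate in the slightly subcritical space), one obtains a unique local solution $v^{(k)}$ on some stopping time, and $u^{(k)} := u^{(k-1)}+v^{(k)}$ then solves \eqref{EQ01} with datum $v_0^{(0)}+\cdots+v_0^{(k)}$ up to a common stopping time $\tau_k \le \tau_{k-1}$.

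The heart of the argument is the pair of quantitative estimates that make the sum converge. Using the pointwise/pathwise energy inequalities of Lemma~\ref{L05} together with the Burkholder--Davis--Gundy inequality and the smallness of $\epsilon_\sigma$ and $\epsilon_0$, I would show by induction that $u^{(k)}$ satisfies a bound of the form $\EE[\sup_{t\le\tau_k}\Vert u^{(k)}\Vert_{H^{1/2}}^2 + \int_0^{\tau_k}\Vert u^{(k)}\Vert_{H^{3/2}}^2\,dt] \le C\epsilon_0^2$ uniformly in $k$, with the increments $\Vert v^{(k)}\Vert$ controlled geometrically in the same norm; the advective term $\mathcal{P}((u\cdot\nabla)u)$ is handled by the standard product/interpolation estimate $\Vert (f\cdot\nabla)g\Vert_{H^{-1/2}}\lesssim \Vert f\Vert_{H^{1/2}}\Vert g\Vert_{H^{3/2}}$ (or its bilinear symmetrization), which is exactly critical for $H^{1/2}$ data and is why the smallness $\epsilon_0$ enters. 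Simultaneously I would track the $H^{1/2+\delta}$ norms of the $v^{(k)}$: these may grow with $k$, but only at a rate slower than the geometric gain in the $H^{1/2}$ norm, which is what guarantees that the persistence-of-regularity argument for each individual piece does not force the stopping times to collapse.

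I would then define $\tau := \lim_{k\to\infty}\tau_k$ (a decreasing limit of stopping times, hence a stopping time) and $u := \lim_k u^{(k)}$, with convergence in $L^2(\Omega; C([0,\tau];H^{1/2})\cap L^2([0,\tau];H^{3/2}))$ coming from the geometric bound on the increments; passing to the limit in the weak formulation of \eqref{EQ32} summed over $k$ yields that $(u,\tau)$ solves \eqref{EQ01} with datum $u_0$, and the uniform bound passes to the limit to give \eqref{EQ12}. For the probability estimate $\mathbb{P}[\tau<\infty]\le p_0$, I would note that on the event $\{\tau<\infty\}$ the natural blow-up/exit criterion forces $\sup_{t\le\tau}\Vert u\Vert_{H^{1/2}}$ (or a suitable localized quantity) to exceed a fixed threshold $\eta$ depending only on the structure constants; Chebyshev applied to \eqref{EQ12} then gives $\mathbb{P}[\tau<\infty]\le C\epsilon_0^2/\eta^2$, and choosing $\epsilon_0$ small (depending on $p_0$) makes this $\le p_0$. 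The main obstacle I anticipate is precisely the \emph{nondegeneracy of $\tau$}: one must rule out $\tau_k\to 0$, which requires the delicate interplay between the geometric decay of the $H^{1/2}$ increments and the controlled growth of the $H^{1/2+\delta}$ norms, carried out with care in the differences so that the fixed-point radius for the $k$-th equation does not shrink to zero — this is the step flagged in the introduction as requiring "care when taking differences and establishing the exponential rate of convergence."
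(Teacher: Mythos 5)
Your overall architecture --- the decomposition of Lemma~\ref{L05}, the hierarchy \eqref{EQ32}, geometric control of the increments in $H^{1/2}$ alongside growing $H^{1/2+\delta}$ bounds, and summation --- is the paper's. But there are two concrete gaps, both centered on the step you yourself flag as the main obstacle and leave unresolved.

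First, the probability estimate. You propose a single Chebyshev bound: on $\{\tau<\infty\}$ some norm of the limit $u$ exceeds a fixed threshold $\eta$, so \eqref{EQ12} gives $\mathbb{P}[\tau<\infty]\le C\epsilon_0^2/\eta^2$. This does not work as stated. In the critical space $H^{1/2}$ there is no exit criterion at a fixed threshold for the limit solution: the local existence time is not controlled by the critical norm of the data, so $\tau<\infty$ does not force $\sup_t\Vert u\Vert_{H^{1/2}}\ge\eta$. The paper's $\tau$ is instead the infimum \eqref{EQ105}--\eqref{EQ106} of \emph{per-piece} exit times: the time when $Q_{k,0}$ reaches the $k$-dependent threshold $\bar\epsilon/2^k$, or $Q_{k,\delta}$ reaches $M_k$. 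The estimate is then a union bound over $k$ of Markov inequalities (Lemma~\ref{L10}, \eqref{EQ110}--\eqref{EQ111}), and it closes only because the data decay like $\epsilon_0/4^k$ while the thresholds decay like $\bar\epsilon/2^k$, making the ratio summable; the subcritical exits are controlled separately by choosing $M_k$ large relative to $\MM_k$ via Lemma~\ref{LM04}. Your single-threshold argument collapses this two-parameter, per-piece structure and loses exactly the mechanism that makes the bound possible.

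Second, the construction itself. You solve each $v^{(k)}$ only locally on a shrinking stopping time $\tau_k\le\tau_{k-1}$ and must then prove nondegeneracy of the limit directly --- which you acknowledge but do not do. The paper avoids this entirely by inserting the cutoffs \eqref{EQ33}--\eqref{EQ34} into the equation (system \eqref{EQ35}/\eqref{EQ85}), solving the truncated problems \emph{globally} on $[0,T]$ for every $T$ (Lemmas~\ref{LM01}--\ref{LM02}), and only afterwards defining the stopping times as exit times from the cutoff regions; before $\tau$ the cutoffs equal $1$ and the truncated solutions solve \eqref{EQ32}. Relatedly, closing the induction on $u^{(k-1)}$ requires the \emph{pathwise}, uniform-over-$\Omega$ smallness \eqref{EQ86}, which is supplied by the pointwise control of Lemma~\ref{L09} (a consequence of the cutoff structure, cf.~\eqref{EQ102}), not by the expectation bounds you propose to propagate; an $L^2(\Omega)$ bound on $u^{(k-1)}$ is not enough to run the energy estimates of Lemma~\ref{L07} for the next piece. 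Without the cutoffs, or some substitute delivering both the global solvability of each truncated piece and the almost-sure smallness \eqref{EQ86}, the induction and the positivity of $\tau$ (Lemma~\ref{L08}) do not close.
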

\colb

We note that our solution is probabilistically strong. Specifically, 
we consider $u$, a progressively measurable, divergence-free process in $L^2(\Omega; C([0,\tau], H^{1/2}))$,
and $\tau$, a stopping time on
$(\Omega, \cf, (\cf_t)_{t\geq 0}, \PP)$.
We say that $(\uu,\tau)$ solves \eqref{EQ01} if
  \begin{align}
   \begin{split}
     (u_j( t \wedge \tau),\phi)
     &=
     (u_{j, 0},\phi)
   + \int_{0}^{t\wedge\tau}
         (u_j (r), \Delta\phi )
     \,dr
   \\&\indeq
   + \int_{0}^{t\wedge\tau}
      \bigl(\bigl(\mathcal{P}  (u_m(r) u(r))\bigr)_j ,\partial_{m} \phi\bigr)
     \,dr
   \\&\indeq
     +\int_0^{t\wedge\tau} \bigl(\sigma_j(r, \uu(r)),\phi\bigr)\,dW(r)\quad
     \PP\mbox{-a.s.}
    \comma j=1,2,3
    ,
   \end{split}
   \label{EQ14}
  \end{align}
for all $\phi\in C^{\infty}(\TT^3)$ and $t\in [0,\infty)$,
where $(f,g)$ denotes the $L^2(\mathbb{T}^3)$ inner product of $f$ and $g$. 
\colb

The conclusions of Theorem~\ref{T01} still hold when $\uu_0\in L^1(\Omega; H^\frac12 (\TT^3))$,
allowing $\tau\in[0,\infty]$. 
By Markov's inequality, if the $L^1(\Omega; H^\frac12 (\TT^3))$-norm
of $\uu_0$ is sufficiently small, there exists a sufficiently large subspace $\Omega_1\subset \Omega$ such that $u_0 \mathds{1}_{\Omega_1}$ satisfies \eqref{EQ11} for a sufficiently small $\epsilon_0\in(0,1]$. We can then apply the current version of Theorem~\ref{T01} and conclude the existence of a unique solution $(u, \tau)$ to~\eqref{EQ01} with the initial datum $u_0 \mathds{1}_{\Omega_1}$. Finally, by setting $\tau=0$ and $u(0)=u_0$ on~$\Omega_1^{\text{c}}$, we deduce that $(u, \tau)$ solves~\eqref{EQ01} with the initial datum~$\uu_0$.

Removing the assumption that $\epsilon_{\sigma}>0$ is small, 
we establish the local existence and
pathwise uniqueness of the solution to~\eqref{EQ01}.
 
\cole
\begin{Theorem}[Local solution with small initial data]
\label{T02}
Let $T>0$, and assume that
$\uu_0\in L^\infty(\Omega; H^\frac12 (\TT^3))$
satisfies $\nabla\cdot u_0=0$ and $\int_{\TT^3} u_0=0$. Suppose that the assumptions~\eqref{EQ04} and \eqref{EQ05} hold, where~$\epsilon_{\sigma}>0$.
Then, there exists 
$\epsilon_0\in(0,1]$
such that if 
  \begin{align}
   \sup_{\Omega} \Vert u_0\Vert_{H^\frac12}\leq \epsilon_0
    ,
   \llabel{EQ15}
  \end{align}
there is
a stopping time
$\tau>0$~a.s.\ and a unique solution $(u,\tau)$
of 
\eqref{EQ01} on $(\Omega, \mathcal{F},(\mathcal{F}_t)_{t\in[0,T]},\mathbb{P})$, with the initial condition
$u_0$, such that
  \begin{align}
  \begin{split}
    \EE\biggl[
    \sup_{0\leq t\leq \tau}\Vert\uu(t,\cdot)\Vert_{H^\frac12}^2
    +\int_0^{\tau} 
    \Vert\uu(t,\cdot)\Vert_{H^{\frac{3}{2}}}^2 \,dt
    \biggr]
    \leq 
    C\epsilon_0^{2}
    ,
  \end{split}
   \label{EQ16}
  \end{align}
for some positive constant~$C$.
\end{Theorem}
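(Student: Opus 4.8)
\textbf{Outline of the proof of Theorem~\ref{T02}.} The plan is to run the same infinite‑decomposition construction used for Theorem~\ref{T01}, but to stop at the local stage: since $\epsilon_\sigma$ is no longer assumed small, the slightly‑subcritical ($H^{1/2+\delta}$) bounds on the pieces cannot be propagated for all time, so the fixed‑point argument is carried out only up to a positive stopping time, on which the size of $\epsilon_\sigma$ is absorbed by the shortness of the elapsed time. As a first step I would invoke Lemma~\ref{L05} to decompose $u_0=\sum_{k\ge0}v_0^{(k)}$, a series converging in $H^{1/2}(\TT^3)$, with $\sum_{k\ge0}\sup_\Omega\|v_0^{(k)}\|_{H^{1/2}}\lesssim\epsilon_0$ and with each $v_0^{(k)}\in H^{1/2+\delta}(\TT^3)$; this decomposition and the inductive construction it furnishes do not use the smallness of $\epsilon_\sigma$, which is the only place that hypothesis enters in Theorem~\ref{T01}.

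Next, inductively in $k$, I would solve the system~\eqref{EQ32} for $\vk$ with initial datum $v_0^{(k)}$, seeking $\vk$ in $L^2(\Omega;C([0,\tau_k],H^{1/2+\delta}))\cap L^2(\Omega;L^2((0,\tau_k),H^{3/2+\delta}))$ via a contraction mapping: Lemma~\ref{L01} handles the stochastic‑heat part, while the Duhamel formula together with the regularizing effect of the heat semigroup on $L^2$‑based Sobolev spaces handles the forcing (built out of $\ukm$) and the advective nonlinearity. The crucial estimate is the bilinear bound for $\mathcal{P}\,\div(f\otimes g)$ in $H^{-1/2+\delta}$ by $\|f\|_{H^{1/2+\delta}}\|g\|_{H^{1/2+\delta}}$, coming from the Sobolev product inequalities on $\TT^3$, which after convolution against the semigroup yields a self‑map and a contraction provided the stopping time $\tau_k$ (bounded below by a deterministic time depending on $T$, $\delta$, $\epsilon_\sigma$, and the previously constructed $\ukm$) is taken small enough; here assumption~\eqref{EQ04} with $\alpha=\delta$ is used for the noise term, as flagged before Remark~\ref{R04}. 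Setting $\uk=\vz+\cdots+\vk$, one verifies as in Theorem~\ref{T01} that $(\uk,\tau_k)$ solves~\eqref{EQ01} with initial datum $v_0^{(0)}+\cdots+v_0^{(k)}$. The main difficulty I anticipate is precisely this advective step: carrying one derivative more than in the $L^3$ theory of~\cite{KX2}, one must control the nonlinearity and its differences while keeping the contraction constants uniform in $k$.

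Then I would invoke the pointwise energy estimates of Lemma~\ref{L05} to obtain, for each $k$, $\EE[\sup_{t\le\tau_k}\|\uk(t)\|_{H^{1/2}}^2+\int_0^{\tau_k}\|\uk(t)\|_{H^{3/2}}^2\,dt]\le C\epsilon_0^2$, the contributions of the individual summands $\vk$ forming a convergent (essentially geometric) series by the smallness built into the decomposition. The stopping time $\tau$ is then defined as the (decreasing) limit of the $\tau_k$, or via suitable thresholds on the $\uk$, and shown to satisfy $\tau>0$ a.s.; as in the proof of Theorem~\ref{T01}, this nondegeneracy of the limit of the stopping times is a delicate point, handled by controlling, summably in $k$, the probability that $\tau_k$ falls below a fixed positive level. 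Passing to the limit $k\to\infty$, the series $u=\sum_k\vk$ converges in $L^2(\Omega;C([0,\tau],H^{1/2}))\cap L^2(\Omega;L^2((0,\tau),H^{3/2}))$, and testing~\eqref{EQ14} against $\phi\in C^\infty(\TT^3)$ and passing to the limit term by term — dominated convergence for the drift terms, the Burkholder–Davis–Gundy inequality for the stochastic integral — shows that $(u,\tau)$ is a probabilistically strong solution of~\eqref{EQ01} satisfying~\eqref{EQ16}. Running the same argument with the refinement of Remark~\ref{R01} in fact allows one to take $\tau=T$ with probability arbitrarily close to~$1$.

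Finally, for pathwise uniqueness I would take two solutions $(u,\tau)$ and $(\tilde u,\tilde\tau)$ with the same datum, write the equation for $w=u-\tilde u$ with nonlinearity $(w\cdot\nabla)u+(\tilde u\cdot\nabla)w$, and apply the It\^{o} formula to $\|w(t)\|_{H^{1/2}}^2$: the dissipation produces $-2\|w\|_{H^{3/2}}^2$, the advective terms are estimated by $C\|w\|_{H^{1/2}}^2(\|u\|_{H^{3/2}}+\|\tilde u\|_{H^{3/2}})$ (up to a term absorbed into the dissipation) using the product embedding $H^{1/2}\cdot H^{1/2}\hookrightarrow H^{-1/2}$ on $\TT^3$, and the It\^{o} correction $\|\sigma(t,u)-\sigma(t,\tilde u)\|_{\mathbb{H}^{1/2}}^2\le\epsilon_\sigma^2\|w\|_{H^{1/2}}^2$ by~\eqref{EQ04} with $\alpha=0$. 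Localizing with $\sigma_R=\inf\{t:\|u(t)\|_{H^{1/2}}+\|\tilde u(t)\|_{H^{1/2}}\ge R\}$, taking expectations (the localized stochastic integral having mean zero, with the Burkholder–Davis–Gundy inequality controlling the supremum), and applying Gronwall on $[0,\tau\wedge\tilde\tau\wedge\sigma_R]$ with the integrable weight $\|u(t)\|_{H^{3/2}}+\|\tilde u(t)\|_{H^{3/2}}$ gives $w\equiv0$ there; letting $R\to\infty$ concludes.
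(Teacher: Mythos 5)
Your overall architecture matches the paper's: decompose $u_0$ by Lemma~\ref{L05}, solve \eqref{EQ32} inductively for $\vk$ in $H^{1/2+\delta}$, control the partial sums pointwise, define $\tau$ through thresholds, show $\tau>0$ a.s.\ by a summable-in-$k$ probability estimate, pass to the limit, and prove uniqueness by an energy argument for the difference. However, there is one step where your proposal, as literally written, would fail, and it is precisely the point the paper's construction is designed to circumvent. You propose to obtain $\vk$ by a Duhamel/contraction argument on a time interval ``bounded below by a deterministic time depending on \dots the previously constructed $\ukm$'' (hence, implicitly, on $\Vert v_0^{(k)}\Vert_{H^{1/2+\delta}}=\MM_k$). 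Since the decomposition only controls the pieces in $H^{1/2}$, the subcritical norms $\MM_k$ are in general unbounded in $k$, so these deterministic contraction times shrink to $0$ and their infimum is $0$ surely; no subsequent probabilistic argument can repair a deterministic degeneration. The paper instead inserts the cutoffs \eqref{EQ33}--\eqref{EQ34} (which include the dissipation integral) into the equation, proves in Lemma~\ref{LM02} that the \emph{truncated} system is globally solvable on all of $[0,T]$ regardless of the size of the data, and only then defines the stopping times as the first times the norms of $\vk$ hit the thresholds $\bar\epsilon/2^{k}$ and $M_k$; Markov's inequality applied to the energy estimates then gives $\PP(\tau^k<t_0)\lesssim t_0$ uniformly in $k$ (Lemma~\ref{L08}), which is what makes $\tau>0$ a.s. Your parenthetical ``or via suitable thresholds on the $\uk$'' points in the right direction, but the primary mechanism you describe is the one that does not close.

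Two smaller points. The pointwise bounds you attribute to Lemma~\ref{L05} are those of Lemma~\ref{L09}. And since $\epsilon_\sigma$ is not small here, the bound \eqref{EQ16} is obtained in the paper by treating the noise contribution via Gr\"onwall's inequality on $[0,T]$ (Remark~\ref{R01}), so the constant $C$ in \eqref{EQ16} depends on $T$; your statement that the smallness of $\epsilon_\sigma$ is ``absorbed by the shortness of the elapsed time'' should be made precise in this form, as the time interval is the fixed $[0,T]$, not a short one. Your uniqueness argument (It\^o formula for $\Vert u-\tilde u\Vert_{H^{1/2}}^2$, the product estimate $H^{1/2}\cdot H^{1/2}\hookrightarrow H^{-1/2}$, localization, and a weighted Gr\"onwall inequality) is a correct, slightly more general alternative to the paper's, which simply reruns the absorption argument of Lemma~\ref{L07} using the pointwise smallness of the solutions.
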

\colb

To prove Theorems~\ref{T01} and~\ref{T02}, we first
collect some results regarding the stochastic heat equation in Section~\ref{sec.heat}. 
Next, in Section~\ref{sec04}, we approximate the initial data
by a sum of smooth functions, treating each term in this sum as the initial datum for an SNSE-type
system; see~\eqref{EQ32}. Then, upon truncation, we establish the existence of a unique solution and derive energy bounds for these systems. Moreover, we show that the stopping times up to which each solution exists are almost surely positive and finite with arbitrarily small probability. Finally, in Section~\ref{sec.thm},
we conclude the proofs of our main theorems.

\startnewsection{Stochastic heat equation}{sec.heat}
\colb
Our results rely on an analysis of the 
stochastic heat equation 
\begin{align}
  \begin{split}
    \partial_t\uu( t,x)
    &=\Delta \uu( t,x)
       + f( t,x)
       + g( t,x)\dot{\WW}(t),
    \\
    \uu( 0,x)&= \uu_0 ( x) \Pas
    ,
  \end{split}
  \label{EQ17} 
\end{align}
where 
$u\colon \Omega\times[0,T]\times\mathbb{T}^{3}\to \mathbb{R}$, for some $T>0$. In the sequel, $\Lambda$ denotes the operator corresponding to the Fourier multiplier~$(1+|n|^2)^{\alpha/2}$ for $n \in \mathbb{Z}^3$. 
In the next lemma, we provide estimates for the solutions of stochastic heat equation
over the nonhomogeneous Sobolev spaces in which we construct our solution.

\cole
\begin{Lemma}
\label{L01}
Let $T\in(0,\infty)$
and $\alpha\in \{0,\delta\}$.
Suppose that $u_0\in L^2(\Omega, H^{\frac{1}{2}+\alpha})$,
$f\in L^2(\Omega\times[0,T], H^{\alpha -\frac12})$, 
and $g\in L^2(\Omega\times[0,T], \mathbb{H}^{\frac12 +\alpha})$
are divergence and average-free.
Then there exists a unique solution 
$u\in L^2(\Omega; C([0,T], H^{\frac12 +\alpha}))\cap L^2(\Omega; L^2([0,T], H^{\frac{3}{2}+\alpha}))$
of \eqref{EQ17}
such that
  \begin{align}
    \begin{split}
      \EE\biggl[&
         \sup_{0\leq t\leq T}
        \Vert\uu(t)\Vert_{H^{\frac12 +\alpha}}^2
	 - \Vert\uu_0\Vert_{H^{\frac12 +\alpha}}^2
	+ \int_0^{T} \Vert\uu(t)\Vert_{H^{\frac{3}{2} +\alpha}}^2 \,dt\biggr]
      \\&\indeq
      \leq C
      \EE\biggl[
      \int_0^{T}\Vert f(s)\Vert_{H^{\alpha- \frac12}}^2
           \,dt
      +
      \int_0^{T}\Vert g(s)\Vert_{\mathbb{H}^{\frac12 +\alpha}}^2
           \,dt
      \biggr]
      ,
    \end{split}
    \label{EQ18}
  \end{align}
where $C>0$ is independent of $T$ and $u$ is average-free.
\end{Lemma}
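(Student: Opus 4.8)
The plan is to prove Lemma~\ref{L01} by the standard Galerkin/spectral approach adapted to the nonhomogeneous $L^2$-based Sobolev scale, exploiting the fact that on $\TT^3$ the operators $\Lambda^s = (1-\Delta)^{s/2}$ (in the notation of the lemma, the Fourier multiplier $(1+|n|^2)^{s/2}$) commute with $\Delta$ and with the Leray projector, so that applying $\Lambda^{1/2+\alpha}$ to \eqref{EQ17} reduces the estimate to the case $\alpha' := 1/2+\alpha$ of a plain stochastic heat equation estimate at $H^0 = L^2$ regularity, i.e.\ an energy estimate in $L^2$ with the $H^1$ gain from the dissipation. Concretely, first set $w = \Lambda^{1/2+\alpha} u$, $\tilde f = \Lambda^{1/2+\alpha} f$, $\tilde g = \Lambda^{1/2+\alpha} g$; then $w$ solves $\partial_t w = \Delta w + \tilde f + \tilde g\,\dot\WW$ with $w(0) = \Lambda^{1/2+\alpha} u_0 \in L^2(\Omega;L^2)$, $\tilde f \in L^2(\Omega\times[0,T]; H^{-1})$, and $\tilde g \in L^2(\Omega\times[0,T]; \mathbb L^2)$. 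So it suffices to prove the lemma for $\alpha = 0$ replaced by general $L^2$ data, namely: for $w_0 \in L^2(\Omega;L^2)$, $F \in L^2(\Omega\times[0,T];H^{-1})$, $G \in L^2(\Omega\times[0,T];\mathbb L^2)$, there is a unique $w \in L^2(\Omega;C([0,T];L^2))\cap L^2(\Omega;L^2([0,T];H^1))$ solving $\partial_t w = \Delta w + F + G\dot\WW$, $w(0)=w_0$, with
\begin{align}
\EE\Bigl[\sup_{0\le t\le T}\|w(t)\|_{L^2}^2 - \|w_0\|_{L^2}^2 + \int_0^T\|w(t)\|_{H^1}^2\,dt\Bigr]
\le C\,\EE\Bigl[\int_0^T\|F\|_{H^{-1}}^2\,dt + \int_0^T\|G\|_{\mathbb L^2}^2\,dt\Bigr].
\llabel{EQplan1}
\end{align}
(The factor $\|w_0\|_{L^2}^2$ being \emph{subtracted} on the left is what makes the constant independent of $T$ and of $u_0$; this is the key structural point.)

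For this reduced statement I would proceed in four steps. \textbf{(i) Galerkin approximation.} Let $P_N$ be the projection onto Fourier modes $|n|\le N$; solve the finite-dimensional SDE $\partial_t w_N = \Delta w_N + P_N F + P_N G\,\dot\WW$, $w_N(0)=P_N w_0$, which has a unique strong solution by standard SDE theory since the coefficients are linear. \textbf{(ii) Uniform a priori bound.} Apply the \Ito\ formula to $\|w_N(t)\|_{L^2}^2$:
\begin{align}
\begin{split}
\|w_N(t)\|_{L^2}^2 + 2\int_0^t \|\nabla w_N\|_{L^2}^2\,dr
&= \|P_N w_0\|_{L^2}^2 + 2\int_0^t (P_N F, w_N)\,dr
\\&\indeq
+ 2\int_0^t (w_N, P_N G\,d\WW_r) + \int_0^t \|P_N G\|_{l^2(\mathcal H,L^2)}^2\,dr.
\end{split}
\llabel{EQplan2}
\end{align}
Bound $|(F,w_N)| \le \|F\|_{H^{-1}}\|w_N\|_{H^1} \le \tfrac12\|w_N\|_{H^1}^2 + \tfrac12\|F\|_{H^{-1}}^2$ and absorb the $\|w_N\|_{H^1}^2$ term using $\|w_N\|_{H^1}^2 \le \|w_N\|_{L^2}^2 + \|\nabla w_N\|_{L^2}^2$ together with a Gronwall argument in the $\|w_N\|_{L^2}^2$ factor — since the coefficient there is a fixed constant, the resulting multiplicative factor depends on $T$, so to keep $C$ independent of $T$ one instead keeps $\|w_N(t)\|_{L^2}^2$ on the left and only moves $\tfrac12\int\|\nabla w_N\|^2$; the combination $\sup_t\|w_N(t)\|_{L^2}^2 + \int_0^T\|\nabla w_N\|_{L^2}^2\,dt$ minus $\|w_0\|_{L^2}^2$ is then controlled with a $T$-independent constant. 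For the supremum over $t$ and the martingale term, apply Burkholder--Davis--Gundy to $\int_0^t(w_N, P_N G\,d\WW_r)$, bounding it by $\EE[\sup_s\|w_N(s)\|_{L^2}(\int_0^T\|G\|_{l^2}^2)^{1/2}] \le \tfrac14\EE[\sup\|w_N\|_{L^2}^2] + C\EE\int_0^T\|G\|_{\mathbb L^2}^2\,dt$, absorbing the first piece. \textbf{(iii) Passage to the limit.} The uniform bounds give weak/weak-$*$ limits $w_N \rightharpoonup w$ in $L^2(\Omega;L^2([0,T];H^1))$ and (after checking tightness/using the linear structure, which makes this easy — differences $w_N - w_M$ satisfy the same equation with data $P_N-P_M$ applied, hence $\to 0$ strongly) in fact $w_N \to w$ strongly in $L^2(\Omega;C([0,T];L^2))\cap L^2(\Omega;L^2([0,T];H^1))$; passing to the limit in the weak formulation identifies $w$ as a solution, and lower semicontinuity of norms yields \eqref{EQplan1} for $w$. \textbf{(iv) Uniqueness.} The difference of two solutions solves the homogeneous equation with zero data; the energy estimate forces it to vanish.

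The main obstacle — and the only genuinely non-routine point — is ensuring the constant $C$ in \eqref{EQ18} is independent of $T$, i.e.\ avoiding any Gronwall factor $e^{CT}$. This is handled precisely by the bookkeeping noted above: one does not estimate $\sup_t\|w(t)\|_{L^2}^2$ by Gronwall from $\|w_0\|_{L^2}^2$, but rather groups $\sup_t\|w(t)\|_{L^2}^2 - \|w_0\|_{L^2}^2 + \int_0^T\|w\|_{H^1}^2$ together and shows this whole quantity is dominated by the right-hand side with an absolute constant, using the full strength of the dissipative term $\int_0^t\|\nabla w\|_{L^2}^2$ both to absorb the $\|F\|_{H^{-1}}\|w\|_{H^1}$ coupling and, crucially, to supply the $\|w\|_{L^2}^2$ needed to close (here one uses that $\TT^3$ with zero average admits the Poincar\'e inequality $\|\nabla w\|_{L^2}\gtrsim\|w\|_{L^2}$ for \emph{average-free} $w$, so that $\int_0^t\|\nabla w\|_{L^2}^2 \gtrsim \int_0^t\|w\|_{H^1}^2$ up to constants — this is why the average-free hypothesis is imposed and preserved). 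A secondary point to state carefully is that $\Lambda^{1/2+\alpha}$ preserves the divergence-free and average-free conditions and commutes with everything in sight, so the reduction to \eqref{EQplan1} is legitimate; and that $w = \Lambda^{1/2+\alpha}u$ lying in the stated spaces is exactly equivalent to $u$ lying in $L^2(\Omega;C([0,T];H^{1/2+\alpha}))\cap L^2(\Omega;L^2([0,T];H^{3/2+\alpha}))$. Everything else is the standard stochastic-heat-equation machinery.
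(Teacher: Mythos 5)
Your proposal is correct and follows essentially the same route as the paper: apply $\Lambda^{\frac12+\alpha}$ to reduce to an $L^2$-level energy identity, use It\^o's formula plus the duality pairing $\|f\|_{H^{\alpha-\frac12}}\|u\|_{H^{\frac32+\alpha}}$ and Burkholder--Davis--Gundy, invoke the Poincar\'e inequality for average-free functions to recover the full $H^{\frac32+\alpha}$ norm with a $T$-independent constant, and pass to the limit by exploiting linearity (differences of approximants satisfy the same equation with vanishing data). The only difference is that you approximate by Galerkin/Fourier truncation where the paper mollifies the data and cites an existence result for regular coefficients; this is immaterial to the argument.
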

\colb

\begin{proof}[Proof of Lemma~\ref{L01}]
 
We provide a brief explanation of the argument for obtaining a solution that satisfies \eqref{EQ18}; see~\cite{KXZ} or~\cite{Ki} for more details. First,
we mollify $f$, $g$, and $u_0$ in~\eqref{EQ17}
to $f^{\epsilon}$, $g^{\epsilon}$, and $u_0^{\epsilon}$,
obtaining a sequence of spatially regular solutions~$\ueps$ (see~\cite[Theorem 3.1]{LR}). Note that $\Lambda^{\frac12 +\alpha}\ueps$ is a well-defined function due to the regularity of~$\ueps$; moreover, it satisfies 
\begin{align}
	\begin{split}
		\partial_t(\Lambda^{\frac12 +\alpha}\ueps)( t,x)
		&=\Delta (\Lambda^{\frac12 +\alpha}\ueps)( t,x)
		+ (\Lambda^{\frac12 +\alpha}f^{\epsilon})( t,x)
		+ (\Lambda^{\frac12 +\alpha}g^{\epsilon})( t,x)\dot{\WW}(t),
		\\
		(\Lambda^{\frac12 +\alpha}\ueps)( 0,x)&= (\Lambda^{\frac12 +\alpha}\ueps_0) ( x) \Pas
	\end{split}
	\llabel{EQ17-2} 
\end{align}
Then, by It\^o's formula, we have
 \begin{align}
  \begin{split}
   &\Vert\Lambda^{\frac12 +\alpha}\ueps(t)\Vert_{L^2}^{2}
   - \Vert\Lambda^{\frac12 +\alpha}\ueps_0\Vert_{L^2}^{2} 
   + 2\int_0^t \int_{\TT^3} | \Lambda^{\frac{3}{2}+\alpha} \ueps(r)|^2 \,dx dr
  \\&\indeq
  \leq
  2\int_0^t\left|  (f^{\epsilon}(r),\Lambda^{1+2\alpha}\ueps)\right| \,dr
\\&\indeq\indeq
+ \int_0^t\int_{\TT^3} \Vert \Lambda^{\frac12 +\alpha}g^{\epsilon}(r)\Vert_{l^2}^2\,dx dr
+ 2 
\left|\int_0^t( g^{\epsilon}(r),\Lambda^{1+2\alpha}\ueps(r)) \,d\WW_r\right|
\\&\indeq
= 
 I_1 + I_2 + I_3
\commaone t\in [0,T].    
\end{split}
   \label{EQ19}
\end{align}

The term $I_2$ is bounded by the $L^2([0,T], \mathbb{H}^{\frac12 +\alpha})$-norm of $g^{\epsilon}$ almost surely. For $I_1$, we use
\begin{align}
  \begin{split}
   (\Lambda^{\alpha- \frac12}f^{\epsilon},\Lambda^{\frac{3}{2}+\alpha}u^{\epsilon})
   &\leq
   \Vert \Lambda^{\alpha- \frac12}f^{\epsilon}\Vert_{L^2}
   \Vert \Lambda^{\frac{3}{2}+\alpha}u^{\epsilon}\Vert_{L^2}
   \leq
   \eta \Vert \Lambda^{\frac{3}{2}+\alpha}u^{\epsilon}\Vert_{L^2}^{2}
   +
   C_{\eta}
   \Vert \Lambda^{\alpha- \frac12}f^{\epsilon}\Vert_{L^2}^{2},
  \end{split}
   \label{EQ20}
  \end{align}
where $\eta>0$ is arbitrary.
The spatial integral in $I_3$ has the same structure,
so we estimate it similarly
  \begin{align}
  \begin{split}
  	\EE&\left[ 
  	 \sup_{0\leq t\leq T}
  	  \left|\int_0^t ( g^{\epsilon}(r),\Lambda^{1+2\alpha}\ueps(r)) d\WW_r\right|
  	\right]
  	\le 
  	\EE\left[
  	 \biggl( 
  	   \int_0^T
  	    \left\Vert ( \Lambda^{\alpha-\frac{1}{2}}g^{\epsilon},\Lambda^{\frac{3}{2}+\alpha}\ueps)
  	     \right\Vert^2_{\mathbb{L}^2} \,dr
  	   \biggr)^\frac12 \right]
  \\&\leq  \EE\left[ \int_0^T \left(
  \eta \Vert \Lambda^{\frac{3}{2}+\alpha}u^{\epsilon}(r)\Vert_{L^2}^{2}
  +
  C_{\eta}
  \Vert \Lambda^{\alpha- \frac12}g^{\epsilon}(r)\Vert_{\mathbb{L}^2}^2 \right)\,dr  \right]
   ,
  \end{split}
   \label{EQ21}
  \end{align} 
where we used the Burkholder-Davis-Gundy inequality in the first step.
Taking the supremum over $t$ and the expectation on both sides of \eqref{EQ19}, then employing \eqref{EQ20}--\eqref{EQ21} with a sufficiently small $\eta$, we arrive at 
\begin{align}
	\begin{split}
		&\EE\biggl[
		\sup_{0\leq t\leq T}
		\Vert\uu^{\epsilon}(t,\cdot)\Vert_{H^{\frac12 +\alpha}}^2
		- \Vert\uu^{\epsilon}_0\Vert_{H^{\frac12 +\alpha}}^2
		+ \int_0^{T}\int_{\mathbb{T}^{3}} | \Lambda^{\frac{3}{2}+\alpha}u^{\epsilon}|^2\,dx \,dt\biggr]
		\\&\indeq
		\leq C
		\EE\biggl[
		\int_0^{T}\Vert \Lambda^{\alpha-\frac12}f^{\epsilon}(s,\cdot)\Vert_{L^2}^2
		\,dt
		+
		\int_0^{T}\Vert \Lambda^{\alpha- \frac12}g^{\epsilon}(s,\cdot)\Vert_{\mathbb{L}^2}^2
		\,dt
		+\int_0^{T} \Vert \Lambda^{\frac12 +\alpha}g^{\epsilon}(s, \cdot)\Vert_{\mathbb{L}^{2}}^{2}  \,dt
		\biggr]
		.
	\end{split}
	\label{EQ22}
\end{align} 
Then, upon utilizing the Poincar\'e inequality, we conclude~\eqref{EQ18} for~$\ueps$. Finally, 
we may apply It\^o's formula to $u^\epsilon-u^{\epsilon'}$ and repeat the computations above 
obtaining a bound akin to \eqref{EQ22} involving $f^\epsilon-f^{\epsilon'}$, $g^\epsilon-g^{\epsilon'}$, and 
$u_0^\epsilon-u_0^{\epsilon'}$.
Utilizing the resulting inequalities, these functions converge to $0$ as $\epsilon,\epsilon' \to 0$,
we obtain a limit
$u\in L^2(\Omega; C([0,T], H^{\frac12 +\alpha}))\cap L^2(\Omega; L^2([0,T],H^{\frac{3}{2}+\alpha}))$
of $\ueps$ as $\epsilon\to 0$.
This function~$u$ obeys the energy inequality~\eqref{EQ18}, solves the stochastic heat equation in its strong formulation~\eqref{EQ14}, and remains average-free under the equation~\eqref{EQ17} and the assumptions on $u_0$, $f$, and~$g$.
\end{proof}

When addressing the stochastic Navier-Stokes-like equations, we shall apply this lemma with $f=\mathcal{P}((v\cdot\nabla) w)$. The Sobolev product inequality, the Sobolev embedding inequality, and the interpolation inequality yield
 \begin{align}
		 	\begin{split}
			 	\Vert v \otimes w\Vert_{H^{\frac{1}{2}+\alpha}}^2
			 	 &\lec
		 	  \Vert  v\Vert_{W^{\frac{1}{2}+\alpha, \frac{6}{2+\alpha}}}^2
		 	   \Vert w\Vert_{L^{\frac{6}{1-\alpha}}}^2
			 	   +\Vert v\Vert_{L^{\frac{6}{1-\alpha}}}^2
			 	   \Vert w\Vert_{W^{\frac{1}{2}+\alpha, \frac{6}{2+\alpha}}}^2
			 	  \\& \lec
			 	  \Vert v\Vert_{H^{1+\frac{\alpha}{2}}}^{2}
			 	  \Vert w\Vert_{H^{1+\frac{\alpha}{2}}}^{2}
			 	  \lec
			 	    \Vert v\Vert_{H^{\frac{1}{2}+\alpha}}^{1+\alpha}
			 	    \Vert v\Vert_{H^{\frac{3}{2}+\alpha}}^{1-\alpha}
			 	    \Vert w\Vert_{H^{\frac{1}{2}+\alpha}}^{1+\alpha}
			 	     \Vert w\Vert_{H^{\frac{3}{2}+\alpha}}^{1-\alpha}
,
			 	    \end{split}
		 	    \label{EQ23}
		 \end{align}
for $\alpha\in[0,1)$. Therefore, if $v=w$ and $\alpha=0$, we have
  \begin{align}
  \begin{split}
   \Vert \Lambda^{-\frac{1}{2}}f\Vert_{L^2}^2
   &\leq
   \Vert v\otimes v\Vert_{H^{\frac{1}{2}}}^2
   \lec
   \Vert v\Vert_{H^\frac12}^2
   \Vert v\Vert_{H^\frac{3}{2}}^2
  \end{split}
   \label{EQ24}
  \end{align}
under the incompressibility condition; if $\alpha\neq 0$, we still have
  \begin{align}
  \begin{split}
   \Vert \Lambda^{-\frac12+\alpha}f\Vert_{L^2}^2
   \leq
   \Vert v\otimes v\Vert_{H^{\frac{1}{2}+\alpha}}^2
   \lec
   \Vert v\Vert_{H^\frac12}^2
   \Vert v\Vert_{H^{\frac{3}{2}+\alpha}}^2
  \end{split}
   \label{EQ25}
  \end{align}
by interpolation.

\startnewsection{Global solutions of the truncated difference equations}{sec04}

\subsection{Decomposition into smooth data and a construction of $H^{\frac12 +\delta}$ solutions}

We consider a divergence-free and average-free initial datum $u_0$ that satisfies \eqref{EQ11}, with a fixed $\epsilon_0>0$. The initial datum $u_0$ shall be approximated by partial sums of a sequence of smooth functions.

\cole
\begin{Lemma}[Decomposition of initial data]
\label{L05}
There exists a sequence 
  \begin{align}
   v_{0}^{(0)},v^{(1)}_{0},v_{0}^{(2)},\ldots \in L^\infty(\Omega;C^\infty(\mathbb{T}^3))
   \label{EQ26}
  \end{align}
consisting of divergence-free functions with zero-average, which satisfy
  \begin{align}
   \sup_{\Omega} \Vert   v_{0}^{(0)}\Vert_{H^\frac12 (\mathbb{T}^3)}
   \leq 2\epsilon_0
   \label{EQ27}
  \end{align}
and
  \begin{align}
   \sup_{\Omega}\Vert   v_{0}^{(k)}\Vert_{H^\frac12 (\mathbb{T}^3)}
   \leq \frac{\epsilon_0}{4^{k}}
    \comma k=1,2,3,\ldots
    .
   \label{EQ28}
  \end{align}
Moreover,
  \begin{align}
   u_{0} = v_{0}^{(0)} + v_{0}^{(1)} + v_{0}^{(2)} + \cdots \quad\mbox{ in }H^{\frac12}(\mathbb{T}^3)
   ,
   \label{EQ29}
  \end{align}
almost surely.
\end{Lemma}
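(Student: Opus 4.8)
The plan is to build the decomposition by a Littlewood–Paley-type splitting adapted to the $H^{1/2}$ norm, using the extra room afforded by the slightly higher regularity $H^{1/2+\delta}$ only in each individual piece. First I would fix, for each $\omega\in\Omega$, a dyadic frequency cutoff: let $P_{\le N}$ denote the projection onto Fourier modes $|n|\le N$. Since $u_0(\omega)\in H^{1/2}(\TT^3)$ with $\sup_\Omega\Vert u_0\Vert_{H^{1/2}}\le\epsilon_0$, the tails $\Vert(I-P_{\le N})u_0\Vert_{H^{1/2}}\to 0$ as $N\to\infty$; the subtlety is that the rate is not uniform in $\omega$, so I cannot simply pick one sequence of cutoff levels. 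Instead I would choose the cutoff levels $\omega$-dependently: define $N_k(\omega)$ to be the smallest dyadic integer such that $\Vert(I-P_{\le N_k})u_0(\omega)\Vert_{H^{1/2}}\le \epsilon_0/4^{k+1}$, which exists by the tail estimate, and (taking $N_0=0$ if needed) arrange $N_0\le N_1\le N_2\le\cdots$. Then set
\begin{align}
 v_0^{(0)} = P_{\le N_0} u_0,\qquad v_0^{(k)} = (P_{\le N_k} - P_{\le N_{k-1}})u_0\quad (k\ge 1),\nonumber
\end{align}
so that $v_0^{(0)}+\cdots+v_0^{(k)} = P_{\le N_k}u_0$. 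Each $v_0^{(k)}$ is a finite Fourier sum, hence lies in $L^\infty(\Omega;C^\infty(\TT^3))$, and since $P_{\le N}$ commutes with $\mathcal P$, differentiation, and integration, each $v_0^{(k)}$ is divergence-free and average-free.

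Next I would verify the norm bounds. For \eqref{EQ27}, $\Vert v_0^{(0)}\Vert_{H^{1/2}}=\Vert P_{\le N_0}u_0\Vert_{H^{1/2}}\le\Vert u_0\Vert_{H^{1/2}}\le\epsilon_0\le 2\epsilon_0$. For \eqref{EQ28} with $k\ge1$, write $v_0^{(k)} = (I-P_{\le N_{k-1}})u_0 - (I-P_{\le N_k})u_0$; the two Fourier projections act on disjoint frequency shells up to overlap, but more simply, since $P_{\le N_k}-P_{\le N_{k-1}}$ is the projection onto the shell $N_{k-1}<|n|\le N_k$, which is a sub-projection of $I-P_{\le N_{k-1}}$, we get $\Vert v_0^{(k)}\Vert_{H^{1/2}}\le\Vert(I-P_{\le N_{k-1}})u_0\Vert_{H^{1/2}}\le\epsilon_0/4^{k}$ by the defining property of $N_{k-1}$. (For $k=1$ one uses $\Vert(I-P_{\le N_0})u_0\Vert_{H^{1/2}}\le\epsilon_0/4$, so one should define $N_0$ so that this already holds; this is just a relabeling of the indices and costs nothing.) Since all these estimates hold pointwise in $\omega$ with bounds independent of $\omega$, taking $\sup_\Omega$ gives \eqref{EQ27}–\eqref{EQ28}. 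Finally, $\sum_{k=0}^{K}v_0^{(k)} = P_{\le N_K}u_0 \to u_0$ in $H^{1/2}(\TT^3)$ for every fixed $\omega$ since $N_K(\omega)\to\infty$, which is \eqref{EQ29} almost surely (in fact surely).

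The one genuine point requiring care is \textbf{measurability}: the cutoff levels $N_k$ are $\omega$-dependent, so I must check that $v_0^{(k)}$ is a bona fide random variable valued in $C^\infty(\TT^3)$ (and $\mathcal F$-measurable, or $\mathcal F_0$-measurable as needed). This follows because $N_k$ is measurable: $\{N_k\le M\} = \{\Vert(I-P_{\le M})u_0\Vert_{H^{1/2}}\le \epsilon_0/4^{k+1}\}$ is measurable as $u_0$ is a random variable in $H^{1/2}$ and $\omega\mapsto\Vert(I-P_{\le M})u_0(\omega)\Vert_{H^{1/2}}$ is measurable; then on each set $\{N_k = M\}$ (a countable partition over dyadic $M$) we have $v_0^{(k)} = (P_{\le M}-P_{\le N_{k-1}})u_0$, a measurable function, so $v_0^{(k)}$ is measurable as a countable gluing. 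I expect this measurability bookkeeping — rather than any analytic estimate — to be the main (and only mild) obstacle; everything else is an immediate consequence of the frequency-localization structure. An alternative that avoids $\omega$-dependent cutoffs is to pass to the $L^\infty(\Omega)$ setting directly: since $\sup_\Omega\Vert u_0\Vert_{H^{1/2}}\le\epsilon_0$, one can try deterministic cutoff levels, but without uniform tail decay this fails, so the $\omega$-dependent construction above is the natural route.
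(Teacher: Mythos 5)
Your construction (sharp dyadic Fourier truncation of $u_0$ at $\omega$-dependent levels $N_k(\omega)$) is in the same spirit as the paper's, which follows the mollification argument of \cite[Lemma~3.1]{KX2}; the $H^{1/2}$ bounds \eqref{EQ27}--\eqref{EQ28}, the convergence \eqref{EQ29}, the divergence- and average-free properties, and the measurability of the pieces are all handled correctly. However, there is one genuine gap, and it sits precisely at the step you dismiss as immediate: the claim that each $v_0^{(k)}$ lies in $L^\infty(\Omega;C^\infty(\TT^3))$ ``since it is a finite Fourier sum.'' For each fixed $\omega$ this gives $v_0^{(k)}(\omega)\in C^\infty(\TT^3)$, but it does not give any control of higher-order norms that is uniform over $\Omega$. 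As you yourself note, the tail decay of $u_0$ in $H^{1/2}$ is not uniform in $\omega$, so in general $\sup_\Omega N_k(\omega)=\infty$: for instance, if $u_0(\omega_j)=\epsilon_0\,e_{n_j}/\Vert e_{n_j}\Vert_{H^{1/2}}$ with $|n_j|\to\infty$, then $N_0(\omega_j)\gtrsim |n_j|$ and $\Vert v_0^{(0)}(\omega_j)\Vert_{H^{1/2+\delta}}\sim \epsilon_0|n_j|^{\delta}\to\infty$. Thus the range of $v_0^{(k)}$ is not bounded in $C^\infty(\TT^3)$ (nor even in $H^{1/2+\delta}$), and the conclusion \eqref{EQ26} in its natural reading fails for your decomposition.

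This is not a cosmetic point, because the extra regularity of the pieces is exactly what the decomposition is for: immediately after the lemma the paper fixes constants $\MM_k$ with $\sup_\Omega\Vert v_0^{(k)}\Vert_{H^{1/2+\delta}}\le\MM_k$ in \eqref{EQ30}, and the finiteness of $\MM_k$ is used quantitatively in the Markov estimate \eqref{EQ111} of Lemma~\ref{L10} (where $M_k$ must be chosen large relative to $\MM_k$). Your argument establishes only $\omega$-pointwise smoothness, which is strictly weaker than what the lemma asserts and what is consumed downstream. To close the gap you would need either to quantify the higher norms of the truncations uniformly in $\omega$ (which, for $\omega$-dependent cutoff levels, requires an additional argument or an additional hypothesis on $u_0$ beyond uniform smallness in $H^{1/2}$), or to work with $\omega$-dependent (random) bounds $\MM_k$ and track them through the choice of the cutoff thresholds $M_k$ in \eqref{EQ33}; as written, the proposal silently conflates ``smooth for each $\omega$'' with ``bounded in $C^\infty$ uniformly over $\Omega$.''
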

\colb

The proof is analogous to that of~\cite[Lemma~3.1]{KX2} and is therefore omitted.

Now, fix $p_0\in(0,1)$, and let $\epsilon_0\in (0, 1/2)$ (to be determined; see the proofs of Lemma~\ref{L10} and Theorem~\ref{T01}).
Additionally, decompose the initial datum $u_0$ into a sequence of functions as in
\eqref{EQ26}, ensuring that \eqref{EQ27}--\eqref{EQ29} are satisfied. Furthermore, let $\MM_0, \MM_1, \ldots$ be such that
  \begin{align}
   \sup_{\Omega}\Vert   v_{0}^{(k)}\Vert_{H^{\frac12 +\delta}}
   \leq \MM_k
   ,
   \label{EQ30}
  \end{align}
for all $k\in\NNz$.
We construct our solution by considering 
  \begin{align}
	\umo=0
	\andand
	\uk = \vz + \cdots + \vk
	\comma k\in\NNz
	 ,
	\label{EQ31}
\end{align}
where $\vk$ solves the incompressible Navier-Stokes-like system
  \begin{align}
	\begin{split} 
		\partial_t \vk  -\Delta \vk
		&=
		-
		\mathcal{P}\bigl( \vk\cdot \nabla \vk\bigr)
		-
		\mathcal{P}\bigl( \ukm\cdot \nabla \vk\bigr)
		-
		\mathcal{P}\bigl( \vk\cdot \nabla \ukm\bigr)
		\\&\indeq
		+
		\sigma(t, \uk( t,x))\dot{\WW}(t)
		-
		\sigma(t, \ukm( t,x))\dot{\WW}(t),
		\\   \nabla\cdot v^{(k)}( t,x) &= 0
		,
		\\
		v^{(k)}( 0,x)&=v_{0}^{(k)} (x)  \Pas
		\commaone x\in\TT^3 
		.
	\end{split}
	\label{EQ32}
\end{align}
It follows from \eqref{EQ31}--\eqref{EQ32} that $\uk$ 
is a solution to the
SNSE with the initial data
$v_{0}^{(0)} + \cdots + v_{0}^{(k)}$ over the interval where $v^{(j)}$, for $j=0,\ldots, k$, all exist; see~\eqref{EQ119} and~\eqref{EQ120} below.

To obtain $v^{(k)}$, we employ cutoff functions
defined as follows.
First, we choose a smooth function
$\theta\colon[0,\infty)\to [0,1]$ such that $\theta\equiv 1$ on $[0,1]$ and 
$\theta\equiv 0$ on~$[2,\infty)$. For each $k \in \mathbb{N}_0$, we introduce 
  \begin{align}
	\psi_k(v)
	=\psi
	:=
	\theta\left(
	 \frac{1}{M_k}
	 \Vert v(t)\Vert_{H^{\frac{1}{2}+\delta}}
	 + \frac{1}{M_k}\left(\int_0^t \Vert v(s)\Vert_{H^{{\frac{3}{2}}+\delta}}^2\,ds\right)^\frac12
	  \right)
	\label{EQ33}
\end{align}
and
\begin{align}
	\phi_k(v)=
	\phi
	:=
	\theta\left(\frac{2^{k}}{\bar\epsilon}\Vert v(t)\Vert_{H^\frac12}
	   +\frac{2^{k}}{\bar\epsilon}\left(\int_0^t \Vert v(s)\Vert_{H^{\frac{3}{2}}}^2\,ds\right)^\frac12\right)
	,\label{EQ34}
\end{align}
where $\bar{\epsilon}\in (2\epsilon_0, 1)$, and $\{M_k\}_{k\in\mathbb{N}_0} \in \mathbb{R}^+$
is a nondecreasing sequence to be determined; see the proofs of Lemma~\ref{L10} and Theorem~\ref{T01}.
Above, we abbreviated $v=v^{(k)}$.
We emphasize that, in contrast to \cite{KX2}, we have incorporated
the Sobolev energy dissipation integral into the cutoff.
Then, we study the system
\begin{align}
	\begin{split}
		\partial_t v  -\Delta v
		&=
		-
		\psi^2 \phi^2
		\mathcal{P}\bigl( v\cdot \nabla v\bigr)
		-
		\psi^2 \phi^2 \zeta
		\mathcal{P}\bigl( w\cdot \nabla v\bigr)
		-
		\psi^2 \phi^2 \zeta
		\mathcal{P}\bigl( v\cdot \nabla w\bigr)
		\\&\indeq\indeq
		+
		\psi^2 \phi^2 \zeta\left( 
		\sigma(t, v+w) - \sigma(t, w)
		\right)
		\dot{\WW}(t),
		\\   \nabla\cdot v( t,x) &= 0
		,
		\\
		v( 0,x)&= v_{0} (x)  \Pas
		\commaone x\in\TT^3 \commaone t\in [0,T]
		,
	\end{split}
	\label{EQ35}
\end{align}
where 
\begin{align}
	w=\ukm
	\comma
	v_0 = v^{(k)}_{0}
	\comma
	\zeta=\zeta_{k-1}:=\mathds{1}_{k=0}+\mathds{1}_{k>0}\Pi_{i=0}^{k-1}\psi_i,
      \andand
	\quad
	\psi_i=\psi(v^{(i)})
	\label{EQ36}
\end{align}
are given. 
We shall solve \eqref{EQ35} in two steps,  the first involving the 
analysis of the equation that is linearized on the cut-off level below, which corresponds to the fixed-point iteration scheme for \eqref{EQ35},
\begin{align}
	\begin{split}
		\partial_t v -\Delta v
		&=
		-
		\psi \bar{\psi} \phi \bar{\phi}\Bigl(
		\mathcal{P}\bigl( \bar{w}\cdot \nabla \bar{w}\bigr)
		+
		\zeta
		\mathcal{P}\bigl( w\cdot \nabla \bar{w}\bigr)
		+
	    \zeta
		\mathcal{P}\bigl( \bar{w}\cdot \nabla w\bigr)\Bigr)
		\\&\indeq\indeq
		+
		\psi \bar{\psi} \phi \bar{\phi} \zeta
		(\sigma(t, \bar{w}+w)
		-
		\sigma(t, w)
		)\dot{\WW}(t),
		\\   \nabla\cdot v( t,x) &= 0
		,
		\\
		v( 0,x)&=v_{0} (x)  \Pas
		\commaone x\in\TT^3 \commaone t\in [0,T]
		,
	\end{split}
	\label{EQ37}
\end{align}
where $\phi$, $\psi$, $w$, $\zeta$, and $v_0$ are as in \eqref{EQ33}--\eqref{EQ34} and \eqref{EQ36}, and
\begin{align}
	\bar{\psi}
	:=\psi(\bar{w})
	\andand
	\bar{\phi}
	:=\phi(\bar{w})
	.\label{EQ38}
\end{align}
It is necessary to first establish the existence of iterative
solutions of \eqref{EQ35} before demonstrating the convergence of their solutions. Thus, in Lemma~\ref{LM01}, we prove that \eqref{EQ37} has a unique global solution, ensuring the existence of all iterative solutions. Subsequently, in Lemma~\ref{LM02}, we establish the convergence of these iterative solutions, thereby providing the existence of a solution to~\eqref{EQ35}. In Section~\ref{sec.global}, we derive global energy bounds (Lemmas~\ref{L07} and~\ref{LM04}) and pointwise energy controls (Lemma~\ref{L09}) for the solution of \eqref{EQ35} (and also \eqref{EQ85} with $k$ present). Finally, we demonstrate that the stopping time associated with the cutoff functions in \eqref{EQ33}--\eqref{EQ34} is almost surely positive and finite with small probability.

Next, we state the existence and uniqueness result for~\eqref{EQ37}, recalling the definitions of $\psi$, $\phi$, $\zeta$, $w$, $v_0$, $\bar{\psi}$, and $\bar{\phi}$ given in~\eqref{EQ36} and~\eqref{EQ38}. 
We emphasize that the goal of the next two lemmas is to solve for $v=v^{(k)}$ in \eqref{EQ35} 
given $w=u^{(k-1)}$. Therefore, the condition $w \in L^2(\Omega; C([0,T], H^{\frac{1}{2}+\delta}))\cap
L^2(\Omega; L^2([0,T],H^{\frac{3}{2}+\delta}))$ is our induction assumption.

\cole
\begin{Lemma}
\label{LM01}
Let $T>0$, $k\in\mathbb{N}_0$, and
$w,\bar{w} \in L^2(\Omega; C([0,T], H^{\frac{1}{2}+\delta}))\cap
L^2(\Omega; L^2([0,T],H^{\frac{3}{2}+\delta}))$ be as above. Suppose that $v_0$, $w$, and $\bar{w}$ are divergence and average-free. Then,
the system \eqref{EQ37} has a unique strong solution
$v \in L^2(\Omega; C([0,T], H^{\frac{1}{2}+\delta}))\cap L^2(\Omega; L^2([0,T], H^{\frac{3}{2}+\delta}))$ that is divergence and average-free.
\end{Lemma}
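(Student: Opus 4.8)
The plan is to solve \eqref{EQ37} by a fixed-point argument on the map $v \mapsto $ (solution of the stochastic heat equation with the right-hand side dictated by \eqref{EQ37}), using Lemma~\ref{L01} with $\alpha=\delta$ as the linear solver. Since \eqref{EQ37} is actually \emph{linear} in the unknown $v$ through the cutoff $\psi=\psi(v)$ — wait, note that here the unknown is $v$ and $\psi=\psi_k(v)$ depends on $v$, so the equation is genuinely nonlinear through the cutoff — the cleanest route is a contraction on a short time interval $[0,T_1]$ followed by iteration to cover $[0,T]$, using that the estimates from Lemma~\ref{L01} are $T$-independent. First I would set up the solution map $\Phi$: given $\tilde v\in L^2(\Omega;C([0,T_1],H^{\frac12+\delta}))\cap L^2(\Omega;L^2([0,T_1],H^{\frac32+\delta}))$, let $\psi=\psi_k(\tilde v)$, and define $\Phi(\tilde v)$ to be the solution furnished by Lemma~\ref{L01} of the stochastic heat equation with $f = -\psi\bar\psi\phi\bar\phi(\mathcal P(\bar w\cdot\nabla\bar w)+\zeta\mathcal P(w\cdot\nabla\bar w)+\zeta\mathcal P(\bar w\cdot\nabla w))$ and $g=\psi\bar\psi\phi\bar\phi\zeta(\sigma(t,\bar w+w)-\sigma(t,w))$. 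Here $\bar\psi,\bar\phi,\phi$ are cutoffs evaluated at the \emph{fixed} data $\bar w$ and at $\tilde v$ respectively — I need to be careful about exactly which cutoffs depend on the iterate; reading \eqref{EQ37}, $\phi=\phi_k(v)$ also depends on the unknown. So $\Phi$ is defined via the cutoffs $\psi_k(\tilde v)$ and $\phi_k(\tilde v)$ evaluated at the input, and the solution at the output.

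The key steps, in order: (i) \emph{$\Phi$ is well-defined and maps into the right space.} By \eqref{EQ23}--\eqref{EQ25} with $\alpha=\delta$, on the support of $\psi\bar\psi$ we have $\|\bar w\|_{H^{\frac12+\delta}}\lesssim M_k$ and $\int_0^t\|\bar w\|_{H^{\frac32+\delta}}^2\lesssim M_k^2$, hence $\|\Lambda^{\delta-\frac12}f\|_{L^2}^2 \lesssim \|\bar w\otimes\bar w\|_{H^{\frac12+\delta}}^2 + \cdots \in L^1(\Omega\times[0,T])$, using also the bound on $w$ coming from the induction hypothesis together with the cutoff $\zeta$ (which is a product of $\psi_i(v^{(i)})$ and thus bounded, supported where each $v^{(i)}$ is controlled); and $\|g\|_{\mathbb H^{\frac12+\delta}}^2 \lesssim \epsilon_\sigma^2\|\bar w\|_{H^{\frac12+\delta}}^2 \in L^1$ by \eqref{EQ04}. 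So Lemma~\ref{L01} applies and $\Phi(\tilde v)$ lies in the required space with an a priori bound. (ii) \emph{Contraction for small $T_1$.} Given $\tilde v_1,\tilde v_2$, I would apply Lemma~\ref{L01} to the difference $\Phi(\tilde v_1)-\Phi(\tilde v_2)$, whose right-hand side is $f_1-f_2$ and $g_1-g_2$; these differences vanish unless at least one cutoff factor differs, and the Lipschitz property of $\theta$ together with the Lipschitz dependence of $\psi_k,\phi_k$ on their argument in the norm $\sup_t\|\cdot\|_{H^{\frac12+\delta}} + (\int_0^t\|\cdot\|_{H^{\frac32+\delta}}^2)^{1/2}$ gives a bound of the form $C(M_k,\|w\|,\|\bar w\|,\epsilon_\sigma)\, T_1^{\kappa}\, \|\tilde v_1-\tilde v_2\|^2$ for some $\kappa>0$, where the factor $T_1^{\kappa}$ comes from the fact that $f$ and $g$ are themselves multiplied by cutoffs vanishing at $t=0$ (or from Hölder in time against the integrable $H^{\frac32+\delta}$-norms). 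Choosing $T_1$ small makes $\Phi$ a contraction; Banach's fixed point theorem gives a unique fixed point on $[0,T_1]$. (iii) \emph{Globalization.} Since the Lemma~\ref{L01} bounds are independent of the length of the interval and the nonlinearity is globally controlled by the cutoffs (the right-hand side is bounded in $L^2(\Omega\times[0,T])$ by a constant depending only on $M_k$, $\bar\epsilon$, $\epsilon_\sigma$, and the fixed data $w,\bar w$, \emph{uniformly} in the iterate because of the cutoffs), I can restart the contraction from $T_1$ with the same $T_1$ and patch, covering $[0,T]$ in finitely many steps; uniqueness on each piece yields uniqueness on $[0,T]$. (iv) \emph{Divergence- and average-free.} This is inherited from the $\mathcal P$ in front of every nonlinear term, the divergence-free assumption on $\sigma$, and the last sentence of Lemma~\ref{L01}.

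The main obstacle I anticipate is step (ii): extracting a genuine contraction factor. The subtlety is that the cutoffs $\psi_k(\tilde v),\phi_k(\tilde v)$ are the \emph{only} place the iterate enters, and differencing them produces terms like $(\psi(\tilde v_1)-\psi(\tilde v_2))\cdot(\text{stuff in }\bar w, w)$; controlling $\psi(\tilde v_1)-\psi(\tilde v_2)$ requires that the map $\tilde v\mapsto \sup_{[0,t]}\|\tilde v\|_{H^{\frac12+\delta}}+(\int_0^t\|\tilde v\|_{H^{\frac32+\delta}}^2)^{1/2}$ be Lipschitz into $L^\infty_t$, which it is (reverse triangle inequality), but then I must absorb the resulting $\|\tilde v_1-\tilde v_2\|$ measured in the \emph{strong} norm against the right-hand-side data, and gain smallness. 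The gain of a power of $T_1$ is the delicate point: it comes either from the explicit time integrals $\int_0^t\|\cdot\|^2\,ds \le t^{?}(\cdots)$ inside the cutoff arguments combined with Hölder, or from the observation that $f,g$ carry the factor $\psi\bar\psi\phi\bar\phi$ which, on a short interval, can be made small in $L^2_t$ norm relative to the data only if the data has no atom at $t=0$ — which is not automatic. The safe resolution is to obtain the contraction constant as $C\cdot(\text{a priori bound of the data in }L^2([0,T_1]))$, which tends to $0$ as $T_1\to 0$ by dominated convergence since the integrands are fixed $L^1(\Omega\times[0,T])$ functions; this avoids needing an explicit power of $T_1$ and is the approach I would ultimately take. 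Everything else is a routine application of Lemma~\ref{L01} and the product estimates \eqref{EQ23}--\eqref{EQ25}.
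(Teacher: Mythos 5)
Your proposal is correct and takes essentially the same approach as the paper: the paper's argument is precisely the Picard iteration for your solution map (see \eqref{EQ40}, where the cutoffs $\psi^{(m-1)},\phi^{(m-1)}$ are evaluated at the previous iterate while $f$ and $g$ are built from the fixed data $w,\bar w$), with the contraction factor $C_k(T_0+T_0^{\delta})$ in \eqref{EQ51} obtained from the pathwise bounds \eqref{EQ46} and \eqref{EQ48} that the cutoffs provide, followed by globalization on intervals of a fixed length and the same difference argument for uniqueness. Your dominated-convergence resolution of the smallness issue is a harmless variant of the paper's explicit H\"older-in-time gain of $T_0^{\delta}$.
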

\colb

\begin{proof}[Proof of Lemma~\ref{LM01}]
We employ a fixed-point strategy to solve~\eqref{EQ37} for a fixed~$k$.
Denote
	\begin{align}
		\psi^{(m-1)}
		:=\psi(v^{(m-1)})
              \andand
		\phi^{(m-1)}
		:=\phi(v^{(m-1)}),
		\label{EQ39}
	\end{align}
for $m \in \mathbb{N}_0$, which are, by definition, functions of $t$ only. Consider the iteration scheme
   \begin{align}
   	\begin{split}
   		(\partial_t  -\Delta) v^{(m)}
   		&=
   		-
   		\psi^{(m-1)} \phi^{(m-1)}
   		\bigl(f -g
   		\dot{\WW}(t)
		\bigr),
   		\\   \nabla\cdot v^{(m)}( t,x) &= 0
   		,
   		\\
   		v^{(m)}( 0,x)&= v_{0} (x)  \Pas
	\commaone x\in\TT^3 \commaone t\in [0,T]
   		,
   	\end{split}
   	\label{EQ40}
   \end{align}
where, under the incompressibility condition, 
\begin{align}
	f=\bar{\psi} \bar{\phi}\nabla \cdot \mathcal{P}
	\bigl(
	( \bar{w} \otimes \bar{w})
	+
	\zeta
	( w \otimes \bar{w})
	+
	\zeta
	( \bar{w} \otimes w)
	\bigr)
	\llabel{EQ41}
\end{align}
and
\begin{align}
	g=\bar{\psi}  \bar{\phi} \zeta
	\left( \sigma(t, \bar{w}+w)
	-
	\sigma(t, w)\right).
	\llabel{EQ42}
\end{align}

We now set $v^{(-1)}=0$ and show that all $v^{(m)}$ are well-defined,
   starting with $m=0$.
Since $\theta(0)=1$, the initial value problem~\eqref{EQ40} reduces to
   \begin{align}
	\begin{split}
		(\partial_t  -\Delta) v^{(0)}
		&=
		-
		f +g
		\dot{\WW}(t),
		\\   \nabla\cdot v^{(0)}( t,x) &= 0
		,
		\\
		v^{(0)}( 0,x)&=v_{0} (x)  \Pas
	\commaone x\in\TT^3 \commaone t\in [0,T],
	\end{split}
	\label{EQ43}
\end{align}
when $m=0$. First,
\begin{equation*}
\mathbb{E}\Vert v^{(0)}(0,\cdot)\Vert_{H^{\frac{1}{2}+\delta}}^2
=
\mathbb{E}\Vert v_0\Vert_{H^{\frac{1}{2}+\delta}}^2
< \infty.
\end{equation*}
Furthermore, by the Sobolev estimate~\eqref{EQ23}, 
 \begin{align}
 	 \begin{split}
	\int_{0}^{T}\Vert f\Vert_{H^{{\delta - \frac12}}}^2 \,dt
	&\lec 		
\int_{0}^{T}\bar{\psi}
\Vert\bar{w}\Vert_{H^{\frac{1}{2}+\delta}}^{1+\delta}
\Vert \bar{w}\Vert_{H^{\frac{3}{2}+\delta}}^{1-\delta}
\left(
\Vert\bar{w}\Vert_{H^{\frac{1}{2}+\delta}}^{1+\delta}
\Vert \bar{w}\Vert_{H^{\frac{3}{2}+\delta}}^{1-\delta}
	+
\zeta
	\Vert w\Vert_{H^{\frac{1}{2}+\delta}}^{1+\delta}
	\Vert w\Vert_{H^{\frac{3}{2}+\delta}}^{1-\delta}
	\right) \,dt 
	\\	&\lec _k \int_{0}^{T}\left(
	\bar{\psi}\Vert \bar{w}\Vert_{H^{\frac{3}{2}+\delta}}^{2-2\delta}
	+
	\zeta 
	\Vert w\Vert_{H^{\frac{3}{2}+\delta}}^{2-2\delta}
	\right) \,dt 
	\lec_{k}
	T^{\delta}
	,\Pas
	 \end{split}\label{EQ46}
\end{align}
where in the last step we have used the Cauchy-Schwarz inequality 
and the properties of the cutoff functions.
Lastly, we may bound $g$ using the assumption~\eqref{EQ04} on the noise coefficient as
	 \begin{align}
	 	\int_{0}^{T}\Vert g\Vert_{\mathbb{H}^{\frac{1}{2}+\delta}}^2 \,dt
	 	\lec
	 	 \int_{0}^{T}\bar{\psi}^2\Vert \bar{w}\Vert_{H^{\frac{1}{2}+\delta}}^2 \,dt
	 	  \lec_k T, \Pas
	 	  \label{EQ48}
	 \end{align}
Therefore, invoking Lemma~\ref{L01} componentwise, we conclude that the system~\eqref{EQ43}
has a unique solution $v^{(m)}$ in $L^2(\Omega; C([0,T], H^{\frac{1}{2} +\delta}))\cap L^2(\Omega; L^2([0,T], H^{\frac{3}{2} +\delta}))$
satisfying
	\begin{align}
			\EE\biggl[\sup_{0\leq t\leq T}\Vert v^{(0)}(t,\cdot)\Vert_{H^{\frac{1}{2} +\delta}}^2
			+\int_0^{T} \Vert v^{(0)}(t,\cdot)\Vert_{H^{\frac{3}{2}+\delta}}^2 dt\biggr]
			\le C(1+T^{\delta}+T),
		\label{EQ49}
	\end{align}
where the constant $C$ on the right-hand side of \eqref{EQ49} depends on our hidden index~$k$.

We proceed inductively to justify the existence of each subsequent $v^{(m)}$ and establish its energy bound. Observe that the same bound as in~\eqref{EQ49} applies to all $v^{(m)}$, since $0\leq \psi^{(m-1)},\phi^{(m-1)}\leq 1$ for all $m \in \mathbb{N}_0$. In addition, the divergence-free condition
\eqref{EQ40}$_2$, which holds $\mathbb{P}$-almost surely on $\mathbb{T}^3 \times [0,T]$, is preserved throughout this process. This follows from applying $\div$ to both sides of~\eqref{EQ40}$_1$ and \eqref{EQ40}$_3$
and using that $v_0$, $f$, and $g$ are divergence-free.
Since $\nabla \cdot v^{(m)}$
solves the homogeneous deterministic heat equation
with the zero initial datum, it must be zero. The average-free property is a consequence of the equation's structure and the assumptions on $v_0$, $\sigma$, $w$, and~$\bar{w}$.

Next, we move on to the contraction mapping argument. Clearly, $V^{(m)}:= v^{(m)}-v^{(m-1)}$ is a solution to
 \begin{align}
	\begin{split}
		(\partial_t  -\Delta) V^{(m)}
		&=
		-
		(\Psi^{(m-1)} \phi^{(m-1)} + \psi^{(m-2)} \Phi^{(m-1)})
		(f -g
		\dot{\WW}(t)
		),
		\\   \nabla\cdot V^{(m)}( t,x) &= 0
		,
		\\
		V^{(m)}( 0,x)&=0  \Pas
		\commaone x\in\TT^3 \commaone t\in [0,T]
		,
	\end{split}
	\llabel{EQ50}
\end{align}
where 
	\begin{align*}
	\Psi^{(m-1)}:=\psi^{(m-1)}-\psi^{(m-2)}
	\andand
	\Phi^{(m-1)}:=\phi^{(m-1)}-\phi^{(m-2)}.
\end{align*}
Employing the estimates~\eqref{EQ46}--\eqref{EQ48} and Lemma~\ref{L01} componentwise, we conclude that
	\begin{align}
		\begin{split}
	\EE&\biggl[\Vert V^{(m)}\Vert_{C([0,T_0],H^{\frac{1}{2}+\delta})}^2
	+\Vert V^{(m)}\Vert_{L^2_{T_0} H^{\frac{3}{2}+\delta}}^2\biggr]
	\\&\lec
	 \mathbb{E} \biggl[\int_{0}^{T_0} 
	  \left((\Psi^{(m-1)})^2+(\Phi^{(m-1)})^2\right)
	   (\Vert f\Vert_{H^{{\delta - \frac12}}}^2+\Vert g\Vert_{\mathbb{H}^{\frac{1}{2}+\delta}}^2) \,dt\biggr]
       \\&
	\lec_k
	(T_0+T_0^{\delta})\mathbb{E}\biggl[\Vert V^{(m-1)}\Vert_{C([0,T_0], H^{\frac12 +\delta})}^2
	+
	\Vert V^{(m-1)}\Vert_{L^2_{T_0} H^{\frac{3}{2}+\delta}}^2
	\biggr] 
        ,
	\label{EQ51}
	\end{split}
\end{align}
for all $T_0\in [0,T]$. Setting $T_0>0$ sufficiently small, we have
\begin{align}
	\EE\biggl[\Vert V^{(m)}\Vert_{C([0,T_0], H^{\frac{1}{2}+\delta})}^2
	+\Vert V^{(m)}\Vert_{L^2_{T_0}H^{\frac{3}{2}+\delta}}^{\frac{3}{2}+\delta}\biggr]
	\le
	\frac12
	\EE\biggl[\Vert V^{(m-1)}\Vert_{C([0,T_0], H^{\frac{1}{2}+\delta})}^2
	+\Vert V^{(m-1)}\Vert_{L^2_{T_0}H^{\frac{3}{2}+\delta}}^2\biggr]
	,
	\llabel{EQ52}
\end{align}
which implies that $v^{(m)}$ converges to some function
  \begin{equation*}
   v\in L^2(\Omega; C([0,T_0], H^{\frac{1}{2}+\delta}))\cap L^2(\Omega; L^2([0,T_0], H^{\frac{3}{2}+\delta}))   
   .
  \end{equation*}
Now, we take the limit in the (probabilistically) strong formulation
\begin{align}
	\begin{split}
		(v^{(m)}( t),\varphi)
		=& ( v_0,\varphi)+\int_0^t \bigl((v^{(m)}, \Delta \varphi) 
		   - ( \phi^{(m-1)}\psi^{(m-1)}f, \varphi)\bigr)\,ds
		    \\&+\int_0^t (\phi^{(m-1)}\psi^{(m-1)}g,\varphi)\,d\WW_s
		\comma (t,\omega)\text{-a.e.}
	\end{split}
	\llabel{EQ54}
\end{align}
and the incompressibility identity 
\begin{align}
	\int_0^t (v^{(m)},\nabla \varphi) \,ds = 0
	\comma (t,\omega)\text{-a.e.},
	\llabel{EQ55}
\end{align}
where $\varphi \in C^\infty(\mathbb{T}^3)$ and $t\in[0, T_0]$.
Using the dominated convergence theorem and the exponential rate of convergence,
we conclude that
\begin{align}
	(v^{(m)}(t), \varphi)+ \int_0^t (v^{(m)}, \Delta \varphi)\,ds 
	 \to
	  (v(t), \varphi)+ \int_0^t (v, \Delta \varphi)\,ds 
	   \comma (t,\omega)\text{-a.e.}
	    \llabel{EQ56}
\end{align}
and
\begin{align}
	\int_0^t (v^{(m)},\nabla \varphi) \,ds 
	\to \int_0^t (v,\nabla \varphi) \,ds
	   \comma (t,\omega)\text{-a.e.},
	\llabel{EQ60}
\end{align}
as $m \to \infty$.
For the nonlinear terms, recalling \eqref{EQ36},
we write
\begin{align}
	\mathbb{E}\biggl[\sup_{t}|\psi^{(m)}-\psi|^2\biggr]
	 \lec
	  \mathbb{E}\biggl[
	   \Vert v^{(m)}-v\Vert_{C_{T_0}H^{\frac12 +\delta}\cap L^2_{T_0}H^{\frac{3}{2}+\delta}}^2\biggr]
	   \llabel{EQ57}
\end{align}
and
\begin{align}
	\mathbb{E}\biggl[\sup_{t}|\phi^{(m)}-\phi|^2\biggr]
	\lec
	\EE \biggl[\Vert v^{(m)}-v\Vert_{C_{T_0}H^{\frac12}\cap L^2_{T_0}H^\frac{3}{2}}^2\biggr]
	,\llabel{EQ58}
\end{align}
from where it follows that
\begin{align}
	\psi^{(m)} \to \psi \in L^2(\Omega; L^\infty_{T_0}) 
     \andand
	 \phi^{(m)} \to \phi \in L^2(\Omega; L^\infty_{T_0}).
	 \llabel{EQ59}
\end{align}
Utilizing this, we obtain 
\begin{align}
	\mathbb{E} \biggl[\biggl| 
	  \int_0^{T_0} \left( (\phi^{(m-1)}\psi^{(m-1)}-\phi \psi)f, \varphi\right)\,ds
	   \biggr| \biggr]
	    \lec_k
	     \mathbb{E}\bigl[\sup_{t}\bigl(
	      |\psi^{(m-1)} - \psi|+|\phi^{(m-1)} - \phi|
	                             \bigr)
	               \bigr]
	      \to 0
   ,
	      \llabel{EQ61}
\end{align}
as $m \to \infty$. 
Finally, using the Burkholder-Davis-Gundy inequality, we get
\begin{align}
	\begin{split}
	\mathbb{E}&\biggl[\sup_{t}
 	 \biggl|
	  \int_0^t \left((\phi^{(m-1)}\psi^{(m-1)}-\phi \psi )g,\varphi\right)\,d\WW_s
	  \biggr|  \biggr]
	   \\&
	   \lec
	    \mathbb{E}\biggl[
	      \biggl(
	     \int_0^{T_0} (\phi^{(m-1)}\psi^{(m-1)}-\phi \psi )^2 \Vert g\Vert_{\mathbb{L}^{^2}}^2 \,dt
	      \biggr)^{1/2}\biggr]
         \\&
	      \lec_k
	     \mathbb{E}\biggl[\sup_{t}\biggl(
	     |\psi^{(m-1)} - \psi|+|\phi^{(m-1)} - \phi|\biggr)\biggr]
	     \to 0, 
	     \llabel{EQ62}
\end{split}
\end{align}
as $m \to \infty$.
This establishes the existence of a strong solution 
to \eqref{EQ37} on $[0,T_0]$.
We note that the smallness of $T_0>0$ does not depend on the initial datum.
Therefore, by repeating the steps a finite number of times, we establish the existence of a strong solution on $[0,T]$.

To prove the uniqueness, we assume that $v$ and $\tilde{v}$ are two solutions of~\eqref{EQ37}
on $[0,T]$, with their corresponding cutoffs denoted by $(\psi,\phi)$
and $(\tilde{\psi},\tilde{\phi})$, respectively.
Then, their difference $V=v-\tilde{v}$ solves
\begin{align}
	\begin{split}
		(\partial_t  -\Delta) V
		&
	    =
		-
		(\Psi \phi + \tilde{\psi}\Phi) f
		+
		(\Psi \phi + \tilde{\psi} \Phi) g
		\dot{\WW}(t),
		\\
		\nabla\cdot V (t,x)
		&
		= 0
		,
		\\
		V( 0,x)
		&
		=0  \Pas
		\commaone x\in\TT^3 \commaone t\in [0,T]
		,
	\end{split}
	\llabel{EQ63}
\end{align}
where $\Psi:=\psi-\tilde{\psi}$ and	$\Phi:=\phi-\tilde{\phi}$. Following the approach leading to~\eqref{EQ51}, we claim
\begin{align}
	\EE\Bigl[\Vert V\Vert_{C([0,T_0], H^{\frac{1}{2}+\delta})}^2
	+\Vert V\Vert_{L^2_{T_0}H^{\frac{3}{2}+\delta}}^2
	    \Bigr]
	\le
	\frac{1}{2}
	\EE\Bigl[\Vert V\Vert_{C([0,T_0], H^{\frac{1}{2}+\delta})}^2
	+\Vert V\Vert_{L^2_{T_0}H^{\frac{3}{2}+\delta}}^2\Bigr]
      ,
	\llabel{EQ64}
\end{align}
for a positive but sufficiently small~$T_0$. This establishes the pathwise uniqueness of the solution on $[0,T_0]$, and then on $[0,T]$, by repeating the argument on subsequent subintervals, thus concluding the proof.
\end{proof}

Now, we present the existence and uniqueness result for~\eqref{EQ35}. Recall the notation introduced in~\eqref{EQ33}, \eqref{EQ34}, \eqref{EQ36}, and~\eqref{EQ39}. 

\cole
\begin{Lemma}
\label{LM02}
Let $T>0$, $k\in\mathbb{N}_0$, and
$w \in L^2(\Omega; C([0,T], H^{\frac{1}{2}+\delta}))\cap L^2(\Omega; L^2([0,T],H^{\frac{3}{2}+\delta}))$
be as above. Suppose that $v_0$ and $w$ are divergence and average-free. Then,
the system \eqref{EQ35} has a unique strong solution
$v \in L^2(\Omega; C([0,T], H^{\frac{1}{2}+\delta}))\cap L^2(\Omega; L^2([0,T], H^{\frac{3}{2}+\delta}))$ that is divergence and average-free.
\end{Lemma}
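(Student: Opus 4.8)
The plan is to realize \eqref{EQ35} as the fixed point of the map implicitly defined by \eqref{EQ37}. Fix $k\in\NNz$ and the data $w=u^{(k-1)}$. Observe that substituting $\bar{w}=v$ into \eqref{EQ37} gives $\bar{\psi}=\psi(v)=\psi$ and $\bar{\phi}=\phi(v)=\phi$, so the coefficient $\psi\bar{\psi}\phi\bar{\phi}$ becomes $\psi^2\phi^2$ and \eqref{EQ37} collapses to \eqref{EQ35}. Accordingly, set $z^{(0)}=0$ and, for $j\ge1$, let $z^{(j)}$ be the unique solution of \eqref{EQ37} with $\bar{w}$ replaced by $z^{(j-1)}$, which exists in $L^2(\Omega; C([0,T], H^{\frac{1}{2}+\delta}))\cap L^2(\Omega; L^2([0,T],H^{\frac{3}{2}+\delta}))$ and is divergence- and average-free by Lemma~\ref{LM01}. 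A fixed point $z^{(j)}\to v$ of this iteration solves \eqref{EQ35}.

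The core of the proof is a contraction estimate on a short interval $[0,T_0]$, with $T_0$ depending only on $M_k$, $\bar{\epsilon}$, and $w$ (not on the iterates, thanks to the cutoffs). Writing $V=z^{(j)}-z^{(j-1)}$, the difference solves a linear stochastic heat equation whose forcing and noise coefficient are the differences of the right-hand side of \eqref{EQ37} taken with $\bar{w}=z^{(j-1)}$ versus $\bar{w}=z^{(j-2)}$; these split into three kinds. First, the differences of the bilinear terms $\mathcal{P}(\bar{w}\otimes\bar{w})$, $\mathcal{P}(w\otimes\bar{w})$, $\mathcal{P}(\bar{w}\otimes w)$: writing $a\otimes a-b\otimes b=(a-b)\otimes a+b\otimes(a-b)$ and using the Sobolev product inequality \eqref{EQ23}, the $H^{\frac12+\delta}$ factors are absorbed by the cutoffs $\bar{\psi},\bar{\phi}$ while the $H^{\frac32+\delta}$ factors are integrated in time by H\"older as in \eqref{EQ46}, producing a gain $T_0^{\delta}$. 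Second, the differences of the cutoffs $\bar{\psi}=\psi(\bar{w})$ and $\bar{\phi}=\phi(\bar{w})$: since $\theta$ is Lipschitz,
\begin{align*}
|\psi(z^{(j-1)})(t)-\psi(z^{(j-2)})(t)|
\lesssim
\frac{1}{M_k}\Bigl(\|V(t)\|_{H^{\frac12+\delta}}+\Bigl(\int_0^t\|V(s)\|_{H^{\frac32+\delta}}^2\,ds\Bigr)^{1/2}\Bigr),
\end{align*}
where the triangle inequality in $L^2_tH^{\frac32+\delta}$ handles the dissipation integral newly present in \eqref{EQ33}, and similarly for $\bar{\phi}$; these are multiplied by forcings that are bounded once the cutoffs are in force. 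Third, the differences $\psi(z^{(j)})\phi(z^{(j)})-\psi(z^{(j-1)})\phi(z^{(j-1)})$ arising because \eqref{EQ37} retains $\psi=\psi(v)$, $\phi=\phi(v)$ with $v=z^{(j)}$: these are bounded as above by $V$ itself, but again multiplied by a forcing small in time, hence contribute $C(T_0+T_0^{\delta})\|V\|$. The noise difference is controlled directly by \eqref{EQ04} with $\alpha=\delta$ and \eqref{EQ05}, giving a $T_0$ gain. Applying Lemma~\ref{L01} componentwise together with the Burkholder-Davis-Gundy inequality, collecting these bounds, and absorbing the self-referential part into the left-hand side for $T_0$ small yields
\begin{align*}
&\EE\Bigl[\|V\|_{C([0,T_0],H^{\frac12+\delta})}^2+\|V\|_{L^2_{T_0}H^{\frac32+\delta}}^2\Bigr]
\\&\indeq\le\frac12\,\EE\Bigl[\|z^{(j-1)}-z^{(j-2)}\|_{C([0,T_0],H^{\frac12+\delta})}^2+\|z^{(j-1)}-z^{(j-2)}\|_{L^2_{T_0}H^{\frac32+\delta}}^2\Bigr].
\end{align*}

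Hence $z^{(j)}$ converges geometrically to some $v$ in $L^2(\Omega;C([0,T_0],H^{\frac12+\delta}))\cap L^2(\Omega;L^2([0,T_0],H^{\frac32+\delta}))$. Passing to the limit in the strong formulation of \eqref{EQ37} is then carried out exactly as in the proof of Lemma~\ref{LM01}: the linear and incompressibility terms pass by dominated convergence, the deterministic nonlinear terms by the convergence of $z^{(j)}$ and the continuity of $\bar{w}\mapsto\psi(\bar{w}),\phi(\bar{w})$ in $L^2(\Omega;L^\infty_{T_0})$ furnished by the Lipschitz bound above, and the stochastic term by Burkholder-Davis-Gundy. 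The limit $v$ solves \eqref{EQ35} on $[0,T_0]$; it is divergence-free (apply $\div$ and note $\div v$ solves the homogeneous heat equation with zero data) and average-free by the structure of \eqref{EQ35} and the hypotheses on $v_0$, $w$, and $\sigma$. Since $T_0$ is independent of the data, restarting the construction at $T_0,2T_0,\ldots$ produces a solution on all of $[0,T]$. Uniqueness follows the same scheme: if $v,\tilde{v}$ both solve \eqref{EQ35}, then $V=v-\tilde{v}$ solves a linear equation to which the estimates of the three kinds above apply, yielding $\EE[\|V\|_{C([0,T_0],H^{\frac12+\delta})}^2+\|V\|_{L^2_{T_0}H^{\frac32+\delta}}^2]\le\frac12\EE[\|V\|_{C([0,T_0],H^{\frac12+\delta})}^2+\|V\|_{L^2_{T_0}H^{\frac32+\delta}}^2]$, hence $V\equiv0$ on $[0,T_0]$ and then, inductively, on $[0,T]$.

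The step I expect to be the main obstacle is the contraction estimate above, in particular keeping the three kinds of differences in balance: one must simultaneously use the cutoffs to render the Sobolev products of \eqref{EQ23} integrable in time with a positive power of $T_0$, exploit the Lipschitz dependence of $\psi,\phi$ on the iterate including the new $L^2_tH^{\frac32+\delta}$ contribution in \eqref{EQ33}, and verify that the self-referential term $\psi(z^{(j)})\phi(z^{(j)})-\psi(z^{(j-1)})\phi(z^{(j-1)})$ can be absorbed on the left for $T_0$ small --- all with $T_0$ uniform in the data so that the short-time solution continues to $[0,T]$.
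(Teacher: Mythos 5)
Your proposal is correct and follows essentially the same route as the paper: iterate \eqref{EQ37} with $\bar{w}$ replaced by the previous iterate (so that Lemma~\ref{LM01} supplies each step), prove a contraction on a short interval $[0,T_0]$ by splitting the differences into the bilinear parts, the Lipschitz differences of the cutoffs, and the self-referential cutoff differences that must be absorbed into the left-hand side for $T_0$ small, then pass to the limit in the strong formulation and iterate in time since $T_0$ is uniform in the data. The only cosmetic deviation is the choice of the initial iterate ($0$ rather than the heat flow of $v_0$), which merely shifts the index by one.
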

\colb

\begin{proof}[Proof of Lemma~\ref{LM01}]
Consider the iteration for~\eqref{EQ35}, which reads
\begin{align}
	\begin{split}
		(\partial_t -\Delta) v^{(m)}
		=&	f^{(m)} + g^{(m)}\dot{\WW}(t)
                ,
		\\   \nabla\cdot v^{(m)}( t,x) =& 0
		,
		\\
		v^{(m)}( 0,x)=& v_{0} (x)  \Pas
		\commaone x\in\TT^3 \commaone t\in [0,T]
		,
	\end{split}
	\label{EQ65}
\end{align}
where $m \in \mathbb{N}$, 
\begin{align}
	\begin{split}
	f^{(m)}
		=&
		-
		\psi^{(m)} \psi^{(m-1)} \phi^{(m)}\phi^{(m-1)}
		\nabla\cdot \mathcal{P}\bigl( v^{(m-1)} \otimes v^{(m-1)}\bigr)
		\\&-
		\psi^{(m)} \psi^{(m-1)} \phi^{(m)}\phi^{(m-1)}
		\zeta
		\nabla \cdot\mathcal{P}\bigl( w \otimes v^{(m-1)}\bigr)
		\\&-
		\psi^{(m)} \psi^{(m-1)} \phi^{(m)}\phi^{(m-1)}
		\zeta
		\nabla \cdot\mathcal{P}\bigl( v^{(m-1)}\otimes w\bigr)
          ,
	\end{split}
	\llabel{EQ66}
\end{align}
and
\begin{equation}	\llabel{EQ67}
	g^{(m)}=	\psi^{(m)} \psi^{(m-1)} \phi^{(m)}\phi^{(m-1)}
	\zeta\bigl( 
	\sigma(t, v^{(m-1)}+w) - \sigma(t, w)
	\bigr).
\end{equation}
We also set $v^{(0)}$ to be the solution to the incompressible homogeneous
heat equation with the initial datum~$v_0$. By Lemma~\ref{LM01}, the system~\eqref{EQ65} has a unique solution $v^{(m)}$ on $[0,T]$ for every $m \in \mathbb{N}$, and the solution is both, divergence-free and average-free. Moreover, the sequence $\{v^{(m)}\}$
is uniformly bounded
in~$L^2(\Omega; C([0,T], H^{\frac{1}{2}+\delta}))\cap L^2(\Omega;
L^2([0,T], H^{\frac{3}{2}+\delta}))$; this can be seen from their energy estimates
\begin{align}
	\begin{split}
	\EE\Bigl[&\Vert v^{(m)}\Vert_{C([0,T], H^{\frac{1}{2}+\delta})}^2
	+\Vert v^{(m)}\Vert_{L^2_TH^{\frac{3}{2}+\delta}}^2\Bigr]
	\\&\lec
	\EE \Bigl[\Vert v_0\Vert_{H^{\frac{1}{2}+\delta}}^2\Bigr]
	+
	\mathbb{E} \biggl[\int_{0}^{T} 
	(\Vert f^{(m)}\Vert_{H^{\delta - \frac12}}^2+\Vert g^{(m)}\Vert_{\mathbb{H}^{\frac{1}{2}+\delta}}^2) \,dt
	\biggr],
	\label{EQ69}
	\end{split}
\end{align}
as a result of Lemma~\ref{L01}.
Similarly to \eqref{EQ46} and \eqref{EQ48},
we have
\begin{align}
	\int_{0}^{T} 
	\Vert f^{(m)}\Vert_{H^{\delta - \frac12}}^2 \,dt
	   \lec_{k}
	   T^{\delta}
        \andand
	  \int_{0}^{T} 
	   \Vert g^{(m)}\Vert_{\mathbb{H}^{\frac{1}{2}+\delta}}^2 \,dt
	   \lec_{k}
	   T, \Pas
	   \llabel{EQ70}
\end{align}
Therefore, the right-hand side of \eqref{EQ69} is independent of~$m$.

We now show that $\{v^{(m)}\}$ has a limit. Denote
\begin{align}
	V^{(j)}=v^{(j)}-v^{(j-1)}
	\comma
	\Psi^{(j)}=\psi^{(j)}-\psi^{(j-1)}
      \comma \mbox{and }~
	\Phi^{(j)}=\phi^{(j)}-\phi^{(j-1)}
	\comma j\in\{m, m-1\}.
	\llabel{EQ71}
\end{align}
The difference $V^{(m)}$ solves 
\begin{align}
	\begin{split}
		(\partial_t -\Delta) V^{(m)}
		=&
		-F_{\Psi^{(m)}}
		-F_{\Phi^{(m)}}
		-F_{\Psi^{(m-1)}}
		-F_{\Phi^{(m-1)}}
		-F_{V^{(m-1)}}
		\\&+(
		G_{\Psi^{(m)}}
		+G_{\Phi^{(m)}}
		+G_{\Psi^{(m-1)}}
		+G_{\Phi^{(m-1)}}
		+G_{\Sigma^{(m-1)}}
		)\dot{\WW}(t)
		\\   \nabla\cdot V^{(m)}( t,x) =& 0
		,
		\\
		V^{(m)}( 0,x)=&0  \Pas
		\commaone x\in\TT^3  \commaone t\in [0,T]
		,
	\end{split}
	\label{EQ72}
\end{align}
where $F_{\Psi^{(m)}}$ and $F_{\Phi^{(m)}}$ are given by
\begin{align}
	\begin{split}
	F_{\Psi^{(m)}}
	 =&
	  \Psi^{(m)}\psi^{(m-1)} \phi^{(m)}\phi^{(m-1)}
	  \nabla \cdot\mathcal{P}\bigl( v^{(m-1)}\otimes v^{(m-1)}\bigr)
	  \\&+
	  \Psi^{(m)}\psi^{(m-1)} \phi^{(m)}\phi^{(m-1)}\zeta
	  \nabla \cdot\mathcal{P}\bigl( w\otimes v^{(m-1)}\bigr)
	  \\&+
	  \Psi^{(m)}\psi^{(m-1)} \phi^{(m)}\phi^{(m-1)} \zeta
	  \nabla \cdot\mathcal{P}\bigl( v^{(m-1)}\otimes w\bigr)
	  \llabel{EQ73}
	  \end{split}
\end{align}
and 
\begin{align}
	\begin{split}
		F_{\Phi^{(m)}}
		=&
		\Phi^{(m)}(\psi^{(m-1)})^2 \phi^{(m-1)}
		\nabla \cdot\mathcal{P}\bigl( v^{(m-1)}\otimes v^{(m-1)}\bigr)
		\\&+
		\Phi^{(m)}(\psi^{(m-1)})^2 \phi^{(m-1)}\zeta
		\nabla \cdot\mathcal{P}\bigl( w\otimes v^{(m-1)}\bigr)
		\\&+
		\Phi^{(m)}(\psi^{(m-1)})^2 \phi^{(m-1)} \zeta
		\nabla \cdot\mathcal{P}\bigl( v^{(m-1)}\otimes w\bigr)
		,
		\llabel{EQ74}
	\end{split}
\end{align}
the terms $F_{\Psi^{(m-1)}}$ and $F_{\Phi^{(m-1)}}$
read 
\begin{align}
	\begin{split}
		F_{\Psi^{(m-1)}}
		=&
		\Psi^{(m-1)}\psi^{(m-1)} (\phi^{(m-1)})^2
		\nabla \cdot \mathcal{P}\bigl( v^{(m-1)}\otimes v^{(m-1)}\bigr)
		\\&+
		\Psi^{(m-1)}\psi^{(m-1)} (\phi^{(m-1)})^2\zeta
		\nabla \cdot\mathcal{P}\bigl( w\otimes v^{(m-1)}\bigr)
		\\&+
		\Psi^{(m-1)}\psi^{(m-1)} (\phi^{(m-1)})^2 \zeta
		\nabla \cdot\mathcal{P}\bigl( v^{(m-1)}\otimes w\bigr)
		\llabel{EQ75}
	\end{split}
\end{align}
and
\begin{align}
	\begin{split}
		F_{\Phi^{(m-1)}}
		=&
		\Phi^{(m-1)}\psi^{(m-1)}\psi^{(m-2)} \phi^{(m-1)}
		\nabla \cdot\mathcal{P}\bigl( v^{(m-1)}\otimes v^{(m-1)}\bigr)
		\\&+
		\Phi^{(m-1)}\psi^{(m-1)}\psi^{(m-2)} \phi^{(m-1)}\zeta
		\nabla \cdot\mathcal{P}\bigl( w\otimes v^{(m-1)}\bigr)
		\\&+
		\Phi^{(m-1)}\psi^{(m-1)}\psi^{(m-2)} \phi^{(m-1)} \zeta
		\nabla \cdot \mathcal{P}\bigl(v^{(m-1)}\otimes w\bigr)
		,
		\llabel{EQ76}
	\end{split}
\end{align}
while
\begin{align}
	\begin{split}
		F_{V^{(m-1)}}
		=&
		\phi^{(m-1)}\phi^{(m-2)}\psi^{(m-1)}\psi^{(m-2)} 
		\nabla \cdot\mathcal{P}\bigl( V^{(m-1)}\otimes v^{(m-1)}\bigr)
		\\&+
		\phi^{(m-1)}\phi^{(m-2)}\psi^{(m-1)}\psi^{(m-2)} 
		\nabla \cdot\mathcal{P}\bigl( v^{(m-2)}\otimes V^{(m-1)}\bigr)
		\\&+		
		\phi^{(m-1)}\phi^{(m-2)}\psi^{(m-1)}\psi^{(m-2)} 
		\zeta
		 \nabla \cdot\mathcal{P}\bigl( w\otimes V^{(m-1)}\bigr)
		\\&+
		\phi^{(m-1)}\phi^{(m-2)}\psi^{(m-1)}\psi^{(m-2)} 
		\zeta
		\nabla \cdot\mathcal{P}\bigl( V^{(m-1)}\otimes w\bigr)
		;
		\llabel{EQ77}
	\end{split}
\end{align}
also, the five noise factors in~\eqref{EQ72}
are defined by
\begin{align}
	\begin{split}
		G_{\Psi^{(m)}}
		&=
		\Psi^{(m)}\psi^{(m-1)} \phi^{(m)}\phi^{(m-1)}\zeta
		( 
		\sigma(t, v^{(m-1)}+w) - \sigma(t, w)
		)
	,	\\
		G_{\Phi^{(m)}}
		&=
		\Phi^{(m)}(\psi^{(m-1)})^2 \phi^{(m-1)}\zeta
		( 
		\sigma(t, v^{(m-1)}+w) - \sigma(t, w)
		)
	,	\\
		G_{\Psi^{(m-1)}}
		&=
		\Psi^{(m-1)}\psi^{(m-1)} (\phi^{(m-1)})^2\zeta
		( 
		\sigma(t, v^{(m-1)}+w) - \sigma(t, w)
		)
     ,           \\
		G_{\Phi^{(m-1)}}
		&=
		\Phi^{(m-1)}\psi^{(m-1)} \psi^{(m-2)}\phi^{(m-1)}\zeta
		( 
		\sigma(t, v^{(m-1)}+w) - \sigma(t, w)
		)
	,	\\
		G_{\Sigma^{(m-1)}}
		&=
		\psi^{(m-1)}\psi^{(m-2)} \phi^{(m-1)}\phi^{(m-2)}\zeta
		( 
		\sigma(t, v^{(m-1)}+w) - \sigma(t, v^{(m-2)}+w)
		)
		.\llabel{EQ79}
	\end{split}
\end{align}
Following the same approach as in \eqref{EQ46}, \eqref{EQ48}, and \eqref{EQ51},
we conclude that, for all $T_0\in[0, T]$, 
\begin{align}
	\begin{split}
	\EE \biggl[&\int_0^{T_0}
	 (\Vert F_{\Psi^{(j)}}\Vert_{H^{\delta - \frac12}}^2
	  +\Vert F_{\Phi^{(j)}}\Vert_{H^{\delta - \frac12}}^2
	   +\Vert G_{\Phi^{(j)}}\Vert_{\mathbb{H}^{\frac{1}{2}+\delta}}^2
	    +\Vert G_{\Psi^{(j)}}\Vert_{\mathbb{H}^{\frac{1}{2}+\delta}}^2)\,ds\biggr]
	  \\& \lec_k
	   (T_0+T_0^{\delta})\mathbb{E}\Bigl[\Vert V^{(j)}\Vert_{C([0,T_0], H^{\frac12 +\delta})}^2
	   +
	   \Vert V^{(j)}\Vert_{L^2_{T_0} H^{\frac{3}{2}+\delta}}^2
	   \Bigr] 
	   \comma j\in\{m, m-1\}
	   .\label{EQ80}
	   \end{split}
\end{align}
In addition, note that
\begin{align}
	\int_{0}^{T_0}\Vert 	G_{\Sigma^{(m-1)}}\Vert_{\mathbb{H}^{\frac{1}{2}+\delta}}^2 \,dt
	\lec
	\int_{0}^{T_0}(\psi^{(m-1)}\psi^{(m-2)} )^2\Vert V^{(m-1)}\Vert_{H^{\frac{1}{2}+\delta}}^2 \,dt
	\lec_k T_0 \Vert V^{(m-1)}\Vert_{C([0,T_0], H^{\frac12 +\delta})}^2
	,
   \label{EQ44}
\end{align}
$\PP$-almost surely,
and, by the Sobolev estimate~\eqref{EQ23}, 
 \begin{align}
	\begin{split}
		&\quad\enskip  \int_{0}^{T_0}\Vert F_{V^{(m-1)}}\Vert_{H^{{\delta - \frac12}}}^2 \,dt
		\\ &\lec 		
	 \int_{0}^{T_0}\psi^{(m-1)}\psi^{(m-2)} 
			\Vert V^{(m-1)}\Vert_{H^{\frac{1}{2}+\delta}}^{2+2\delta}
		\left(
		\Vert v^{(m-1)}\Vert_{H^{\frac{3}{2}+\delta}}^{2-2\delta}
		+
		\Vert v^{(m-2)}\Vert_{H^{\frac{3}{2}+\delta}}^{2-2\delta}
		+
		\zeta
		\Vert w\Vert_{H^{\frac{3}{2}+\delta}}^{2-2\delta}
		\right) \,dt 
		\\	&\quad +
		 \int_{0}^{T_0}\psi^{(m-1)}\psi^{(m-2)} 
			\Vert V^{(m-1)}\Vert_{H^{\frac{3}{2}+\delta}}^{2-2\delta}
		\left(
		\Vert v^{(m-1)}\Vert_{H^{\frac{1}{2}+\delta}}^{2+2\delta}
		+
		\Vert v^{(m-2)}\Vert_{H^{\frac{1}{2}+\delta}}^{2+2\delta}
		+
		\zeta
		\Vert w\Vert_{H^{\frac{1}{2}+\delta}}^{2+2\delta}
		\right) \,dt 
			\\	&
			\lec_{k}
		T_0^{\delta} \Bigl(
		\Vert V^{(m-1)}\Vert_{C([0,T_0], H^{\frac12 +\delta})}^2
		+
	\Vert V^{(m-1)}\Vert_{L^2_{T_0} H^{\frac{3}{2}+\delta}}^2
               		\Bigr)
		,\Pas
	\end{split}\label{EQ68}
\end{align}
Together, \eqref{EQ44} and \eqref{EQ68} imply
\begin{align}
	\EE \biggl[\int_0^{T_0}
	(\Vert F_{V^{(m-1)}}\Vert_{H^{\delta - \frac12}}^2
	+\Vert G_{\Sigma^{(m-1)}}\Vert_{\mathbb{H}^{\frac{1}{2}+\delta}}^2)\,ds\biggr]
	\lec_k
	(T_0+T_0^{\delta})\EE \Bigl[\Vert V^{(m-1)}\Vert_{C_{T_0}H^{\frac{1}{2}+\delta} \cap L^2_{T_0}H^{\frac{3}{2}+\delta}}^2\Bigr]
	.\label{EQ81}
\end{align}
Combining \eqref{EQ80} and \eqref{EQ81}, we then obtain
\begin{align}
	\begin{split}
&	\bigl(1-C_k(T_0+T_0^{\delta})\bigr)\EE\Bigl[\Vert V^{(m)}\Vert_{C_{T_0}H^{\frac{1}{2}+\delta}}^2
	+\Vert V^{(m)}\Vert_{L^2_{T_0}H^{\frac{3}{2}+\delta}}^2\Bigr]
	\\&\qquad \le
	C_k(T_0+T_0^{\delta})
	\EE\Bigl[\Vert V^{(m-1)}\Vert_{C_{T_0}H^{\frac{1}{2}+\delta}}^2
	+\Vert V^{(m-1)}\Vert_{L^2_{T_0}H^{\frac{3}{2}+\delta}}^2\Bigr]
	,
	\llabel{EQ82}
	\end{split}
\end{align} 
which demonstrates that $\{ v^{(m)}\}$ converges to a function
  \begin{equation}
   v\in L^2(\Omega; C([0,T_0], H^{\frac{1}{2}+\delta}))\cap L^2(\Omega; L^2([0,T_0], H^{\frac{3}{2}+\delta}))   
   \llabel{EQ53}
  \end{equation}
at an exponential rate when $T_0$ is sufficiently small. With this exponential rate of convergence and the uniform energy bound, we can now take the limit in the identities
\begin{align}
	\begin{split}
		(v^{(m)}( t),\varphi)
		=& ( v_0,\varphi)+\int_0^t (v^{(m)}, \Delta \varphi)\,ds
		\\&
		-\int_0^t 
		 ( \psi^{(m)} \psi^{(m-1)} \phi^{(m)}\phi^{(m-1)}
		\mathcal{P}\bigl( v^{(m-1)} \otimes v^{(m-1)}\bigr), \nabla\varphi)\,ds
		\\&
		-\int_0^t 
		(
		\psi^{(m)} \psi^{(m-1)} \phi^{(m)}\phi^{(m-1)}
		\zeta
		\mathcal{P}\bigl( w \otimes v^{(m-1)}\bigr)
		, \nabla\varphi)\,ds
		\\&
		-\int_0^t 
		( \psi^{(m)} \psi^{(m-1)} \phi^{(m)}\phi^{(m-1)}
		\zeta
		\mathcal{P}\bigl( v^{(m-1)}\otimes w\bigr)
		, \nabla\varphi)\,ds
		\\&
		+\int_0^t (\psi^{(m)} \psi^{(m-1)} \phi^{(m)}\phi^{(m-1)}
		\zeta\bigl( 
		\sigma(s,v^{(m-1)}+w) - \sigma(s, w)
		\bigr)
		,\varphi)\,d\WW_s
    \end{split}
   \llabel{EQ47}          
    \end{align}
and
   \begin{align}
   \begin{split}
      \int_0^t (v^{(m)},\nabla \varphi) &\,ds = 0
		\comma (t,\omega)\text{-a.e.} \comma \varphi \in C^\infty(\mathbb{T}^3),
	\end{split}
	\llabel{EQ83}
\end{align}
and establish the existence of a (probabilistically) strong solution to \eqref{EQ35}
on $[0,T_0]$. Since the smallness of $T_0$ does not depend on the initial data, we can further extend this solution to one on $[0,T]$ by repeating the argument in finitely many steps. This part of the proof, as well as the proof of pathwise uniqueness, is nearly identical to the corresponding parts in Lemma~\ref{LM01}, and so we omit it here. This concludes the proof of Lemma~\ref{LM02}.
\end{proof}

\subsection{The global energy control and pointwise estimates}\label{sec.global}
Now, we bring back the index $k$ and rewrite \eqref{EQ35} in terms of
$v^{(k)}$, $u^{(k)}$,
and $u^{(k-1)}$,
\begin{align}
	\begin{split}
		\partial_t v^{(k)}  -\Delta v^{(k)}
		=&
		-
		(\psi_k \phi_k)^2
		\nabla \cdot \mathcal{P}
		\bigl( v^{(k)}\otimes v^{(k)}
		+
		 \zeta_{k-1}u^{(k-1)}\otimes v^{(k)}
		+
		 \zeta_{k-1}v^{(k)}\otimes u^{(k-1)}
		\bigr)\\&
		+
		(\psi_k \phi_k)^2 \zeta_{k-1}\bigl( 
		\sigma(t,v^{(k)}+u^{(k-1)}) - \sigma(t, u^{(k-1)})
		\bigr)
		\dot{\WW}(t),
		\\   \nabla\cdot v^{(k)}( t,x) =& 0
		,
		\\
		v^{(k)}( 0,x)=&v^{(k)}_{0} (x),  \Pas\commaone x\in\TT^3 \commaone t\in [0,T]
		.
	\end{split}
	\label{EQ85}
\end{align}
Lemma~\ref{LM02} states that, given $k$, $u^{(k-1)}$, and $v_0^{(k)}$, the initial value problem \eqref{EQ85} has a unique solution $v^{(k)}$ that exists globally on $[0,T]$. Recalling that $u^{(-1)}=0$, the proofs we presented in Lemma~\ref{LM01} and~\ref{LM02} work for the base step of the induction, i.e., when $k=0$. We provide estimates for this solution that are independent of both time and~$k$, starting with the $H^{\frac{1}{2}}$-estimate. Recall the definitions of the cutoff functions~\eqref{EQ33}--\eqref{EQ34} and the assumptions regarding the noise coefficient~\eqref{EQ04}. 

\cole
\begin{Lemma}[An $H^{\frac{1}{2}}$-energy control]
\label{L07}
Let $\epsilon>0$ and $k\in\mathbb{N}_0$. Suppose that $\vk_0$ is an initial datum of~\eqref{EQ85} satisfying the assertions in Lemma~\ref{L05}; additionally, assume that 
\begin{align}
	\sup_{\Omega\times [0,\infty)} \biggl[\Vert \ukm(t)\Vert_{H^{\frac12}}
	 +\biggl(\int_0^t \Vert \ukm\Vert_{H^{\frac{3}{2}}}^2\biggr)^\frac{1}{2}\,ds\biggr]
	  \leq\epsilon.
	   \label{EQ86}
\end{align}
If $\epsilon$, $\bar{\epsilon}$, and $\epsilon_{\sigma}$ are sufficiently small, 
then there exists a positive constant $C$,
independent of $k$, so that
\begin{align}
\EE \biggl[\sup_{t\in[0,\infty)} \Vert \vk(t)\Vert_{H^\frac12}^2
+\int_0^{\infty} \Vert \vk(t)\Vert_{H^{\frac{3}{2}}}^2 \,dt
\biggr]
\leq C\EE\Bigl[\Vert \vk_0\Vert_{H^\frac12}^2\Bigr].
\label{EQ87}
\end{align}
\end{Lemma}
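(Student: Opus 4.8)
The plan is an absorption (fixed‑point‑in‑norm) argument driven by It\^o's formula for the $H^{1/2}$‑energy of $\vk$, carried out on a finite interval $[0,T]$ with all constants independent of $T$ and $k$, and then letting $T\uparrow\infty$.

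First write~\eqref{EQ85} as $\partial_t\vk-\Delta\vk=-N_k+g_k\dot\WW(t)$, where
\begin{align*}
N_k&=(\psi_k\phi_k)^2\nabla\cdot\mathcal{P}\bigl(\vk\otimes\vk+\zeta_{k-1}\ukm\otimes\vk+\zeta_{k-1}\vk\otimes\ukm\bigr),\\
g_k&=(\psi_k\phi_k)^2\zeta_{k-1}\bigl(\sigma(t,\vk+\ukm)-\sigma(t,\ukm)\bigr).
\end{align*}
I would apply It\^o's formula to $\Vert\vk(t)\Vert_{H^{1/2}}^2=\Vert\Lambda^{1/2}\vk(t)\Vert_{L^2}^2$ (justified by the regularity of $\vk$ provided by Lemma~\ref{LM02} and the mollification argument in the proof of Lemma~\ref{L01}), use that $\vk$ is divergence‑ and average‑free, and invoke the Poincar\'e inequality $2\int_0^t\Vert\Lambda^{1/2}\nabla\vk\Vert_{L^2}^2\,ds\ge\int_0^t\Vert\vk\Vert_{H^{3/2}}^2\,ds$ (as in the step leading to~\eqref{EQ18}) to get, for all $t\in[0,T]$,
\begin{align*}
\Vert\vk(t)\Vert_{H^{1/2}}^2+\int_0^t\Vert\vk\Vert_{H^{3/2}}^2\,ds
&\le\Vert\vk_0\Vert_{H^{1/2}}^2+2\int_0^t\bigl|(\Lambda^{-1/2}N_k,\Lambda^{3/2}\vk)\bigr|\,ds\\
&\quad+\int_0^t\Vert g_k\Vert_{\mathbb{H}^{1/2}}^2\,ds+2\Bigl|\int_0^t(\Lambda^{1/2}g_k,\Lambda^{1/2}\vk)\,d\WW_s\Bigr|.
\end{align*}
The goal is to show that, after taking $\sup_{t\le T}$ and $\EE$, each term on the right past the first is $\le C_0(\bar\epsilon+\epsilon+\epsilon_\sigma)\,\EE\bigl[\sup_{t\le T}\Vert\vk\Vert_{H^{1/2}}^2+\int_0^T\Vert\vk\Vert_{H^{3/2}}^2\,ds\bigr]$ with $C_0$ independent of $k,T$; this manipulation is legitimate because the latter quantity is finite by Lemma~\ref{LM02}.

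For the drift term, the Riesz‑transform bound and the Sobolev product estimates~\eqref{EQ23}--\eqref{EQ24} control $\Vert\Lambda^{-1/2}N_k\Vert_{L^2}$ by the scalar cutoffs times $\Vert\vk\Vert_{H^{1/2}}\Vert\vk\Vert_{H^{3/2}}+\zeta_{k-1}\Vert\ukm\Vert_{H^{1/2}}^{1/2}\Vert\ukm\Vert_{H^{3/2}}^{1/2}\Vert\vk\Vert_{H^{1/2}}^{1/2}\Vert\vk\Vert_{H^{3/2}}^{1/2}$. On $\{\phi_k>0\}$ one has $\Vert\vk(t)\Vert_{H^{1/2}}\le2\bar\epsilon/2^k\le2\bar\epsilon$, so the self‑interaction contributes $\lec\bar\epsilon\int_0^t\Vert\vk\Vert_{H^{3/2}}^2\,ds$, absorbed on the left for small $\bar\epsilon$. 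For the two cross terms I would \emph{not} spend the factor $\Vert\vk\Vert_{H^{1/2}}^{1/2}$ against the cutoff; instead, bounding the cutoffs and $\zeta_{k-1}$ by $1$, using $\Vert\ukm(t)\Vert_{H^{1/2}}\le\epsilon$ from~\eqref{EQ86}, extracting $\bigl(\sup_{s\le t}\Vert\vk\Vert_{H^{1/2}}\bigr)^{1/2}$, applying H\"older in time with exponents $(4/3,4)$ together with $(\int_0^t\Vert\ukm\Vert_{H^{3/2}}^2\,ds)^{1/4}\le\epsilon^{1/2}$ from~\eqref{EQ86}, and Young's inequality,
\begin{align*}
&2\int_0^t(\psi_k\phi_k)^2\zeta_{k-1}\bigl(\Vert\ukm\otimes\vk\Vert_{H^{1/2}}+\Vert\vk\otimes\ukm\Vert_{H^{1/2}}\bigr)\Vert\vk\Vert_{H^{3/2}}\,ds\\
&\qquad\lec\epsilon\Bigl(\sup_{s\le t}\Vert\vk\Vert_{H^{1/2}}^2+\int_0^t\Vert\vk\Vert_{H^{3/2}}^2\,ds\Bigr),
\end{align*}
so the cross terms are also absorbed for small $\epsilon$.

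For the noise terms, \eqref{EQ04} with $\alpha=0$ and~\eqref{EQ05} give $\Vert g_k\Vert_{\mathbb{H}^{1/2}}\le\epsilon_\sigma\Vert\vk\Vert_{H^{1/2}}$, hence by Poincar\'e $\int_0^t\Vert g_k\Vert_{\mathbb{H}^{1/2}}^2\,ds\le\tfrac12\epsilon_\sigma^2\int_0^t\Vert\vk\Vert_{H^{3/2}}^2\,ds$, absorbed for small $\epsilon_\sigma$. For the stochastic integral, the $l^2(\mathcal{H},\mathbb{R})$‑norm of $(\Lambda^{1/2}g_k,\Lambda^{1/2}\vk)$ is $\le\Vert\vk\Vert_{H^{1/2}}\Vert g_k\Vert_{\mathbb{H}^{1/2}}\le\epsilon_\sigma\Vert\vk\Vert_{H^{1/2}}^2$, so by Burkholder--Davis--Gundy, Poincar\'e, and Young,
\begin{align*}
\EE\Bigl[\sup_{t\le T}2\Bigl|\int_0^t(\Lambda^{1/2}g_k,\Lambda^{1/2}\vk)\,d\WW_s\Bigr|\Bigr]
&\lec\epsilon_\sigma\,\EE\Bigl[\Bigl(\int_0^T\Vert\vk\Vert_{H^{1/2}}^4\,ds\Bigr)^{1/2}\Bigr]\\
&\lec\epsilon_\sigma\,\EE\Bigl[\sup_{t\le T}\Vert\vk\Vert_{H^{1/2}}^2+\int_0^T\Vert\vk\Vert_{H^{3/2}}^2\,ds\Bigr].
\end{align*}
Collecting the four estimates, choosing $\bar\epsilon,\epsilon,\epsilon_\sigma$ so small that $C_0(\bar\epsilon+\epsilon+\epsilon_\sigma)\le\tfrac12$, moving the absorbed terms to the left (possible since, by Lemma~\ref{LM02}, the left side is finite), and letting $T\uparrow\infty$ by monotone convergence yields~\eqref{EQ87} with $C=2$; for $k=0$ one has $\ukm=0$, the cross terms vanish, and the same argument handles the base step. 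The main obstacle is precisely the cross terms $\zeta_{k-1}\ukm\otimes\vk$ and $\zeta_{k-1}\vk\otimes\ukm$: being only linear in $\vk$ and carrying $\Vert\ukm\Vert_{H^{3/2}}$, a naive use of the cutoff smallness of $\Vert\vk\Vert_{H^{1/2}}$ leaves a spurious additive constant rather than a multiple of $\EE\Vert\vk_0\Vert_{H^{1/2}}^2$; the remedy is the interpolation/H\"older‑in‑time rearrangement above, which restores homogeneity of degree two in $\vk$ (one half‑power from $\sup_t\Vert\vk\Vert_{H^{1/2}}$, three half‑powers from $\int_0^t\Vert\vk\Vert_{H^{3/2}}^2\,ds$) while charging all the available smallness to $\ukm$ through~\eqref{EQ86}.
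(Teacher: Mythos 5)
Your proposal is correct and follows essentially the same route as the paper: the paper invokes Lemma~\ref{L01} componentwise for~\eqref{EQ85} and then bounds the same three contributions exactly as you do --- the self-interaction absorbed via the cutoff $\phi_k$ and~\eqref{EQ24}, the cross terms via~\eqref{EQ23} and the hypothesis~\eqref{EQ86}, and the noise via~\eqref{EQ04} plus Poincar\'e --- before absorbing everything for small $\bar\epsilon,\epsilon,\epsilon_\sigma$ uniformly in $k$ and $T$. The only cosmetic differences are that you rerun the It\^o/BDG argument inline rather than citing Lemma~\ref{L01}, and you distribute the powers in the cross terms by H\"older in time with exponents $(4/3,4)$ where the paper uses a pointwise Young splitting into $\Vert v\Vert_{H^{1/2}}^2\Vert u\Vert_{H^{3/2}}^2+\Vert u\Vert_{H^{1/2}}^2\Vert v\Vert_{H^{3/2}}^2$; both yield the same absorption.
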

\colb

\begin{proof}[Proof of Lemma~\ref{L07}]
With $T>0$ fixed,
we invoke Lemma~\ref{L01} for \eqref{EQ85} componentwise, obtaining
 \begin{align}
	\begin{split}
		&\EE\biggl[
		\sup_{0\leq t\leq T}
		\Vert v^{(k)}(t,\cdot)\Vert_{H^{\frac12}}^2
		- \Vert v_0^{(k)}\Vert_{H^{\frac12}}^2
		+ \int_0^{T} \Vert v^{(k)}\Vert_{H^{\frac{3}{2}}}^2 \,dt\biggr]
		\\&\indeq
		\lec 
		\EE\biggl[
		\int_0^{T} (\psi_k \phi_k)^2 \Vert v^{(k)}\otimes v^{(k)}\Vert_{H^{\frac12}}^2
		\,dt\biggr]
		+\EE \biggl[\int_0^{T} (\psi_k \phi_k)^2\zeta_{k-1}\Vert v^{(k)}\otimes u^{(k-1)} \Vert_{H^\frac12}^2
		\,dt\biggr]
		\\&\indeq\indeq
		+
		\EE \biggl[\int_0^{T}\Vert (\psi_k \phi_k)^2 \zeta_{k-1}( 
		               \sigma(t,v^{(k)}+u^{(k-1)}) - \sigma(t, u^{(k-1)})
		                                                                )\Vert_{\mathbb{H}^\frac12}^2\,dt\biggr]
		 = I_1 + I_2 + I_3
		.
	\end{split}
	\label{EQ88}
\end{align}
As an immediate consequence of~\eqref{EQ24} and \eqref{EQ34}, we have 
\begin{align}
	I_1
	 \lec
	  \bar{\epsilon}
	    \EE\biggl[
	    \int_0^{T} \Vert v^{(k)}\Vert_{H^{\frac{3}{2}}}^2
	    \,dt\biggr]
	  .\label{EQ89}
	\end{align}
For $I_2$, we apply the Sobolev estimate~\eqref{EQ23} with $\delta = 0$ and write
\begin{align}
	\begin{split}
	I_2
	 &\lec 
	  \EE\biggl[
	  \int_0^{T} (\psi_k \phi_k)^2\zeta_{k-1}
	  \Vert v^{(k)}\Vert_{H^{\frac{1}{2}}}\Vert v^{(k)}\Vert_{H^{\frac{3}{2}}}
	              \Vert u^{(k-1)}\Vert_{H^{\frac{1}{2}}}\Vert u^{(k-1)}\Vert_{H^{\frac{3}{2}}}
	  \,dt\biggr]
	  \\& \lec
	    \EE\biggl[
	   \int_0^{T} (\phi_k)^2\Vert v^{(k)}\Vert_{H^{\frac{1}{2}}}^2 \Vert u^{(k-1)}\Vert_{H^{\frac{3}{2}}}^2
	   \,dt\biggr]
	   +
	    \EE\biggl[
	   \int_0^{T} \Vert u^{(k-1)}\Vert_{H^{\frac{1}{2}}}^2 \Vert v^{(k)}\Vert_{H^{\frac{3}{2}}}^2
	   \,dt\biggr] = I_{21}+I_{22}
	   .\label{EQ90}
	   \end{split} 
\end{align}
By the smallness assumption~\eqref{EQ86},
\begin{align}
	I_{21}
	 \lec 
	  \EE\biggl[\sup_t \Vert v^{(k)}\Vert_{H^{\frac{1}{2}}}^2 
	  \int_0^{T} \Vert u^{(k-1)}\Vert_{H^{\frac{3}{2}}}^2
	  \,dt\biggr]
	  \lec \epsilon^2
	   \EE\biggl[\sup_t \Vert v^{(k)}\Vert_{H^{\frac{1}{2}}}^2\biggr]
	   \label{EQ91}
\end{align}
and 
\begin{align}
	I_{22}
	 \lec 
	  \epsilon^2 \EE\biggl[
	  \int_0^{T} \Vert v^{(k)}\Vert_{H^{\frac{3}{2}}}^2
	  \,dt\biggr]
	  .\label{EQ92}
\end{align}
From~\eqref{EQ04}, we derive 
\begin{align}
	I_3
	 \lec
	   \epsilon_{\sigma}^2
	   \EE\biggl[
	   \int_0^{T} \Vert v^{(k)}\Vert_{H^{\frac{1}{2}}}^2
	   \,dt\biggr]
	 \lec
	  \epsilon_{\sigma}^2
	 \EE\biggl[
	 \int_0^{T} \Vert v^{(k)}\Vert_{H^{\frac{3}{2}}}^2
	 \,dt\biggr]
	   .\label{EQ93}
\end{align}
Finally, we collect \eqref{EQ88}--\eqref{EQ93}, assuming that 
$\epsilon$, $\bar{\epsilon}$, and $\epsilon_{\sigma}$ are
sufficiently small. Since the required smallness does not depend on $k$ or $T$,
we arrive at~\eqref{EQ87}.
\end{proof}  

Additionally, the $H^{\frac12 +\delta}$ bounds are uniform in both~$k$ and~$T$. 

\cole
\begin{Lemma}[An $H^{\frac{1}{2}+\delta}$ energy control]
	\label{LM04}
	Let $\epsilon>0$ and $k\in\mathbb{N}_0$. Suppose that $\vk_0$ is an initial datum of~\eqref{EQ85} satisfying the assertions in Lemma~\ref{L05} and $u^{(k-1)}$ satisfies~\eqref{EQ86}.
	Then, there exists a positive constant $C$, independent of $k$, such that 
	\begin{align}
		\EE \biggl[\sup_{t\in[0,\infty)} \Vert \vk(t)\Vert_{H^{\frac{1}{2}+\delta}}^2
		+\int_0^{\infty} \Vert \vk(t)\Vert_{H^{\frac{3}{2}+\delta}}^2 \,dt
		\biggr]
		\leq C\EE\Bigl[\Vert \vk_0\Vert_{H^{\frac{1}{2}+\delta}}^2\Bigr]
		\label{EQ94}
	\end{align}
if $\epsilon$, $\bar{\epsilon}$, and $\epsilon_{\sigma}$ are sufficiently small.
\end{Lemma}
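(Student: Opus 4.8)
The plan is to repeat the scheme of Lemma~\ref{L07}, but now at the regularity level $H^{\frac12+\delta}$, using Lemma~\ref{L01} with $\alpha=\delta$ in place of $\alpha=0$. Fix $T>0$ and apply Lemma~\ref{L01} componentwise to~\eqref{EQ85}; this controls $\EE\bigl[\sup_{0\le t\le T}\Vert v^{(k)}(t)\Vert_{H^{\frac12+\delta}}^2-\Vert v_0^{(k)}\Vert_{H^{\frac12+\delta}}^2+\int_0^T\Vert v^{(k)}\Vert_{H^{\frac32+\delta}}^2\,dt\bigr]$ by $C(I_1+I_2+I_3)$, where (with all integrals over $[0,T]$ and inside $\EE$) the term $I_1$ carries the self-interaction $(\psi_k\phi_k)^2\Vert v^{(k)}\otimes v^{(k)}\Vert_{H^{\frac12+\delta}}^2$, the term $I_2$ the two cross interactions $(\psi_k\phi_k)^2\zeta_{k-1}\Vert v^{(k)}\otimes u^{(k-1)}\Vert_{H^{\frac12+\delta}}^2$ and $(\psi_k\phi_k)^2\zeta_{k-1}\Vert u^{(k-1)}\otimes v^{(k)}\Vert_{H^{\frac12+\delta}}^2$, and $I_3$ the noise difference $(\psi_k\phi_k)^4\zeta_{k-1}^2\Vert\sigma(t,v^{(k)}+u^{(k-1)})-\sigma(t,u^{(k-1)})\Vert_{\mathbb H^{\frac12+\delta}}^2$. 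For $k=0$ one has $u^{(-1)}=0$ and $\zeta_{-1}=1$, so $I_2$ is absent and the rest of the argument specializes directly.

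For $I_1$, I would use the product inequality~\eqref{EQ23} with $\alpha=\delta$ and $v=w=v^{(k)}$, which gives $\Vert v^{(k)}\otimes v^{(k)}\Vert_{H^{\frac12+\delta}}^2\lesssim\Vert v^{(k)}\Vert_{H^{1+\frac\delta2}}^4$, and then interpolate $H^{1+\frac\delta2}$ between $H^{\frac12}$ and $H^{\frac32+\delta}$ (with weight $\tfrac12$) to obtain $\Vert v^{(k)}\Vert_{H^{1+\frac\delta2}}^4\lesssim\Vert v^{(k)}\Vert_{H^{\frac12}}^2\Vert v^{(k)}\Vert_{H^{\frac32+\delta}}^2$. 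Since $\phi_k\ne0$ forces $\Vert v^{(k)}(t)\Vert_{H^{\frac12}}\le2\bar\epsilon$ by~\eqref{EQ34}, this yields $I_1\lesssim\bar\epsilon^2\,\EE\bigl[\int_0^T\Vert v^{(k)}\Vert_{H^{\frac32+\delta}}^2\,dt\bigr]$. For $I_3$, the Lipschitz assumption~\eqref{EQ04} with $\alpha=\delta$, together with $0\le(\psi_k\phi_k)^4\zeta_{k-1}^2\le1$, gives $I_3\lesssim\epsilon_\sigma^2\,\EE\bigl[\int_0^T\Vert v^{(k)}\Vert_{H^{\frac12+\delta}}^2\,dt\bigr]\lesssim\epsilon_\sigma^2\,\EE\bigl[\int_0^T\Vert v^{(k)}\Vert_{H^{\frac32+\delta}}^2\,dt\bigr]$, the last step by the Poincar\'e inequality. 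Both contributions are absorbed into the left-hand side once $\bar\epsilon$ and $\epsilon_\sigma$ are taken small, with the required smallness independent of $k$ and $T$.

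The estimate of $I_2$ is the heart of the matter, and I expect it to be the main obstacle. Here one again invokes~\eqref{EQ23}, now with $v=v^{(k)}$ and $w=u^{(k-1)}$, and then interpolates so that all the surplus ($\delta$) derivatives, together with the dissipation derivatives, are carried by $v^{(k)}$: interpolating $\Vert v^{(k)}\Vert_{H^{1+\frac\delta2}}$ between $H^{\frac12}$ and $H^{\frac32+\delta}$ and $\Vert u^{(k-1)}\Vert_{H^{1+\frac\delta2}}$ between $H^{\frac12}$ and $H^{\frac32}$ gives $\Vert v^{(k)}\otimes u^{(k-1)}\Vert_{H^{\frac12+\delta}}^2\lesssim\Vert v^{(k)}\Vert_{H^{\frac12}}\Vert v^{(k)}\Vert_{H^{\frac32+\delta}}\Vert u^{(k-1)}\Vert_{H^{\frac12}}^{1-\delta}\Vert u^{(k-1)}\Vert_{H^{\frac32}}^{1+\delta}$. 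One then uses $\Vert v^{(k)}(t)\Vert_{H^{\frac12}}\lesssim\bar\epsilon$ on $\supp\phi_k$ and $\Vert u^{(k-1)}(t)\Vert_{H^{\frac12}}\le\epsilon$ from~\eqref{EQ86}, followed by a Cauchy--Schwarz/Young splitting in time of the remaining factors. The care needed is that the leftover power $\Vert u^{(k-1)}\Vert_{H^{\frac32}}^{1+\delta}$ is only controlled through $\int_0^\infty\Vert u^{(k-1)}\Vert_{H^{\frac32}}^2\le\epsilon^2$ and not pointwise in $t$; one compensates by combining this with the interpolation $\Vert u^{(k-1)}\Vert_{H^{\frac32}}\lesssim\Vert u^{(k-1)}\Vert_{H^{\frac12}}^{\frac\delta{1+\delta}}\Vert u^{(k-1)}\Vert_{H^{\frac32+\delta}}^{\frac1{1+\delta}}$ and the pathwise a priori bounds the cutoffs enforce, namely $\int_0^t\Vert v^{(k)}\Vert_{H^{\frac32+\delta}}^2\le4M_k^2$ on $\supp\psi_k$ and, via $\zeta_{k-1}=\prod_{i<k}\psi_i$ and Minkowski's inequality, $\Vert u^{(k-1)}(t)\Vert_{H^{\frac12+\delta}}+\bigl(\int_0^t\Vert u^{(k-1)}\Vert_{H^{\frac32+\delta}}^2\bigr)^{\frac12}\lesssim\sum_{i<k}M_i$ on $\supp\zeta_{k-1}$. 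The delicate point is precisely that~\eqref{EQ86} furnishes only \emph{critical} control of $u^{(k-1)}$ (the $H^{\frac12}$ bound plus the integrated $H^{\frac32}$ bound), so the subcritical product estimate has to be balanced so that the smallness closing $I_2$ is drawn from $\bar\epsilon$ and $\epsilon$; the target is a bound for $I_2$ absorbable into $\EE\bigl[\sup_{[0,T]}\Vert v^{(k)}\Vert_{H^{\frac12+\delta}}^2+\int_0^T\Vert v^{(k)}\Vert_{H^{\frac32+\delta}}^2\,dt\bigr]$ together with the contributions of $I_1$ and $I_3$.

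Finally, collecting the estimates for $I_1$, $I_2$, and $I_3$ and choosing $\epsilon$, $\bar\epsilon$, and $\epsilon_\sigma$ small enough that the absorbable contributions can be moved to the left-hand side (with the required smallness depending only on absolute constants, hence uniform in $k$ and $T$), we obtain~\eqref{EQ94} with $\int_0^T$ in place of $\int_0^\infty$ and a constant $C$ independent of $k$ and $T$. Letting $T\to\infty$ and invoking the monotone convergence theorem completes the proof.
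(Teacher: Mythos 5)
Your overall route is the same as the paper's: invoke Lemma~\ref{L01} with $\alpha=\delta$ for~\eqref{EQ85}, split the forcing into the self-interaction $I_1$, the cross terms $I_2$, and the noise term $I_3$ as in~\eqref{EQ95}, absorb $I_1$ using~\eqref{EQ25} together with the $H^{\frac12}$-smallness enforced by $\phi_k$, and absorb $I_3$ using~\eqref{EQ04} with $\alpha=\delta$ and the Poincar\'e inequality. Those two estimates match the paper's verbatim.

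The gap is in $I_2$, which you correctly single out as the crux but do not close. Your asymmetric interpolation
$\Vert v^{(k)}\otimes u^{(k-1)}\Vert_{H^{\frac12+\delta}}^2\lesssim \Vert v^{(k)}\Vert_{H^{\frac12}}\Vert v^{(k)}\Vert_{H^{\frac32+\delta}}\Vert u^{(k-1)}\Vert_{H^{\frac12}}^{1-\delta}\Vert u^{(k-1)}\Vert_{H^{\frac32}}^{1+\delta}$
leaves only a \emph{single} power of $\Vert v^{(k)}\Vert_{H^{\frac32+\delta}}$. After the compensations you describe (trading $\Vert u^{(k-1)}\Vert_{H^{\frac32}}^{1+\delta}\lesssim\epsilon^{\delta}\Vert u^{(k-1)}\Vert_{H^{\frac32+\delta}}$ and applying Cauchy--Schwarz in time), the $u^{(k-1)}$ factor is controlled only through $\bigl(\int_0^t\Vert u^{(k-1)}\Vert_{H^{\frac32+\delta}}^2\,ds\bigr)^{1/2}\lesssim\sum_{i<k}M_i$ on the support of $\zeta_{k-1}$, so the final Young step produces a remainder of order $\bar\epsilon\,\epsilon\,\bigl(\sum_{i<k}M_i\bigr)^2$. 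That remainder is neither absorbable into the left-hand side nor dominated by $C\,\EE\bigl[\Vert v^{(k)}_0\Vert_{H^{\frac12+\delta}}^2\bigr]$ with $C$ independent of $k$, so~\eqref{EQ94} does not follow from your splitting; ending with a statement of the ``target'' does not repair this. The structural requirement you are missing is that every term produced by the splitting of $I_2$ must carry a \emph{full square} of a $v^{(k)}$ norm at one of the two admissible levels, $\sup_t\Vert v^{(k)}\Vert_{H^{\frac12+\delta}}^2$ or $\int\Vert v^{(k)}\Vert_{H^{\frac32+\delta}}^2\,dt$, multiplied by a small quantity built from $u^{(k-1)}$ alone. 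This is exactly how the paper's~\eqref{EQ99} proceeds: starting from~\eqref{EQ23} it arranges the exponents so that the cross term splits as $\Vert v^{(k)}\Vert_{H^{\frac12+\delta}}^{2}\Vert u^{(k-1)}\Vert_{H^{\frac32+\delta}}^{2}+\Vert v^{(k)}\Vert_{H^{\frac32+\delta}}^{2}\Vert u^{(k-1)}\Vert_{H^{\frac12+\delta}}^{2}$, the first term being bounded by $\EE\bigl[\sup_t\Vert v^{(k)}\Vert_{H^{\frac12+\delta}}^2\bigr]$ times the time integral of the $u^{(k-1)}$ factor and the second by the dissipation times $\sup_t\Vert u^{(k-1)}\Vert_{H^{\frac12+\delta}}^2$, with the factor $\epsilon^2$ drawn entirely from $u^{(k-1)}$ and no $M_k$-sized constants surviving. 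You should redo the interpolation/Young step so that the exponents on the $v^{(k)}$ factors come out equal to $2$.
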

\colb

\begin{proof}[proof of Lemma~\ref{LM04}]
Let $T>0$ be fixed. We apply Lemma~\ref{L01} to \eqref{EQ85} componentwise, obtaining
	 \begin{align}
		\begin{split}
			&\EE\biggl[
			\sup_{0\leq t\leq T}
			\Vert v^{(k)}(t,\cdot)\Vert_{H^{\frac{1}{2}+\delta}}^2
			- \Vert v_0^{(k)}\Vert_{H^{\frac{1}{2}+\delta}}^2
			+ \int_0^{T} \Vert v^{(k)}\Vert_{H^{\frac{3}{2}+\delta}}^2 \,dt
			\biggr]
			\\&\indeq
			\lec
			\EE\biggl[
			\int_0^{T} (\psi_k \phi_k)^2 \Vert v^{(k)}\otimes v^{(k)}\Vert_{H^{\frac12  +\delta}}^2
			\,dt\biggr]
			+\EE \biggl[\int_0^{T} (\psi_k \phi_k)^2\zeta_{k-1}\Vert v^{(k)}\otimes u^{(k-1)} \Vert_{H^{\frac12 + \delta}}^2
			\,dt\biggr]
			\\&\indeq\indeq
			+
			\EE \biggl[
			 \int_0^{T}\bigl\Vert (\psi_k \phi_k)^2 \zeta_{k-1}
			\bigl( 
			\sigma(t,v^{(k)}+u^{(k-1)}) - \sigma(t, u^{(k-1)})
			\bigr)
			                    \bigr\Vert_{\mathbb{H}^{\frac12 +\delta}}^2\,dt\biggr]
			= I_1 + I_2 + I_3
			.
		\end{split}
		\label{EQ95}
	\end{align}
First, the estimate~\eqref{EQ25} and the definition~\eqref{EQ34} yield
   \begin{align}
   	I_1
   	 \lec
   	   \bar{\epsilon}^2
   	   \EE \biggl[\int_0^{T} \Vert v^{(k)}\Vert_{H^{\frac{3}{2} + \delta}}^2\,dt\biggr]
   	   ,
	   \llabel{EQ96}
   \end{align}
while for $I_3$ we have
   \begin{align}
   	 I_3
   	  \lec 
   	   \epsilon_{\sigma}^2 \EE\biggl[
   	   \int_0^{T} \Vert v^{(k)}\Vert_{H^{\frac{1}{2}+\delta}}^2
   	   \,dt\biggr]
   	     \lec
   	     \epsilon_{\sigma}^2
   	    \EE\biggl[
   	    \int_0^{T} \Vert v^{(k)}\Vert_{H^{\frac{3}{2}+\delta}}^2
   	    \,dt\biggr]
   	    ,\llabel{EQ97}
   \end{align}
using~\eqref{EQ04}. Finally, recalling the Sobolev inequality~\eqref{EQ23} and the Sobolev embedding, we get
\begin{align}
	\begin{split}
	I_2
	 &\lec
	 \EE \biggl[\int_0^{T}  \phi_k \psi_k
	  \Vert v^{(k)}\Vert_{H^{\frac{1}{2}+\delta}}^{1+\delta}
	 \Vert v^{(k)}\Vert_{H^{\frac{3}{2}+\delta}}^{1-\delta}
	 \Vert u^{(k-1)}\Vert_{H^{\frac{1}{2}+\delta}}^{1+\delta}
	 \Vert u^{(k-1)}\Vert_{H^{\frac{3}{2}+\delta}}^{1-\delta}
	 \,dt\biggr]
	 \\ &\lec
	 \EE \biggl[\int_0^{T}  \phi_k \psi_k \bigl(
	 \Vert v^{(k)}\Vert_{H^{\frac{1}{2}+\delta}}^{2}
	 \Vert u^{(k-1)}\Vert_{H^{\frac{3}{2}+\delta}}^{2}
	 +
	 \Vert v^{(k)}\Vert_{H^{\frac{3}{2}+\delta}}^{2}
	 \Vert u^{(k-1)}\Vert_{H^{\frac{1}{2}+\delta}}^{2}
	 \bigr) \,dt\biggr]
	  \\ &\lec
	  \epsilon^2\left(
	   \EE \biggl[\sup_t \Vert v^{(k)}\Vert_{H^{\frac12 +\delta}}^2\biggr]
	   +\EE \biggl[\int_0^{T} \Vert v^{(k)}\Vert_{H^{\frac{3}{2} + \delta}}^2\,dt\biggr]
	   \right)
	   .
	  \end{split}\label{EQ99}
\end{align}  
	By collecting \eqref{EQ95}--\eqref{EQ99} and choosing
	$\epsilon$, $\bar{\epsilon}$, and $\epsilon_{\sigma}$
	sufficiently small, we achieve~\eqref{EQ94}.
\end{proof}

\begin{Remark}\label{R04}
We note that interpolating
	$\mathbb{H}^{\frac{1}{2}+\delta}$ between
	$\mathbb{H}^{\frac{1}{2}}$ and $\mathbb{H}^{\frac{1}{2}+2\delta}$
	allows us to obtain 
	\begin{align}
		I_3
		\le 
		C\epsilon_{\sigma} \EE\biggl[
		\int_0^{T} \phi_k\Vert v^{(k)}\Vert_{H^{\frac{1}{2}}}\Vert v^{(k)}\Vert_{H^{\frac{1}{2}+2\delta}}
		\,dt\biggr]
		\le
		C \epsilon_{\sigma}
		\EE\biggl[
		\int_0^{T} \Vert v^{(k)}\Vert_{H^{\frac{3}{2}+\delta}}^2
		\,dt\biggr],
		\llabel{EQ097}
	\end{align}
under the assumptions
   \begin{align}
   	\begin{split}
   			 \Vert\sigma(t, u_1)-\sigma(t, u_2)\Vert_{\mathbb{H}^{\frac12 }} 
   		&\le \epsilon_{\sigma}\Vert  u_1-u_2\Vert_{H^{\frac12}},\\
   	\Vert \sigma(t,u_1)-\sigma(t,u_2)\Vert_{\mathbb{H}^{\frac{1}{2}+2\delta}}
   	 &\le C \Vert u_1-u_2\Vert_{H^{\frac{1}{2}+2\delta}}
   	 .
   	\end{split}\label{EQ098}
   \end{align}
   Therefore, we may replace \eqref{EQ04} by \eqref{EQ098}
   which only requires smallness in~$H^\frac12$.
\end{Remark}

\begin{Remark}\label{R01}  
When we do not assume that $\epsilon_\sigma$ is small in Lemmas~\ref{L07} and \ref{LM04}, we obtain
   \begin{align}
\EE\biggl[\sup_{0\leq t\leq T}\Vert\vk(t,\cdot)\Vert_{H^{\frac12 +\alpha}}^2
+\int_0^{T} \Vert\vk(t,\cdot)\Vert_{H^{\frac{3}{2} +\alpha}}^2 \,dt\biggr]
\leq C
\EE\biggl[
\Vert\vk_{0}\Vert_{H^{\frac12 +\alpha}}^2
+
\int_0^{T} 
\Vert \vk\Vert_{H^{\frac12 +\alpha}}^2
\,dt
\biggr],\llabel{EQ101}
\end{align}
from which, by employing Gr\"onwall's lemma, we derive
   \begin{align}
\begin{split}
&\EE\biggl[\sup_{0\leq t\leq T}\Vert\vk(t,\cdot)\Vert_{H^{\frac12 +\alpha}}^2
+\int_0^{T}\Vert\vk(t,\cdot)\Vert_{H^{\frac{3}{2} +\alpha}}^2 \,dt\biggr]
\leq C_T
\EE\bigl[
\Vert\vk_{0}\Vert_{H^{\frac12 +\alpha}}^2
\bigr]
,
\end{split}
\llabel{EQ100}
\end{align}
for any fixed $T$ and $\alpha \in\{0, \delta\}$.
Following the steps in the above proof, we then obtain the assertion of Lemma~\ref{LM04} on $[0,T]$
without utilizing that $\epsilon_\sigma$ is small.
\end{Remark}  

Given the existence of $v^{(k)}$ for all times $t$, we can establish a pointwise control of the $H^\frac{1}{2}$-norm of~$v^{(k)}$, which ensures the assumption~\eqref{EQ86} in Lemma~\ref{L07}; see~\eqref{EQ31}.
 
\cole
\begin{Lemma}[A pointwise $H^\frac{1}{2}$ control]
\label{L09}
  Let $k\in\mathbb{N}_0$, and consider 
  a sequence of initial data $\{\vk_0\}_{k\in\NNp}$ 
  satisfying \eqref{EQ26}--\eqref{EQ29}. 
  Then, for the unique solution $v^{(k)}$ of the system \eqref{EQ85}, we have
 \begin{align}
   \sup_{\Omega \times [0,\infty)}
    \biggl(\Vert \vk(t)\Vert_{H^\frac{1}{2}} + \left(\int_0^t \Vert \vk(s)\Vert_{H^\frac{3}{2}}^2 \,ds\right)^\frac12\biggr)
   \leq
   \frac{\bar{\epsilon}}{2^{k-1}}
   ,\label{EQ102}
  \end{align}
where $\bar{\epsilon} \in (2\epsilon_0,1)$.
\end{Lemma}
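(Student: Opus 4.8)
The plan is to prove \eqref{EQ102} by induction on $k$, the key structural fact being that the cutoff $\phi_k$ in \eqref{EQ34} switches off \emph{both} the nonlinearity and the noise in \eqref{EQ85} exactly at the level $\bar\epsilon/2^{k-1}$, so that once $v^{(k)}$ reaches that level it obeys the purely dissipative heat equation. Throughout I abbreviate $N_k(t):=\Vert v^{(k)}(t)\Vert_{H^{1/2}}+\bigl(\int_0^t\Vert v^{(k)}(s)\Vert_{H^{3/2}}^2\,ds\bigr)^{1/2}$, so that \eqref{EQ102} is the assertion $N_k\le\bar\epsilon/2^{k-1}$ on $\Omega\times[0,\infty)$; note that $N_k(\cdot,\omega)$ is continuous for a.e.\ $\omega$ since $v^{(k)}\in L^2(\Omega;C([0,T],H^{1/2}))\cap L^2(\Omega;L^2([0,T],H^{3/2}))$ for every $T$.

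The first task is to extract from the inductive hypothesis the smallness of $u^{(k-1)}$ needed to feed Lemma~\ref{L07}. For $k=0$ this is trivial because $u^{(-1)}=0$, so \eqref{EQ86} holds with any (small) choice of $\epsilon>0$. For $k\ge1$, assume \eqref{EQ102} for $v^{(0)},\dots,v^{(k-1)}$; since $u^{(k-1)}=v^{(0)}+\cdots+v^{(k-1)}$ by \eqref{EQ31}, the triangle inequality in $H^{1/2}$ and in $L^2([0,t],H^{3/2})$ gives
\[
\sup_{\Omega\times[0,\infty)}\biggl(\Vert u^{(k-1)}(t)\Vert_{H^{1/2}}+\Bigl(\int_0^t\Vert u^{(k-1)}\Vert_{H^{3/2}}^2\,ds\Bigr)^{1/2}\biggr)\le\sum_{j=0}^{k-1}\frac{\bar\epsilon}{2^{j-1}}<4\bar\epsilon,
\]
so \eqref{EQ86} holds with $\epsilon=4\bar\epsilon$, which is admissible once $\bar\epsilon$ is small. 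Hence Lemma~\ref{L07} applies to $v^{(k)}$ and yields the $k$-independent energy bound \eqref{EQ87}; in particular $\int_0^\infty\Vert v^{(k)}\Vert_{H^{3/2}}^2<\infty$ and $\sup_t\Vert v^{(k)}(t)\Vert_{H^{1/2}}^2$ is $\Omega$-integrable.

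It remains to upgrade this $L^2(\Omega)$ information to the pathwise bound \eqref{EQ102}. Since $N_k(0)=\Vert v^{(k)}_0\Vert_{H^{1/2}}<\bar\epsilon/2^{k-1}$ by \eqref{EQ27}--\eqref{EQ28} and $\bar\epsilon>2\epsilon_0$, it suffices to show that if $N_k$ ever reaches $\bar\epsilon/2^{k-1}$ then it never exceeds $\bar\epsilon/2^{k-1}$; let $\tau_k$ be the first such time. On $[0,\tau_k]$ I would apply It\^o's formula to $\Vert v^{(k)}\Vert_{H^{1/2}}^2$ in \eqref{EQ85}: by \eqref{EQ24}, the smallness of $\epsilon$ in \eqref{EQ86}, the smallness of $\epsilon_\sigma$ in \eqref{EQ04}, the Poincar\'e inequality, and $\supp\phi_k\subset\{N_k\le\bar\epsilon/2^{k-1}\}$, every term produced by the nonlinearity and the noise is bounded by $C(\bar\epsilon+\epsilon+\epsilon_\sigma)\Vert v^{(k)}\Vert_{H^{3/2}}^2$, so the It\^o drift of $\Vert v^{(k)}\Vert_{H^{1/2}}^2$ is $\le-(1-C(\bar\epsilon+\epsilon+\epsilon_\sigma))\Vert v^{(k)}\Vert_{H^{3/2}}^2\le0$. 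The point of this is that the $H^{1/2}$-norm cannot by itself consume the budget $\bar\epsilon/2^{k-1}$ carried by $N_k$: at $\tau_k$ one should have $\Vert v^{(k)}(\tau_k)\Vert_{H^{1/2}}\le\bar\epsilon/2^k$, so at least half of the budget resides in the dissipation integral. Then, for $t>\tau_k$ and as long as $N_k(t)\ge\bar\epsilon/2^{k-1}$, the cutoff $\phi_k$ vanishes and $\partial_tv^{(k)}=\Delta v^{(k)}$; since $v^{(k)}$ is average-free, $\Vert v^{(k)}(t)\Vert_{H^{1/2}}$ is nonincreasing and $\int_{\tau_k}^t\Vert v^{(k)}\Vert_{H^{3/2}}^2\le\Vert v^{(k)}(\tau_k)\Vert_{H^{1/2}}^2-\Vert v^{(k)}(t)\Vert_{H^{1/2}}^2$. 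Writing $R=\bar\epsilon/2^{k-1}$, $a=\Vert v^{(k)}(\tau_k)\Vert_{H^{1/2}}\le R/2$ and $p=\Vert v^{(k)}(t)\Vert_{H^{1/2}}\in[0,a]$, one obtains $N_k(t)\le p+\sqrt{(R-a)^2+a^2-p^2}$, which, because $a\le R/2$, is maximized at $p=a$ with value $a+(R-a)=R$; hence $N_k(t)\le R$, which is \eqref{EQ102}.

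The step I expect to be the genuine obstacle is the pathwise control of $\Vert v^{(k)}\Vert_{H^{1/2}}$ on $[0,\tau_k]$, i.e.\ establishing $\Vert v^{(k)}(\tau_k)\Vert_{H^{1/2}}\le\bar\epsilon/2^k$: the non-positive It\^o drift only gives $\EE\Vert v^{(k)}(\tau_k)\Vert_{H^{1/2}}^2\le\EE\Vert v^{(k)}_0\Vert_{H^{1/2}}^2$, whereas \eqref{EQ102} needs the almost sure statement that the $H^{1/2}$-norm does not run up toward $\bar\epsilon/2^{k-1}$ before roughly a quarter of its squared budget has been absorbed by the dissipation integral — precisely the threshold past which the heat-flow computation above stops closing. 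Making this rigorous is where the smallness of $\bar\epsilon$, $\epsilon$, and $\epsilon_\sigma$ has to be used to dominate the stochastic-integral term, whose quadratic variation is confined by $\supp\phi_k$ and by \eqref{EQ04}, and where the particular way in which $N_k$ is split into its two pieces in the definition \eqref{EQ34} of the cutoff enters in an essential way.
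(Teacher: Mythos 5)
Your overall strategy --- the initial datum starts strictly below the threshold, and once $Q_{k,0}(t):=\Vert v^{(k)}(t)\Vert_{H^{1/2}}+\bigl(\int_0^t\Vert v^{(k)}\Vert_{H^{3/2}}^2\,ds\bigr)^{1/2}$ reaches the level $R:=\bar\epsilon/2^{k-1}$ at which $\phi_k$ vanishes, the system \eqref{EQ85} degenerates pathwise to the heat equation --- is exactly the paper's (the paper only cites the analogue of this mechanism from the $L^3$ setting). The obstacle you flag at the end is genuine, and you have located it precisely: because $Q_{k,0}$ contains a nondecreasing dissipation integral, it is \emph{not} monotone under the pure heat flow. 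Writing $a=\Vert v^{(k)}(\tau_k)\Vert_{H^{1/2}}$ and $b=\bigl(\int_0^{\tau_k}\Vert v^{(k)}\Vert_{H^{3/2}}^2\,ds\bigr)^{1/2}$ at the first crossing of $R$, your own computation shows the heat flow keeps $Q_{k,0}\le R$ only when $a\le b$, and there is no pathwise mechanism forcing $a\le R/2$ there: on the transition region $Q_{k,0}\in(\bar\epsilon/2^{k},R)$ the noise is still active, and a stochastic integral with bounded integrand is not bounded uniformly in $\omega$, so the $H^{1/2}$-part can be driven close to $R$ before the dissipation integral has accumulated. Your drift computation on $[0,\tau_k]$ and the appeal to Lemma~\ref{L07} only produce statements in expectation, which cannot yield the $\sup_{\Omega}$ bound; they are not needed for this lemma and do not close the gap.

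The repair is to run the same barrier argument on the sum of squares rather than on $Q_{k,0}$ itself. Set $\tilde Q_k^2(t)=\Vert v^{(k)}(t)\Vert_{H^{1/2}}^2+\int_0^t\Vert v^{(k)}\Vert_{H^{3/2}}^2\,ds$, so that $\tilde Q_k\le Q_{k,0}\le\sqrt2\,\tilde Q_k$. On any connected component $(t_1,t_2)$ of the open set $\{\tilde Q_k>R\}$ one has $Q_{k,0}\ge\tilde Q_k>R$, hence $\phi_k\equiv0$ and $v^{(k)}$ solves the heat equation there; since $(1+|n|^2)^{3/2}\le 2|n|^2(1+|n|^2)^{1/2}$ for $n\ne0$ and $v^{(k)}$ is average-free, the identity $\frac{d}{dt}\Vert v^{(k)}\Vert_{H^{1/2}}^2=-2\Vert\nabla\Lambda^{1/2}v^{(k)}\Vert_{L^2}^2$ gives $\frac{d}{dt}\tilde Q_k^2\le0$ on the component, contradicting $\tilde Q_k>R=\tilde Q_k(t_1)$. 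Hence $\tilde Q_k\le R$ almost surely for all $t$, and therefore $Q_{k,0}\le\sqrt2\,\bar\epsilon/2^{k-1}$. This is weaker than \eqref{EQ102} by the factor $\sqrt2$, but the loss is immaterial in every subsequent use of the lemma (the verification of \eqref{EQ86} by summing over $k$, and the estimates in Section~\ref{sec.thm}, only require a bound of the form $C\bar\epsilon/2^{k}$); if one insists on the constant in \eqref{EQ102} exactly as stated, the cleanest fix is to define the cutoffs \eqref{EQ33}--\eqref{EQ34} in terms of the squared quantities, which is what the monotonicity argument actually sees.
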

\colb

\begin{proof}[proof of Lemma~\ref{L09}]
The argument for this lemma follows from the proof of~\cite[Lemma 3.5]{KX2}
upon replacing $\Vert \vk(t)\Vert_{L^{3}}$ with $Q_{k,0}(t)$, where
\begin{align}
	{Q}_{k,\alpha}(t)
	=\Vert \vk(t)\Vert_{H^{\frac{1}{2}+\alpha}} + \left(\int_0^t \Vert \vk(s)\Vert_{H^{\frac{3}{2}+\alpha}}^2 \,ds\right)^\frac12
	\comma \alpha=0, \delta.\llabel{EQ104}
\end{align}
The argument relies on the fact that the energy of $\vk$ decays once the energy exceeds the $\bar{\epsilon}/2^{k-1}$-threshold imposed by the cutoff functions; see~\eqref{EQ34}.
\end{proof}

Denote by $\tau_k$ and $\rho_k$ the stopping times when $Q_{k,0}(t)$
first reaches $\fractext{\bar{\epsilon}}{2^{k}}$ and $Q_{k,\delta}(t)$
reaches $M_k$, respectively. Then define
  \begin{align}
   \tau^{k}
   = \tau_{1} \wedge \rho_{1} \wedge \cdots \wedge \tau_{k} \wedge \rho_{k}
    \comma k\in\mathbb{N}_0
   \label{EQ105}
  \end{align}
and 
  \begin{align}
   \tau=\inf_k \tau^k=\lim_k \tau^k
   .
   \label{EQ106}
  \end{align}
Consequently, 
  \begin{align}
  \sup_{t\in [0,\tau)}Q_{k,0}(t)
  \leq
  \frac{\bar{\epsilon}}{2^{k}}
  \comma k\in\NNz
  \llabel{EQ107}
  \end{align}
and 
  \begin{align}
  \sup_{t\in [0,\tau)}Q_{k,\delta}(t)
  \leq
  M_k
  \comma k\in\mathbb{N}_0
  .
  \llabel{EQ108}
  \end{align}
Now, we show that $\tau$ is finite with only a small probability.

\cole
\begin{Lemma}
\label{L10}
Let $\bar \epsilon$ and $\epsilon_{\sigma}$ be sufficiently small, and let $\{\vk_0\}_{k\in\NNp}$ be a sequence of initial data for~\eqref{EQ85} satisfying the assertions of Lemma~\ref{L05}. 
For every $p_0\in(0,1)$, we have
\[
\PP(\tau<\infty)\leq p_0,
\]
provided
$\epsilon_0$ is sufficiently small relative to $\bar{\epsilon}$ and $M_k$ is sufficiently large relative to $\MM_k$; see~\eqref{EQ11}, \eqref{EQ30}, \eqref{EQ33}, and~\eqref{EQ34} for their definitions.
\end{Lemma}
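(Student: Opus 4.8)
The plan is to show that the stopping time $\tau = \lim_k \tau^k$ is finite only on an event of small probability, by estimating $\PP(\tau^k < \infty)$ for each $k$ and summing. Since $\tau^k = \tau_1 \wedge \rho_1 \wedge \cdots \wedge \tau_k \wedge \rho_k$, a union bound gives $\PP(\tau < \infty) \le \sum_{k\ge 1}\bigl(\PP(\tau_k < \infty) + \PP(\rho_k < \infty)\bigr)$, so it suffices to make each of these two families of probabilities summable to something $\le p_0$. Recall that $\tau_k$ is the first time $Q_{k,0}(t)$ reaches $\bar\epsilon/2^k$ and $\rho_k$ is the first time $Q_{k,\delta}(t)$ reaches $M_k$; on the event $\{\tau_k < \infty\}$ we have $\sup_t Q_{k,0}(t) \ge \bar\epsilon/2^k$, and likewise $\sup_t Q_{k,\delta}(t) \ge M_k$ on $\{\rho_k < \infty\}$. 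Before reaching these thresholds, the cutoffs $\psi_k$, $\phi_k$, and $\zeta_{k-1}$ are identically~$1$, so $v^{(k)}$ solves the genuine (uncut) system on $[0,\tau^{k-1})$, and the hypothesis~\eqref{EQ86} with $\epsilon = \bar\epsilon/2^{k-1}$ holds for $u^{(k-1)} = v^{(0)} + \cdots + v^{(k-1)}$ by Lemma~\ref{L09} applied inductively (the sum of the geometric tail is $\le C\bar\epsilon$, which is small).

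The first key step is to apply the global energy controls, Lemmas~\ref{L07} and~\ref{LM04}, which are available precisely because the induction hypothesis~\eqref{EQ86} is in force up to the stopping time. Lemma~\ref{L07} gives
\[
\EE\Bigl[\sup_{t} \Vert v^{(k)}(t)\Vert_{H^{1/2}}^2 + \int_0^\infty \Vert v^{(k)}(t)\Vert_{H^{3/2}}^2\,dt\Bigr] \le C\,\EE\bigl[\Vert v_0^{(k)}\Vert_{H^{1/2}}^2\bigr] \le C\,\frac{\epsilon_0^2}{16^k},
\]
using the decomposition bound~\eqref{EQ28}, and hence $\EE[\,\sup_t Q_{k,0}(t)^2\,] \le C\epsilon_0^2/16^k$. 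By Chebyshev's inequality,
\[
\PP(\tau_k < \infty) \le \PP\Bigl(\sup_t Q_{k,0}(t) \ge \tfrac{\bar\epsilon}{2^k}\Bigr) \le \frac{C\epsilon_0^2/16^k}{\bar\epsilon^2/4^k} = \frac{C\epsilon_0^2}{\bar\epsilon^2\, 4^k}.
\]
Summing over $k\ge 1$ gives $\sum_k \PP(\tau_k < \infty) \le C\epsilon_0^2/\bar\epsilon^2$, which is $\le p_0/2$ once $\epsilon_0$ is chosen small enough relative to $\bar\epsilon$. For the $\rho_k$ part, Lemma~\ref{LM04} gives $\EE[\,\sup_t Q_{k,\delta}(t)^2\,] \le C\,\EE[\Vert v_0^{(k)}\Vert_{H^{1/2+\delta}}^2] \le C\MM_k^2$ by~\eqref{EQ30}, so $\PP(\rho_k < \infty) \le C\MM_k^2/M_k^2$; choosing $M_k$ large enough relative to $\MM_k$ — say $M_k \ge 2^{k+1}\sqrt{C/p_0}\,\MM_k$ — makes $\sum_k \PP(\rho_k < \infty) \le p_0/2$.

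Combining the two bounds yields $\PP(\tau < \infty) \le \sum_k \PP(\tau_k < \infty) + \sum_k \PP(\rho_k < \infty) \le p_0$, as claimed. The main obstacle — and the point requiring the most care — is the \emph{inductive structure}: the energy estimates of Lemmas~\ref{L07} and~\ref{LM04} for $v^{(k)}$ presuppose that $u^{(k-1)}$ already satisfies the pointwise smallness~\eqref{EQ86}, which in turn relies on Lemma~\ref{L09} and on the energy bounds already being known for $v^{(0)},\ldots,v^{(k-1)}$. One must therefore run the argument as a single induction on $k$: assuming~\eqref{EQ86} holds for $u^{(k-1)}$ on $[0,\tau^{k-1})$ (equivalently, on $[0,\tau)$ since $\tau \le \tau^{k-1}$), conclude the energy bound for $v^{(k)}$, then invoke Lemma~\ref{L09} to upgrade it to the pointwise control $\sup_{[0,\tau)} Q_{k,0} \le \bar\epsilon/2^{k-1}$ needed for the next step, while simultaneously recording the probability estimates above. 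A secondary point is checking that the geometric sum $\sum_{j\le k-1} \bar\epsilon/2^{j-1} \le 4\bar\epsilon$ stays within the smallness regime demanded by Lemmas~\ref{L07} and~\ref{LM04}, which is guaranteed by taking $\bar\epsilon$ (hence $\epsilon_0 < \bar\epsilon/2$) sufficiently small at the outset.
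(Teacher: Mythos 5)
Your proposal is correct and follows essentially the same route as the paper: apply the global energy controls of Lemmas~\ref{L07} and~\ref{LM04} together with the decomposition bounds \eqref{EQ28} and \eqref{EQ30}, use Markov's inequality to bound $\PP(\tau_k<\infty)$ and $\PP(\rho_k<\infty)$ by summable geometric quantities, and conclude by a union bound over~$k$. Your explicit discussion of the inductive use of Lemma~\ref{L09} to secure hypothesis~\eqref{EQ86} is a welcome clarification of a point the paper addresses only implicitly, but it does not change the argument.
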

\colb

\begin{proof}[Proof of Lemma~\ref{L10}]
By Lemmas~\ref{L07} and~\ref{LM04},
\begin{align}
\begin{split}
\EE\biggl[\sup_{0\leq t< \infty}Q^2_{k,\alpha}(t)\biggr]
\leq 
C\EE\Bigl[\Vert \vk_0\Vert_{H^{\frac{1}{2}+\alpha}}^2\Bigr]
\comma \alpha=0,\delta,
\llabel{EQ109}
\end{split}
\end{align}
where $C>0$ does not depend on~$k$.
Utilizing Markov's inequality, we obtain
\begin{align}
  \begin{split}
  \PP\biggl(\sup_{0\leq t< \infty}Q_{k,0}(t)\geq \frac{\bar{\epsilon}}{2^{k}}\biggr)
  \leq 
  \frac{C\EE\Bigl[\Vert \vk_0\Vert_{H^\frac12}^2\Bigr]}{\bar{\epsilon}^2/2^{2k}}
  \leq 
  \frac{C\epsilon_0^2/4^{2k}}{\bar{\epsilon}^2/2^{2k}}
  \leq 
  \frac{C}{2^{2k}}
  \left(
   \frac{\epsilon_0}{\bar\epsilon}
  \right)^{2}
  \leq
  \frac{p_0}{2^{2k+2}}
  \comma 
  k\in\mathbb{N}_0
  \end{split}
   \label{EQ110}
  \end{align}
if $\epsilon_0$ is sufficiently small compared to~$\bar{\epsilon}$.
Moreover, we may choose $M_k$ sufficiently large with respect to
$\MM_k$ and write
\begin{align}
\begin{split}
\PP\biggl(\sup_{0\leq t< \infty}Q_{k,\delta}(t)\geq M_k\biggr)
\leq 
\frac{C\EE[\Vert \vk_0\Vert_{H^{\frac{1}{2}+\delta}}^2]}{M_k^2}
\leq 
\frac{C\MM_k^2}{M_k^2}
\leq 
\frac{p_0}{2^{2k+2}}
\comma 
k\in\mathbb{N}_0
.
\end{split}\label{EQ111}
\end{align}
Combining \eqref{EQ110} and \eqref{EQ111}, we arrive at
\begin{align}
\mathbb{P}(\tau_k<\infty)
\leq
\frac{p_0}{2^{2k+2}}
 \andand
\mathbb{P}(\rho_k<\infty)
\leq
\frac{p_0}{2^{2k+2}}
\comma k\in \mathbb{N}_0.
   \llabel{EQ112}
\end{align} 
Therefore, we obtain
  \begin{align}
  \begin{split}
   \PP(\tau^k<\infty)
   &\leq
   \sum_{j=0}^{k}
   \mathbb{P}
   \Bigl(
   \sup_{0\leq t< \infty}Q_{k,\delta}(t)\geq M_j
   \Bigr)
   +
   \sum_{j=0}^{k}
   \mathbb{P}
   \Bigl(
   \sup_{0\leq t< \infty}Q_{k,0}(t)\geq \frac{\bar{\epsilon}}{2^{j}}
   \Bigr)
   \leq p_0
    \comma k\in\mathbb{N}_0
   ,
  \end{split}
   \llabel{EQ113}
  \end{align}
  for the stopping times $\tau^{k}$ in~\eqref{EQ105}, 
concluding the proof. 
\end{proof}

\begin{Remark}\label{R02}
Corresponding to Remark~\ref{R01}, when we relax the smallness assumption on $\epsilon_\sigma$ in~\eqref{EQ04}, 
we instead obtain $\PP(\tau<T)\lec_T p_0$ in Lemma~\ref{L10} since $C$ depends on~$T$. 
\end{Remark}

Next, we establish that $\tau$ is positive $\mathbb{P}$-almost surely, without requiring smallness of~$\epsilon_\sigma$. 

\cole
\begin{Lemma}
\label{L08}
Let $k\in\mathbb{N}_0$, and
assume that $\vk_0$ is an initial datum for~\eqref{EQ85} satisfying the assertions of Lemma~\ref{L05}.
With $\tau$ as defined in \eqref{EQ106}, we have
  \begin{align}
   \PP(\tau>0)
   = 1
   ,
   \label{EQ114}
  \end{align}
provided that $0<\bar\epsilon\ll 1$, $\epsilon_0$ is sufficiently small relative to $\bar{\epsilon}$, and $M_k$ is sufficiently large relative to $\MM_k$; see their definitions in~\eqref{EQ11}, \eqref{EQ30}, \eqref{EQ33}, and~\eqref{EQ34}.
\end{Lemma}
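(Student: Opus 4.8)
The plan is to combine the $\PP$-a.s.\ continuity of the sample paths $t\mapsto Q_{k,\alpha}(t)$ with a Borel--Cantelli argument fueled by the geometric decay of the data pieces from Lemma~\ref{L05}. The point to keep in mind throughout is that every finite minimum $\tau^j$ is easily seen to be a.s.\ positive, whereas $\tau=\inf_j\tau^j$ is a \emph{decreasing} limit of the $\tau^j$; thus the positivity of $\tau$ is not formal and must be extracted from quantitative bounds.

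First I would record that each $\tau^j$ is positive almost surely. By Lemma~\ref{LM02}, $\vk\in L^2(\Omega;C([0,T],H^{\frac12+\delta}))\cap L^2(\Omega;L^2([0,T],H^{\frac32+\delta}))$, so for $\alpha\in\{0,\delta\}$ the map $t\mapsto\|\vk(t)\|_{H^{\frac12+\alpha}}$ is $\PP$-a.s.\ continuous on $[0,T]$ and $t\mapsto\big(\int_0^t\|\vk(s)\|_{H^{\frac32+\alpha}}^2\,ds\big)^{1/2}$ is $\PP$-a.s.\ continuous and vanishes at $t=0$; hence $Q_{k,\alpha}$ is $\PP$-a.s.\ continuous on $[0,T]$ with $Q_{k,0}(0)=\|\vk_0\|_{H^{\frac12}}$ and $Q_{k,\delta}(0)=\|\vk_0\|_{H^{\frac12+\delta}}$. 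By \eqref{EQ27}--\eqref{EQ28}, $Q_{k,0}(0)\le2\epsilon_0$ for $k=0$ and $Q_{k,0}(0)\le\epsilon_0/4^{k}$ for $k\ge1$, so $Q_{k,0}(0)<\bar\epsilon/2^{k}$ for every $k$ as soon as $\epsilon_0<\bar\epsilon/2$; similarly $Q_{k,\delta}(0)\le\MM_k<M_k$ by \eqref{EQ30} once $M_k$ is taken strictly larger than $\MM_k$. Since a continuous path starting strictly below a level hits that level only after a positive time, $\tau_k>0$ and $\rho_k>0$ $\PP$-a.s.\ for every $k$, and, $\tau^j$ being the minimum of finitely many of these, $\PP(\tau^j>0)=1$ for each $j\in\NNz$; in particular the event $A:=\bigcap_j\{\tau^j>0\}$ has full probability.

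Next I would derive summable tail bounds at a fixed horizon. Take $T=1$: by Remark~\ref{R01}, the energy bounds of Lemmas~\ref{L07} and~\ref{LM04} hold on $[0,1]$ \emph{without} assuming $\epsilon_\sigma$ small, i.e.\ $\EE\big[\sup_{0\le t\le1}Q_{k,\alpha}^2(t)\big]\le C\,\EE\big[\|\vk_0\|_{H^{\frac12+\alpha}}^2\big]$ with $C$ independent of $k$. Markov's inequality then gives, exactly as in \eqref{EQ110}--\eqref{EQ111},
\[
\PP(\tau_k<1)\le\frac{C\,\EE\big[\|\vk_0\|_{H^{\frac12}}^2\big]}{\bar\epsilon^2/4^{k}}\le\frac{C\epsilon_0^2}{\bar\epsilon^2}\cdot\frac1{4^{k}}
\andand
\PP(\rho_k<1)\le\frac{C\,\EE\big[\|\vk_0\|_{H^{\frac12+\delta}}^2\big]}{M_k^2}\le\frac{C\MM_k^2}{M_k^2}.
\]
Choosing $M_k$ large relative to $\MM_k$, say $M_k\ge2^k\MM_k$, both right-hand sides are summable in $k$, so $\sum_k\big(\PP(\tau_k<1)+\PP(\rho_k<1)\big)<\infty$. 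By the Borel--Cantelli lemma, $\PP$-a.s.\ there is a finite random index $K$ with $\tau_k\ge1$ and $\rho_k\ge1$ for all $k>K$; using $\tau^k=\tau^{k-1}\wedge\tau_k\wedge\rho_k$ and induction, $\tau^k\ge\tau^{K}\wedge1$ for all $k\ge K$, hence $\tau=\lim_k\tau^k\ge\tau^{K}\wedge1$, which is $>0$ on $A$. This proves \eqref{EQ114}. (When $\epsilon_\sigma$ is assumed small, one may instead use the bounds $\PP(\tau_k<\infty),\,\PP(\rho_k<\infty)\le p_0/2^{2k+2}$ from the proof of Lemma~\ref{L10}; Remark~\ref{R01} is what lets us drop that hypothesis.)

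The main obstacle, and the only nontrivial point, is precisely the passage from ``every $\tau^j>0$'' to ``$\tau>0$'': because the $\tau^j$ decrease, one has to turn the a.s.\ positivity of the first-hitting times into a \emph{summable} decay of $\PP(\tau_k<1)$ and $\PP(\rho_k<1)$ — this is where the geometric smallness $\|\vk_0\|_{H^{\frac12}}\le\epsilon_0/4^{k}$ and the freedom to enlarge $M_k$ relative to $\MM_k$ are used — and then invoke Borel--Cantelli to render the sequence $\{\tau^k\}$ constant beyond a finite (random) index. The remaining ingredients are only sample-path continuity and Markov's inequality.
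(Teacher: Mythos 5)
Your argument is correct, but it reaches \eqref{EQ114} by a genuinely different route than the paper. The paper never invokes path continuity or Borel--Cantelli: instead it redoes the energy estimate of Lemma~\ref{L01} on a short window $[0,t_0]$, observing that after absorbing the convective terms only the noise contribution survives on the right-hand side and that, thanks to the cutoffs, this contribution is $O(t_0)$ pathwise; combining this with the gap between $C\Vert v^{(k)}_0\Vert_{H^{1/2+\alpha}}^2$ and the thresholds $(\bar\epsilon/2^{k})^2$, $M_k^2$, Markov's inequality yields $\PP(\sup_{[0,t_0]}Q_{k,\alpha}\ \text{exceeds its threshold})\lesssim t_0/2^{2k}$, hence $\PP(\tau^{k}<t_0)\lesssim t_0$ uniformly in $k$, and letting $t_0\to0$ gives $\PP(\tau=0)=0$. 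You instead fix the horizon $T=1$, reuse the already-established energy bounds of Lemmas~\ref{L07} and~\ref{LM04} (via Remark~\ref{R01} to avoid smallness of $\epsilon_\sigma$), extract summable-in-$k$ tails from the geometric decay of $\Vert v^{(k)}_0\Vert_{H^{1/2}}$ and the choice $M_k\gtrsim 2^{k}\MM_k$, and then combine Borel--Cantelli with the a.s.\ positivity of each first-hitting time, the latter coming from the continuity of $t\mapsto Q_{k,\alpha}(t)$ and the strict inequalities $Q_{k,0}(0)<\bar\epsilon/2^{k}$, $Q_{k,\delta}(0)<M_k$. Your version is softer but arguably more economical, since it recycles estimates already proved and avoids a second pass through the It\^o computation; the paper's version is more quantitative, producing the rate $\PP(\tau<t_0)\lesssim t_0$ rather than mere positivity. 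Both arguments rest on the same structural inputs (the gap between data and thresholds, and summability in $k$). One small point you should make explicit: before invoking Lemmas~\ref{L07} and~\ref{LM04} (or Remark~\ref{R01}) you must verify the hypothesis \eqref{EQ86} for $u^{(k-1)}$, which follows from \eqref{EQ31} and Lemma~\ref{L09} by summing the bounds $\bar\epsilon/2^{j-1}$ over $j\le k-1$; the paper records this at the outset of its proof.
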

\colb

\begin{proof}[Proof of Lemma~\ref{L08}]
Note that the smallness assumption \eqref{EQ86} holds as a consequence of~\eqref{EQ31} and Lemma~\ref{L09}.
Hence, we proceed as in the proofs of Lemma~\ref{L07} and Lemma~\ref{LM04}, applying Lemma~\ref{L01} to the system~\eqref{EQ85} componentwise on $[0, t_0]$ to obtain
	\begin{align}
	\EE\biggl[
	\sup_{0\leq t\leq t_0}
	Q^2_{k,\alpha}(t)-C\Vert\vk_{0}\Vert_{H^{\frac12 +\alpha}}^2
	 \biggr]
	\leq C
	\epsilon_{\sigma}^2
	\EE\biggl[
	\int_0^{t_0}\psi_k^2 \phi_k^2 
	\Vert \vk\Vert_{H^{\frac12 + \alpha}}^2\,dt
	\biggr]
	\comma \alpha=0,\delta,
	\llabel{EQ115}
	\end{align}
	for some constant $C\geq1$ that is independent of both~$k$ and~$t_0$. We may assume $0<\epsilon_0\ll \bar\epsilon$ so that $\Vert v_{0}^{(k)}\Vert_{\LLLit}\leq \bar{\epsilon}/C4^{k+1}$. From this, Markov's inequality implies
	\begin{align}
	\begin{split}
	&
	\PP\biggl(
	\sup_{t\in [0, t_0]} Q_{k,0}(t)
	\geq  
	\frac{\overline\epsilon}{2^{k}}
	\biggr)
	\leq 
	 \PP\biggl(
	\sup_{t\in [0, t_0]} Q^2_{k,0}(t)-C\Vert\vk_0 \Vert_{H^\frac12}^2
	\geq \left(
	\frac{\overline\epsilon}{ 2^{k}}
	\right)^2
	-
	\left(
	\frac{\overline\epsilon}{4^{k+1}}
	\right)^2
	\biggr)
	\\&\indeq
	\lec 
	\left(
	\frac{\overline\epsilon}{2^{k}}
	\right)^{-2}
	\epsilon_{\sigma}^2
	\EE\biggl[
	\int_0^{t_0}\psi_k^2 \phi_k^2 
	\Vert \vk\Vert_{H^{\frac12 }}^2\,dt
	\biggr]
	\lec
	t_0 \left(
	\frac{\overline\epsilon}{2^{k}}
	\right)^{-2}
	\left(
	\frac{\overline\epsilon}{4^{k+1}}
	\right)^{2}
	\lec \frac{t_0}{2^{2k}}
	.
	\end{split}
	\label{EQ116}
	\end{align}
	Similarly, we may assume $ \MM_k\ll M_k$ so that
	\begin{align}
	\begin{split}
	&\PP\biggl(
	\sup_{t\in [0, \delta]} Q^2_{k,\delta}(t)
	\geq  M_{k}^2
	\biggr)
	\leq 
		\PP\biggl(
	\sup_{t\in [0, t_0]} Q^2_{k,\delta}(t)-C\Vert\vk_0 \Vert_{H^\frac12+\delta}^2
	\geq 
	\frac{M_k^2}{ 2}
	\biggr)
		\\&\indeq
	\lec 
	M_k^{-2}
	\epsilon_{\sigma}^2
	\EE\biggl[
	\int_0^{t_0}\psi_k^2 \phi_k^2 
	\Vert \vk\Vert_{H^{\frac12 +\delta}}^2\,dt
	\biggr]
	\lec
	t_0 
	\left(
	\frac{\MM_k}{M_k}
	\right)^{2}
	\lec
	\frac{ t_0}{2^{2k}}
	.
	\end{split}
	\label{EQ117}
	\end{align}
Therefore, for all $k\in\mathbb{N}_0$,
	\begin{align}
	\begin{split}
	\PP(\tau^k<t_0)
	&\leq
	\sum_{j=0}^{k}
	\mathbb{P}
	\Bigl(
	\sup_{0\leq t< t_0}Q_{j,\delta}(t)\geq M_j
	\Bigr)
	+
	\sum_{j=0}^{k}
	\mathbb{P}
	\Bigl(
	\sup_{0\leq t< t_0}Q_{j,0}\geq \frac{\bar{\epsilon}}{2^{j}}
	\Bigr)
	\lec
	t_0
	,
	\end{split}
	\llabel{EQ118}
	\end{align}
which leads to~\eqref{EQ114} and concludes the proof. 
\end{proof}

\startnewsection{Conclusion of the proofs of Theorems~\ref{T01} and~\ref{T02}}{sec.thm}
Suppose that $u_0$ satisfies~\eqref{EQ11}. Then, Lemma~\ref{L05} provides a sequence of functions $\{v_{0}^{(j)}\}_{k\in\NNz}$ that are divergence-free, have zero spatial-average, and satisfy~\eqref{EQ26}--\eqref{EQ28}, with their partial sums converging to~$u_0$ in $H^{\frac12}$ almost surely. 

We have proven that, for every $k\in\NNz$, the stochastic Navier-Stokes-like system~\eqref{EQ85} has a solution $v^{(k)}$ in $L^2(\Omega; C([0,T], H^{\frac{1}{2}+\delta}))\cap L^2(\Omega; L^2([0,T], H^{\frac{3}{2}+\delta}))$ for all $T>0$. Moreover, $v^{(k)}$ solves~\eqref{EQ32} up to a common stopping time $\tau$; see~\eqref{EQ106} for the definition. By Lemma~\ref{L08}, $\tau$ is positive $\PP$-almost surely if $0<\epsilon_0\ll \bar\epsilon\ll 1$ and $0<\MM_k\ll M_k$; see~\eqref{EQ11}, \eqref{EQ30}, \eqref{EQ33}, and~\eqref{EQ34} for their definitions. Additionally, if $\epsilon_{\sigma}$ is sufficiently small (recall the assumption~\eqref{EQ04}), then $\PP(\tau<\infty)\leq p_0$ for any $p_0 \in (0,1)$, with the relative smallness of $\epsilon_0$ to $\bar\epsilon$ and the relative largeness of $M_k$ to $\MM_k$
depending on~$p_0$. Without assuming that $\epsilon_{\sigma}$ is small, we then have $\PP(\tau<T)\leq p_0$, where the relative smallness of $\epsilon_0$ to $\bar\epsilon$ and the relative largeness of $M_k$ to $\MM_k$ depend on $p_0$ and~$T$. 

Now, we refer to~\eqref{EQ31} for the definition of $\uk$ and note that
  \begin{align}
  \begin{split}
   &
   \sum_{j=0}^{k}
   \Bigl(
    (
    \vj\cdot \nabla)\vj
    + (\ujm \cdot\nabla) \vj
    + (\vj \cdot\nabla) \ujm
   \Bigr)
   =
   (\uk \cdot\nabla) \uk\comma k\in\mathbb{N}_0.
  \end{split}
   \label{EQ119}
  \end{align}
Then, $(\uk,\tau)$ is a solution to
  \begin{align} 
  \begin{split}
    &\partial_t \uk - \Delta \uk  + \mathcal{P}((\uk\cdot\nabla) \uk)
    = \sigma(t,\uk) \dot{W}(t),
    \\
    &\nabla\cdot \uk = 0,
    \\
    & \uk(0) = \uk_0 = v_{0}^{(0)} + \cdots + v_{0}^{(k)}
    \comma k\in\mathbb{N}_0,
  \end{split}
  \label{EQ120}
  \end{align}
that is, for any $T>0$, $\phi\in C^{\infty}(\TT^3)$, and $t\in [0, T]$, 
  \begin{align}
   \begin{split}
     (\uk_j( t \wedge \tau),\phi)
     &=
     (u_{0,j}^{(k)},\phi)
   + \int_{0}^{t\wedge\tau}
         (\uk_j (r), \Delta\phi )
     \,dr
   \\&\indeq
   - \int_{0}^{t\wedge\tau}
      \bigl(\bigl(\mathcal{P}  (\uk_m(r) \uk(r))\bigr)_j ,\partial_{m} \phi\bigr)
     \,dr
   \\&\indeq
     +\int_0^{t\wedge\tau} \bigl(\sigma_j(r, \uk(r)),\phi\bigr) \,dW(r)\quad
     \PP\mbox{-a.s.}
    \comma j=1,2,3
    .
   \end{split}
   \label{EQ121}
  \end{align}
Recalling~\eqref{EQ102}, we obtain that $\uk$ has a limit $u\in L^\infty(\Omega;L^\infty([0,\infty), H^\frac12) \cap L^2([0,\infty), H^{\frac32}))$. This convergence and \eqref{EQ121} lead to
  \begin{align}
  \begin{split}
  (\uu_j( t \wedge \tau),\phi)
  &=
  (\uu_{j}(0),\phi)
  + \int_{0}^{t\wedge\tau}
  (\uu_j (r), \Delta\phi )
  \,dr
  \\&\indeq
  - \int_{0}^{t\wedge\tau}
  \bigl(\bigl(\mathcal{P}  (\uu_m(r) \uu(r))\bigr)_j ,\partial_{m} \phi\bigr)
  \,dr
  +\int_0^{t\wedge\tau} \bigl(\sigma_j(r, \uu(r)),\phi\bigr)\,dW(r)\quad
  \PP\mbox{-a.s.}
  \end{split}
  \llabel{EQ122}
  \end{align} 
for $ j=1,2,3$
and
  \begin{align}
		\int_{0}^{t\wedge\tau}
		(\uu (r) , \nabla \phi )\,dr=0\quad
		\PP\mbox{-a.s.}\comma t\in [0, T]
		,
	\llabel{EQ123}
\end{align} 
which gives that $(u,\tau)$ is a solution of~\eqref{EQ01}. 

Now we establish~\eqref{EQ12}. Applying Lemma~\ref{L01} to \eqref{EQ120} componentwise, as in the proof of Lemma~\ref{L07}, and using~\eqref{EQ102}, we obtain that
\begin{align}
  \begin{split}
  &\EE\biggl[\sup_{0\leq t\leq T\wedge\tau}\Vert\uk(t,\cdot)\Vert_{H^{\frac12}}^2
  +\int_0^{T\wedge\tau} \Vert\uk(t,\cdot)\Vert_{H^{\frac{3}{2}}}^2  \,dt\biggr]
  \\&\indeq
  \leq C
  \EE\biggl[
  \Vert\uk_{0}\Vert_{H^{\frac12}}^2
  +
  (\bar\epsilon+\epsilon_{\sigma}^2)\int_0^{T\wedge\tau} 
  \Vert \uk(t,\cdot)\Vert_{H^{\frac{3}{2}}}^2
  \,dt
  \biggr]\comma k \in \mathbb{N}_0,
  \end{split}
  \label{EQ124}
  \end{align}
for some constant $C$ that is independent of~$T$. Therefore, we have
\begin{align}
\begin{split}
\EE\biggl[
\sup_{0\leq t\leq \tau}\Vert\uk(t)\Vert_{H^{\frac12}}^2
+\int_0^{\tau} \Vert \uk\Vert_{H^{\frac{3}{2}}}^2 \,dt
\biggr]
\leq 
C\epsilon_0^{2}\comma k \in \mathbb{N}_0,
\end{split}
\llabel{EQ125}
\end{align}
upon choosing $\bar\epsilon$ and $\epsilon_{\sigma}$ sufficiently
small and letting $T\to\infty$. Then, utilizing the convergence of
$\uk$ and the lower-semicontinuity of the Hilbert norm under the weak
convergence, we may pass to the limit in the above inequality and conclude~\eqref{EQ12}. Note that, in this situation, $\PP(\tau<\infty)\leq p_0$ for a small~$p_0$.

The uniqueness of the solution follows by repeating the proof of Lemma~\ref{L07} for the difference between $u$ and the other solution.

Even without $\epsilon_\sigma$ being small, the limit function $u$ solves~\eqref{EQ01} up to $\tau$, and $\PP(\tau<T)\lec_T p_0$ for a small~$p_0$. Furthermore, we can write
 \begin{align}
\begin{split}
&\EE\biggl[\sup_{0\leq t\leq T\wedge\tau}\Vert\uk(t)\Vert_{H^{\frac{1}{2}}}^2
+\int_0^{\tau\wedge T}  \Vert\uk(t)\Vert_{H^{\frac{3}{2}}}^2 \,dt\biggr]
\\&\indeq
\leq C
\EE\biggl[
\Vert\uk_{0}\Vert_{H^{\frac{1}{2}}}^2
+
\bar\epsilon\int_0^{\tau\wedge T} 
\Vert \uk\Vert_{H^{\frac{3}{2}}}^2
\,dt
+\epsilon_{\sigma}^2
\int_0^{T}
\Vert \uk\Vert_{H^{\frac{1}{2}}}^2
\,dt
\biggr] \comma k \in \mathbb{N}_0,
\end{split}
   \llabel{EQ126}
\end{align}	
and then use Gr\"onwall's lemma to obtain
 \begin{align}
\begin{split}
\EE\biggl[
\sup_{0\leq t\leq \tau\wedge T}\Vert\uk(t)\Vert_{H^{\frac{1}{2}}}^2
+\int_0^{\tau\wedge T} 
  \Vert\uk(t)\Vert_{H^{\frac{3}{2}}}^2\,dt
\biggr]
\leq 
C\epsilon_0^{2}\comma k \in \mathbb{N}_0,
\end{split}
   \llabel{EQ127}
\end{align}
where $C>0$ depends on~$T$. Since $\uk$ converges in $L^\infty(\Omega;L^\infty([0,\infty), H^\frac12) \cap L^2([0,\infty), H^{\frac32}))$, 
upon passing to the limit, we achieve~\eqref{EQ16}. 

For this case, we first establish the pathwise uniqueness on a sufficiently small interval $[0, \tau\wedge t_0]$, from where we extend it to $[0, \tau\wedge T]$, for $T>0$, in finitely many steps.

\section*{Acknowledgments}
\rm
MSA and IK were supported in part by the NSF grant DMS-2205493.

\ifnum\sketches=1

\fi

\end{document}